\theoremstyle{plain}
\newtheorem{thm}{Theorem}[section]
\newtheorem{lem}[thm]{Lemma}
\theoremstyle{definition}
\newtheorem{rem}[thm]{Remark}
\numberwithin{equation}{section}
\newcommand{\primalProblemTag}{\mathds{P}}
\newcommand{\dualProblemTag}{\mathds{D}}
\newcommand{\regProblemMYTag}{\mathds{D}^{\gamma}}
\newcommand{\discreteProblemMYTag}{\mathds{D}^{\gamma}_{h}}
\newcommand{\discretePwLinearProblemMYTag}{\mathds{D}^{\gamma}_{h,1}}
\newcommand{\primalProblem}{(\mathds{P})}
\newcommand{\dualProblem}{(\mathds{D})}
\newcommand{\regProblemMY}{(\mathds{D}^{\gamma})}
\newcommand{\discreteProblemMY}{(\mathds{D}^{\gamma}_{h})}
\newcommand{\discretePwLinearProblemMY}{(\mathds{D}^{\gamma}_{h,1})}
\newcommand{\slater}{s}
\newcommand{\rst}[1]{\ensuremath{{\mathbin\vert}\raise-.5ex\hbox{$#1$}}}
\newcommand{\dsN}{\ensuremath{\mathds{N}}}
\newcommand{\dsR}{\ensuremath{\mathds{R}}}
\newcommand{\defined}{\ensuremath{\;{\mathrel{\mathop:}=}\;}}
\newcommand{\defines}{\ensuremath{\;{=\mathrel{\mathop:}}\;}}
\newcommand{\ud}{\ensuremath{\,\mathrm{d}}}
\providecommand{\abs}[2][\relax]{%
        \ifx \relax #1 \relax % If #1 is missing or begins with \relax.
                \left \lvert #2 \right \rvert
        \else
                #1 \lvert #2 #1 \rvert
        \fi}
\providecommand{\norm}[2][\relax]{%
        \ifx \relax #1 \relax % If #1 is missing or begins with \relax.
                \left \lVert #2 \right \rVert
        \else
                #1 \lVert #2 #1 \rVert
        \fi}
\providecommand{\innerProd}[3][\relax]{%
        \ifx \relax #1 \relax % If #1 is missing or begins with \relax.
                \left ( #2 \, , \, #3 \right )
        \else
                #1 ( #2 \, , \, #3 #1 )
        \fi}
\providecommand{\dualPair}[3][\relax]{%
        \ifx \relax #1 \relax % If #1 is missing or begins with \relax.
                \left \langle #2 \, , \, #3 \right \rangle
        \else
                #1 \langle #2 \, , \, #3 #1 \rangle
        \fi}
\newcommand{\divergence}{\ensuremath{\operatorname{div}}}
\newcommand{\sgn}{\ensuremath{\operatorname{sgn}}}
\begin{document}
% --- title & key words & classification --------------------------------------
\title{An optimal shape design problem for plates}
\author{Klaus Deckelnick\thanks{Institut f\"ur Analysis und Numerik,
  Otto--von--Guericke--Universit\"at Magdeburg, Universit\"atsplatz 2,
  39106 Magdeburg, Germany (klaus.deckelnick@ovgu.de).}
 \and Michael Hinze\thanks{Schwerpunkt Optimierung und Approximation,
  Universit\"at Hamburg, Bundesstra\ss e 55, 20146 Hamburg, Germany
  (michael.hinze@uni-hamburg.de; tobias.jordan@uni-hamburg.de).}
 \and Tobias Jordan\footnotemark[2]}
\date{}
\maketitle
% --- text part ---------------------------------------------------------------

\begin{abstract}
 We consider an optimal shape design problem for the plate equation, 
 where the variable thickness of the plate is the design function.
 This problem can be formulated as a control in the coefficient PDE-constrained
 optimal control problem with additional control and state constraints.
 The state constraints are treated with a Moreau-Yosida regularization of a
 dual problem.
 Variational discretization is employed for discrete approximation of the
 optimal control problem.
 For discretization of the state in the mixed formulation we compare the
 standard continuous piecewise linear ansatz with a piecewise constant one
 based on the lowest-order Raviart-Thomas mixed finite element.
 We derive bounds for the discretization and regularization errors and also address
 the coupling of the regularization parameter and finite element grid size.
 The numerical solution of the optimal control problem is realized with a
 semismooth Newton algorithm.
 Numerical examples show the performance of the method.
%  %
%  % Old version:
%  %
%  An optimal shape design problem for the plate equation with control and state
%  constraints is considered. We derive necessary optimality conditions for a
%  dual problem. For discretization of the state systems the lowest-order
%  Raviart-Thomas mixed finite element is used, the controls remaining not
%  explicitly discretized, and a bound for the discretization error is given. We
%  employ a Moreau-Yosida-based regularization to deal with the state constraints
%  and obtain convergence towards the unregularized problem in function space.
%  In the discrete setting a partial result for the overall error and an
%  according optimal parameter coupling is derived.
%  The numerical solution of the optimal control problem is realized via a
%  semismooth Newton-type method.
%  %
%  % Older version:
%  %
%  An optimal shape design problem for the plate equation with control and state
%  constraints is considered. We derive necessary optimality conditions for a
%  dual problem. For discretization of the state systems the lowest-order
%  Raviart-Thomas mixed finite element is used, the controls remaining not
%  explicitly discretized, and a bound for the discretization error is given. We
%  employ a Moreau-Yosida-based regularization to deal with the state constraints
%  and obtain convergence towards the unregularized problem in function space.
%  In the discrete setting a partial result for the overall error and an
%  according optimal parameter coupling is derived.
%  The numerical solution of the optimal control problem is realized via a
%  semismooth Newton-type method.
\end{abstract}

\begin{description}
 \item[Key words.] elliptic optimal control problem, optimal shape design, pointwise state
  constraints, Moreau-Yosida regularization, error estimates.
 \item[Mathematics Subject Classification (2010).] 49J20, 65N12, 65N30.
\end{description}

\section{Introduction}
This work is devoted to the numerical analysis and solution of an optimal
control problem for a plate with variable thickness. The state
equation
\begin{align*}
	\Delta(u^3\Delta y) & = f  \qquad \text{in $\Omega$}
\end{align*}
can be used to model the relation between the (small) deflection~$y$ and the
thickness~$u$ of a (thin) plate under the force of a transverse load~$f$. The
domain~$\Omega \subset \dsR^{2}$ represents the unloaded plate's midplane and we
assume its boundary to be simply supported, i.e.,
\begin{align*}
		y = \Delta y = 0  \qquad \text{on $\partial\Omega$}.
\end{align*}
Invoking a pointwise lower bound on the state~$y$ and pointwise almost
everywhere box constraints on the control~$u$, and minimizing the volume of the
plate given by the cost functional
\begin{align*}
	\int_{\Omega}{u(x) \ud x}
\end{align*}
lead to a control in coefficients problem, which can also be viewed as an
optimal shape design problem. In~\cite{SprekelsTiba1999} this optimization
problem is analyzed using a transformation to a dual problem.

Building upon this duality, it is our aim to solve the control
problem with a finite element approximation that is suitable with regard to
the necessary optimality conditions. To this end we compare variational
discretization of the control problem (cf.~\cite{Hinze2005}) based on either
the lowest-order Raviart-Thomas mixed finite element or piecewise linear
continuous finite elements for the discretization of the Poisson equation.
The pointwise state constraints, which
are responsible for the low regularity of the Lagrange multiplier, are treated
with the help of Moreau-Yosida regularization
(cf.~\cite{HintermuellerKunisch2006_1}). The numerical solution to the control
problem is computed via a path-following algorithm that simultaneously refines
the mesh and follows the homotopy generated by the regularization parameter.
The resulting subproblems are solved by a semismooth Newton method.

To the best of the authors' knowledge this is the first contribution to numerical
analysis of a ``control in the coefficients'' problem for biharmonic equations
including state constraints. The mathematical techniques applied in the numerical
analysis of the regularized control problem are related to the relaxation of
state constraints as proposed
in~\cite{HintermuellerHinze2009} and to~\cite{DeckelnickGuentherHinze2009}, where
the Raviart-Thomas mixed finite element was employed in the context of gradient
constraints.

The present work is organized as follows. In Section~\ref{sec:probs} the
optimal control problem and its dual problem are introduced. The regularization
of the dual problem is investigated in Section~\ref{sec:reg}.
Section~\ref{sec:discr} deals with the discretization of the
regularized problems and with the related error bounds. Finally, in
Section~\ref{sec:num} the original control problem is solved with a Newton-type
path-following method. Numerical examples are presented which validate our
analytical findings.
\section{The optimization problems} \label{sec:probs}
Let~$\Omega \subset \dsR^{d}$, $d \in \{2,3\}$, be a bounded domain with smooth
boundary~$\partial \Omega$.
The Dirichlet problem for the Poisson equation
\begin{align} \label{eq:dirichlet_diff}
 \begin{split}
  -\Delta y & = g \quad \text{in $\Omega$,} \\
  y         & = 0 \quad \text{on $\partial \Omega$}
 \end{split}
\end{align}
admits for every~$g \in L^{2}(\Omega)$ a unique solution~$y \defined T(g) \in V
\defined H^{2}(\Omega) \cap H^{1}_{0}(\Omega)$ satisfying
\begin{equation} \label{eq:dirichlet_strong_estimate}
 \norm{y}_{H^{2}(\Omega)} \leq C \norm{g}_{L^{2}(\Omega)}.
\end{equation}

% -----------------------------------------------------------------------------
In order to define the control problems considered in this paper we introduce
the admissible sets for controls and states according to
\begin{align*}
 U_{\mathrm{ad}} \defined & \Set{u \in L^{\infty}(\Omega) |
  m \leq u \leq M \quad \text{a.e.\ in $\Omega$} \vphantom{M^{-3}}}, \\
 L_{\mathrm{ad}} \defined & \Set{l \in L^{\infty}(\Omega) |
  M^{-3} \leq l \leq m^{-3} \quad \text{a.e.\ in $\Omega$}}, \\
 Y_{\mathrm{ad}} \defined & \Set{y \in C(\bar{\Omega}) | y \geq - \tau \quad
  \text{in $\Omega$} \vphantom{M^{-3}}},
\end{align*}
where~$\tau > 0$ and~$0< m < M$ are positive real constants. For a given~$f \in
L^{2}(\Omega)$ we consider the following optimal control problems
(cf.~\cite{SprekelsTiba1999}, problems~$\mathbf{P}_{1}$ and~$\mathbf{D}_{1}$):
% -----------------------------------------------------------------------------
\begin{align}
 & \min_{u \in L^{\infty}(\Omega)} \tilde{J}(u) \defined \int_{\Omega}{u \ud x}
 \tag{$\primalProblemTag$}
\end{align}
subject to
\begin{align}
 &&&&&& \Delta(u^3\Delta y) & = f        && \text{in $\Omega$}, &&&&&
  \label{def:primal_system_pde} \\
 &&&&&& y                   & = 0        && \text{on $\partial\Omega$},
  \label{def:primal_system_state_bd} \\
 &&&&&& -\Delta y           & = 0        && \text{on $\partial\Omega$},
  \label{def:primal_system_laplacian_bd} \\
 &&&&&& u                   & \in U_{\mathrm{ad}}, &&
  \nonumber \\
 &&&&&& y                   & \in Y_{\mathrm{ad}},
  \nonumber
\end{align}
denoted as the primal problem~$\primalProblem$, representing the physical
control problem motivated in the introduction, and secondly, with the
datum~$z \in V$ induced by~$f$
\begin{align*}
  -\Delta z & = f \quad \text{in $\Omega$,} \\
  y         & = 0 \quad \text{on $\partial \Omega$},
\end{align*}
the dual problem~$\dualProblem$
% -----------------------------------------------------------------------------
\begin{align}
 \min_{l \in L^{\infty}(\Omega)} J(l) \defined \int_{\Omega}{l^{-1/3} \ud x}
 \tag{$\dualProblemTag$}
\end{align}
subject to
\begin{align}
 &&&&&& -\Delta y     & = z\,l      && \text{in $\Omega$}, &&&&&
  \label{def:dual_system_state_eq} \\
 &&&&&& y             & = 0         && \text{on $\partial\Omega$},
  \label{def:dual_system_state_bd} \\
 &&&&&& l             & \in L_{\mathrm{ad}}, &&
  \nonumber \\ 
&&&&&& y             & \in Y_{\mathrm{ad}},
  \nonumber
\end{align}
which is analytically and numerically advantageous, in that it is convex
and contains two coupled second order equations instead of the fourth order
equation~\eqref{def:primal_system_pde}. It will therefore serve as a basis for
our analysis in the remaining sections.
For every~$u \in U_{\mathrm{ad}}$ the
system~\eqref{def:primal_system_pde}--\eqref{def:primal_system_laplacian_bd}
has a unique weak solution~$y = y(u) \in V$ with~$u^{3} \Delta y \in V$.
Due to~$z \in L^{2}(\Omega)$
we have~$z\,l \in L^{2}(\Omega)$, and there is a strong
solution~$y = y(l) \in V$ to the dual
system~\eqref{def:dual_system_state_eq}--\eqref{def:dual_system_state_bd}.

% -----------------------------------------------------------------------------
We impose the following Slater condition:
\begin{align} \label{assum:slater}
 \exists u_{\slater} \in U_{\mathrm{ad}}, \varepsilon_{\slater} > 0 \colon \quad
 y_{\slater} = y(u_{\slater}) > -\tau + \varepsilon_{\slater},
\end{align}
and recall from~\cite{SprekelsTiba1999} that for each pair~$(y,u)$ admissible
for~$\primalProblem$, the pair~$(y,l = u^{-3})$ is admissible
for~$\dualProblem$ with the same cost and vice versa. Moreover,
there exists a unique solution~$u_{\text{opt}}$ of problem~$\primalProblem$,
and $l_{\text{opt}} = u_{\text{opt}}^{-3}$ is the unique solution
of~$\dualProblem$. We denote the associated state by~$y_{\text{opt}}$.

% -----------------------------------------------------------------------------
Next we derive optimality conditions characterizing~$l_{\text{opt}}$.
For this purpose we introduce~$\mathscr{M}(\bar{\Omega})$, the space of
regular Borel measures, which equipped with norm
\begin{align*}
 \norm{\cdot}_{\mathscr{M}(\bar{\Omega})} = \sup_{f \in C(\bar{\Omega}),
 \abs{f} \leq 1} \int_{\bar{\Omega}} f \ud (\cdot)
\end{align*}
is the dual space of~$C(\bar{\Omega})$.
Arguments similar to those used in~\cite[Theorem 2]{Casas1986} yield the
\begin{thm} \label{thm:optimality_conditions}
 Let the assumption~\eqref{assum:slater} hold.
 A control~$l \in L^{\infty}(\Omega)$
 with associated state~$y = y(l)$ is optimal for the dual
 problem~$\dualProblem$ if and only if there exist~$q \in L^2(\Omega)$
 and~$\nu \in \mathscr{M}(\bar{\Omega})$, such that
 \begin{align}
  &&&&&& -\int_{\Omega} q\, \Delta w \ud x &= \int_{\Omega} w \ud \nu(x)
   && \forall \, w \in V, \nonumber &&&&& \\
  &&&&&& \int_{\Omega} \left( q\, z - \frac{1}{3}\, l^{-4/3} \right) (k - l) \ud x
   & \geq 0 && \forall \, k \in L_{\mathrm{ad}}, \label{os:var_ineq} &&&&& \\
  &&&&&& \int_{\bar{\Omega}} (w - y) \ud \nu(x)	& \leq 0 && \forall \, w \in Y_{\mathrm{ad}},
   \label{os:compl_ineq} &&&&& \\
  &&&&&& l \in L_{\mathrm{ad}},& \qquad y \in Y_{\mathrm{ad}}
  \nonumber
 \end{align}
 are satisfied. 
\end{thm}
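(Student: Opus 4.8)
The plan is to use that $\dualProblem$ is a convex program and to treat the pointwise bound $y\ge-\tau$ as a cone constraint in $C(\bar{\Omega})$, for which a Lagrange multiplier exists as a regular Borel measure. Writing $y(l)=T(z\,l)$, the estimate \eqref{eq:dirichlet_strong_estimate} together with the Sobolev embedding $V=H^{2}(\Omega)\cap H^{1}_{0}(\Omega)\hookrightarrow C(\bar{\Omega})$ (valid since $d\le 3$) shows that $l\mapsto y(l)$ is a bounded linear map from $L^{\infty}(\Omega)$ into $C(\bar{\Omega})$. Since $l\mapsto\int_{\Omega}l^{-1/3}\ud x$ is convex on $L_{\mathrm{ad}}$ (where $l\ge M^{-3}>0$) and $L_{\mathrm{ad}},Y_{\mathrm{ad}}$ are convex, the stated conditions will be both necessary and sufficient for optimality.

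Sufficiency is the more direct implication and I would prove it first. Let $k\in L_{\mathrm{ad}}$ be arbitrary with $y(k)\in Y_{\mathrm{ad}}$. Using convexity of $J$, then \eqref{os:var_ineq}, then the adjoint equation tested with $w=y(k)-y(l)\in V$ (for which $-\Delta(y(k)-y(l))=z(k-l)$), and finally \eqref{os:compl_ineq} with $w=y(k)$, one obtains
\begin{align*}
 J(k)-J(l)
  &\ge \int_{\Omega}\Bigl(-\tfrac{1}{3}\,l^{-4/3}\Bigr)(k-l)\ud x
   \ge -\int_{\Omega} q\,z\,(k-l)\ud x \\
  &= -\int_{\bar{\Omega}}\bigl(y(k)-y(l)\bigr)\ud\nu(x)\ge 0 ,
\end{align*}
so $l$ is optimal.

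For necessity I would argue as in \cite[Theorem 2]{Casas1986}. Rewrite the state constraint as $y(l)+\tau\in K$, where $K\defined\Set{v\in C(\bar{\Omega}) | v\ge 0}$ has nonempty interior in $C(\bar{\Omega})$. The Slater condition \eqref{assum:slater}, transferred to $\dualProblem$ via the equivalence $(y,u)\leftrightarrow(y,u^{-3})$, furnishes $l_{\slater}\defined u_{\slater}^{-3}\in L_{\mathrm{ad}}$ with $y(l_{\slater})>-\tau+\varepsilon_{\slater}$, i.e. a feasible control whose state lies in the interior of $K$. This is precisely the constraint qualification for the generalized Karush–Kuhn–Tucker theorem for convex programs with cone constraints, which then yields a multiplier $\mu$ in the dual cone $\Set{\mu\in\mathscr{M}(\bar{\Omega}) | \mu\ge 0}$ such that $l$ minimizes the Lagrangian $\mathcal{L}(l,\mu)\defined\int_{\Omega}l^{-1/3}\ud x-\int_{\bar{\Omega}}(y(l)+\tau)\ud\mu(x)$ over $L_{\mathrm{ad}}$, together with the complementary slackness $\int_{\bar{\Omega}}(y(l)+\tau)\ud\mu=0$. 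Setting $\nu\defined-\mu$, the variational inequality for the minimization of $\mathcal{L}$ over $L_{\mathrm{ad}}$ becomes, after the adjoint identification of the next step, \eqref{os:var_ineq}, while $\mu\ge 0$ and the slackness translate into \eqref{os:compl_ineq} (test with $w=y(l)+\phi$, $\phi\ge 0$, to recover $\nu\le 0$, and with $w\equiv-\tau$ to recover the slackness).

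It remains to construct $q$ and to identify the product $q\,z$ in \eqref{os:var_ineq}. Given $\nu\in\mathscr{M}(\bar{\Omega})$, the functional $w\mapsto\int_{\bar{\Omega}}w\ud\nu$ is bounded on $C(\bar{\Omega})$. Since $-\Delta\colon V\to L^{2}(\Omega)$ is an isomorphism, writing $g=-\Delta w$ and using \eqref{eq:dirichlet_strong_estimate} gives $\abs{\int_{\bar{\Omega}}(Tg)\ud\nu}\le\norm{Tg}_{C(\bar{\Omega})}\norm{\nu}_{\mathscr{M}(\bar{\Omega})}\le C\norm{g}_{L^{2}(\Omega)}\norm{\nu}_{\mathscr{M}(\bar{\Omega})}$, so $g\mapsto\int_{\bar{\Omega}}(Tg)\ud\nu$ is bounded on $L^{2}(\Omega)$. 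By the Riesz representation theorem there exists $q\in L^{2}(\Omega)$ with $-\int_{\Omega}q\,\Delta w\ud x=\int_{\bar{\Omega}}w\ud\nu$ for all $w\in V$, which is the adjoint equation; testing it with $w=y(k)-y(l)$ then identifies the adjoint of the control-to-state map with multiplication by $q\,z\in L^{2}(\Omega)$ and reproduces the term $q\,z$ in \eqref{os:var_ineq}. The main obstacle is exactly this existence step: for pointwise state constraints the multiplier can only be a measure, and it is the interplay of posing the constraint in $C(\bar{\Omega})$ (nonempty interior of the positive cone), the Slater condition, and the embedding $V\hookrightarrow C(\bar{\Omega})$ with \eqref{eq:dirichlet_strong_estimate} that simultaneously secures the multiplier $\nu$ and the square-integrability of $q$.
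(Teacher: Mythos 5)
Your proposal is correct and follows essentially the route the paper intends: the paper gives no written proof but defers to the Lagrange-multiplier argument of Casas for convex problems with a cone constraint in $C(\bar{\Omega})$ under the Slater condition, which is exactly what you carry out, including the $L^{2}$-construction of $q$ via \eqref{eq:dirichlet_strong_estimate} and the embedding $V\hookrightarrow C(\bar{\Omega})$, and the direct convexity argument for sufficiency.
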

The variational inequality~\eqref{os:var_ineq} can be written as a
projection formula
\begin{equation} \label{os:proj}
 l = \left( P_{\left[m^{4},M^{4}\right]} \left( 3\,q\,z \right)
  \right)^{-3/4} \quad \mbox{a.e.\ in $\Omega$},
\end{equation}
where~$P_{[a,b]}$ is the orthogonal projection onto the real interval~${[a,b]}$.
For later use, we note that~\eqref{os:compl_ineq} is equivalent to
\begin{equation} \label{os:compl}
 \nu \leq 0, \quad	\int_{\bar{\Omega}} (y + \tau) \ud \nu(x) = 0.
\end{equation}
It follows from~\eqref{os:compl} that the support of the measure~$\nu$ is
concentrated in the state-active set~$\Set{ x \in \Omega | y(x) = -\tau}$.
In particular~$\nu_{|\partial \Omega} \equiv 0$, see~\cite{Casas1986}. Furthermore,
from Theorems~4 and~5 in~\cite{Casas1986} we deduce~$q \in W^{1,s}_{0}(\Omega)$
for all~$1 \leq s < d/(d-1)$ and~$q \in H^{2}_{\text{loc}}(\Omega \setminus
\{y = -\tau\})$.
\section{Moreau-Yosida regularized problem} \label{sec:reg}
To relax the state constraints, we introduce the Moreau-Yosida
regularization~$\regProblemMY$ of
problem~$\dualProblem$ for a parameter~$\gamma > 0$. It reads
\begin{align}
  \min_{l \in L_{\text{ad}}} J^{\gamma}(l)
  \defined \int_{\Omega}{l^{-1/3} \ud x} + \frac{\gamma}{2} \int_{\Omega}
  \big( (y + \tau)^{-} \big)^{2} \ud x
  \tag{$\regProblemMYTag$}
\end{align}
subject to
\begin{align*}
 y & = T(z\,l).
\end{align*}
Here we set~$(\cdot)^{-} \defined \min\{0 , (\cdot)\}$.
% -----------------------------------------------------------------------------
It admits a unique solution~$l^{\gamma}$ of $\regProblemMY$
with associated state denoted by~$y^{\gamma} = T(z \, l^{\gamma})$. Furthermore,
there exists a unique~$q^{\gamma} \in L^2(\Omega)$ which together
with~$y^{\gamma}$ and~$l^{\gamma}$ satisfies
\begin{align}
 &&&& -\int_{\Omega} q^{\gamma} \, \Delta w \ud x  & =
  \int_{\Omega} \gamma (y^{\gamma} + \tau)^{-} \, w \ud x
  && \forall \, w \in V, &&&&&
  \label{os_my:adj_eq_weak} \\
 &&&& \int_{\Omega} \left( q^{\gamma}\, z - \frac{1}{3}\, (l^{\gamma})^{-4/3} \right)
  (k - l^{\gamma}) \ud x & \geq 0 && \forall \, k \in L_{\text{ad}},
  \label{os_my:var_ineq} \\
 &&&& l^{\gamma}                 & \in L_{\text{ad}}.
  \nonumber
\end{align}
% -----------------------------------------------------------------------------
The term~$\nu^{\gamma} \defined \gamma (y^{\gamma} + \tau)^{-}$ can be regarded
as a regularized version of the Lagrange multiplier~$\nu \in
\mathscr{M}(\bar{\Omega})$ in Theorem~\ref{thm:optimality_conditions}.
There holds~$q^{\gamma} = T(\nu^{\gamma}) \in V$ and
\begin{align*}% \label{os_my:compl}
 \dualPair{y^{\gamma} - w}{\nu^{\gamma}}_{C(\bar{\Omega}),
 \mathscr{M}(\bar{\Omega})} \geq 0 \qquad \forall \, w \in Y_{\text{ad}}.
\end{align*}
This inequality for~$w \in Y_{\text{ad}}$ can be argued as follows:
\begin{align*}
 \dualPair{y^{\gamma} - w}{\nu^{\gamma}}_{C(\bar{\Omega}),
 M(\bar{\Omega})}
 & = \innerProd{y^{\gamma} - w}{\nu^{\gamma}}_{L^{2}(\Omega)}
 = \innerProd{y^{\gamma} + \tau - \tau - w}{\nu^{\gamma}}_{L^{2}(\Omega)} \\
 & = \innerProd{y^{\gamma} + \tau}{\gamma(y^{\gamma} + \tau)^{-}}_{L^{2}(\Omega)} +
 \innerProd{-(w + \tau)}{\nu^{\gamma}}_{L^{2}(\Omega)} \\
 & = \gamma \norm{(y^{\gamma} + \tau)^{-}}_{L^{2}(\Omega)}^{2} + 
 \innerProd{-(w + \tau)}{\nu^{\gamma}}_{L^{2}(\Omega)} \geq 0.
\end{align*}
% -----------------------------------------------------------------------------

Now we want to show convergence of the parameterized subproblems~$\regProblemMY$
towards the unregularized problem~$\dualProblem$. We begin with
uniform boundedness of primal and dual variables with respect to~$\gamma$.
While the former is obtained immediately through the control constraints
and~\eqref{eq:dirichlet_strong_estimate}, the latter can be shown as follows.
\begin{lem} \label{lem:my_bounds}
 Let~$\gamma > 0$ and~$l^{\gamma} \in L_{\mathrm{ad}}$ be the solution to
 the problem~$\regProblemMY$ with associated state~$y^{\gamma}$ and
 multipliers~$q^{\gamma}, \nu^{\gamma}$ according to the optimality conditions.
 Then there exists a constant~$C > 0$ independent of~$\gamma$ such that
 \begin{align*}
  \norm{\nu^{\gamma}}_{L^{1}(\Omega)}, \quad
  \norm{q^{\gamma}}_{L^{2}(\Omega)} \quad \leq \quad C.
 \end{align*}
\end{lem}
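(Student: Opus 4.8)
The plan is to first bound $\norm{\nu^{\gamma}}_{L^{1}(\Omega)}$ using the Slater condition~\eqref{assum:slater}, and then to deduce the bound on $\norm{q^{\gamma}}_{L^{2}(\Omega)}$ from the adjoint equation~\eqref{os_my:adj_eq_weak} by a duality argument. The starting point for the first step is the observation that $\nu^{\gamma} = \gamma(y^{\gamma}+\tau)^{-} \leq 0$, so that $\norm{\nu^{\gamma}}_{L^{1}(\Omega)} = -\int_{\Omega} \nu^{\gamma} \ud x$. Writing $y_{\slater} = y(u_{\slater})$ for the Slater state and $l_{\slater} \defined u_{\slater}^{-3} \in L_{\mathrm{ad}}$ for the corresponding dual control, the condition~\eqref{assum:slater} gives $y_{\slater} + \tau \geq \varepsilon_{\slater} > 0$ pointwise. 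Since $-\nu^{\gamma} \geq 0$, this yields the weighted estimate
\begin{equation*}
 \varepsilon_{\slater} \norm{\nu^{\gamma}}_{L^{1}(\Omega)} \leq -\int_{\Omega} (y_{\slater} + \tau)\, \nu^{\gamma} \ud x,
\end{equation*}
so it suffices to bound the right-hand side uniformly in $\gamma$.

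To this end I would split $y_{\slater} + \tau = (y_{\slater} - y^{\gamma}) + (y^{\gamma} + \tau)$. Because $\int_{\Omega} (y^{\gamma} + \tau)\nu^{\gamma} \ud x = \gamma \norm{(y^{\gamma}+\tau)^{-}}_{L^{2}(\Omega)}^{2} \geq 0$, the second contribution has a favourable sign and can be discarded, leaving
\begin{equation*}
 -\int_{\Omega} (y_{\slater} + \tau)\, \nu^{\gamma} \ud x \leq \int_{\Omega} (y^{\gamma} - y_{\slater})\, \nu^{\gamma} \ud x.
\end{equation*}
The key step is to rewrite the right-hand side via the adjoint equation~\eqref{os_my:adj_eq_weak}: testing with $w = y^{\gamma} - y_{\slater} \in V$ and using $-\Delta y^{\gamma} = z\,l^{\gamma}$, $-\Delta y_{\slater} = z\,l_{\slater}$ gives
\begin{equation*}
 \int_{\Omega} (y^{\gamma} - y_{\slater})\, \nu^{\gamma} \ud x = \int_{\Omega} q^{\gamma} z\,(l^{\gamma} - l_{\slater}) \ud x.
\end{equation*}
Now the variational inequality~\eqref{os_my:var_ineq} with the admissible choice $k = l_{\slater}$ bounds this by $\tfrac{1}{3}\int_{\Omega} (l^{\gamma})^{-4/3}(l^{\gamma} - l_{\slater})\ud x$, and the last integrand is controlled purely by the box constraints: $m^{4} \leq (l^{\gamma})^{-4/3} \leq M^{4}$ and $\abs{l^{\gamma} - l_{\slater}} \leq m^{-3} - M^{-3}$, so it is bounded by a constant times $\abs{\Omega}$, independently of $\gamma$. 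Chaining these inequalities gives $\norm{\nu^{\gamma}}_{L^{1}(\Omega)} \leq C/\varepsilon_{\slater}$.

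For the bound on $q^{\gamma}$ I would argue by duality rather than invoking $L^{1}$-regularity theory directly. Given $\phi \in L^{2}(\Omega)$, let $\psi = T(\phi) \in V$; by the regularity estimate~\eqref{eq:dirichlet_strong_estimate} and the embedding $H^{2}(\Omega) \hookrightarrow C(\bar{\Omega})$ (valid for $d \leq 3$) we have $\norm{\psi}_{C(\bar{\Omega})} \leq C \norm{\phi}_{L^{2}(\Omega)}$. Testing~\eqref{os_my:adj_eq_weak} with $w = \psi$ gives
\begin{equation*}
 \int_{\Omega} q^{\gamma} \phi \ud x = -\int_{\Omega} q^{\gamma} \Delta \psi \ud x = \int_{\Omega} \nu^{\gamma} \psi \ud x \leq \norm{\nu^{\gamma}}_{L^{1}(\Omega)} \norm{\psi}_{C(\bar{\Omega})} \leq C \norm{\nu^{\gamma}}_{L^{1}(\Omega)} \norm{\phi}_{L^{2}(\Omega)},
\end{equation*}
and taking the supremum over $\norm{\phi}_{L^{2}(\Omega)} = 1$ yields $\norm{q^{\gamma}}_{L^{2}(\Omega)} \leq C \norm{\nu^{\gamma}}_{L^{1}(\Omega)} \leq C$. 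I expect the main obstacle to be the first step: the complementarity-type inequality $\dualPair{y^{\gamma} - w}{\nu^{\gamma}} \geq 0$ established above points in the unhelpful direction, and the crux is to realise that an \emph{upper} bound on $\int_{\Omega} (y^{\gamma} - y_{\slater})\nu^{\gamma} \ud x$ must instead be extracted from the control variational inequality, where the box constraints on $l$ render everything uniformly bounded; the Slater gap $\varepsilon_{\slater}$ is then exactly what converts this weighted estimate into a genuine $L^{1}$ bound.
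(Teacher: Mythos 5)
Your proof is correct and takes essentially the same route as the paper's: both bound $\norm{\nu^{\gamma}}_{L^{1}(\Omega)}$ by combining the Slater gap, the sign of $\gamma\norm{(y^{\gamma}+\tau)^{-}}_{L^{2}(\Omega)}^{2}$, the adjoint equation tested against $y^{\gamma}-y_{\slater}$, and the variational inequality with $k=l_{\slater}$ together with the box constraints; you merely chain the inequalities in the opposite order. For the $q^{\gamma}$ bound the paper takes your duality argument with the particular choice $\phi=q^{\gamma}$, which is an immaterial difference.
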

\begin{proof}
To uniformly bound~$\nu^{\gamma}$ in~$L^{1}(\Omega)$ we 
test~\eqref{os_my:var_ineq} with the Slater element~$l_{\slater}$. With the
help of the adjoint equation~\eqref{os_my:adj_eq_weak} we get
 \begin{align*}
  C \geq \innerProd{\frac{1}{3}(l^{\gamma})^{-4/3}}{l^{\gamma} -
  l_{\slater}}_{L^{2}(\Omega)} \geq \innerProd{q^{\gamma}}{z
  (l^{\gamma} - l_{\slater}}_{L^{2}(\Omega)}
  = \innerProd{\nu^{\gamma}}{y^{\gamma} - y_{\slater}}_{L^{2}(\Omega)},
 \end{align*}
 with a constant~$C$ independent of~$\gamma$. The desired estimate now
 follows from
 \begin{align*}
  \innerProd{\nu^{\gamma}}{y^{\gamma} - y_{\slater}}_{L^{2}(\Omega)}
  & = \innerProd{\nu^{\gamma}}{y^{\gamma} + \tau - \tau
  - y_{\slater}}_{L^{2}(\Omega)}\\
  & = \gamma \norm{(y^{\gamma} + \tau)^{-}}_{L^{2}(\Omega)}^{2}
  + \innerProd{-\nu^{\gamma}}{y_{\slater} + \tau}_{L^{2}(\Omega)} \\
  & \geq 0 + \innerProd{-\nu^{\gamma}}{\varepsilon_{\slater}}_{L^{2}(\Omega)} \\
  & = \varepsilon_{\slater} \norm{\nu^{\gamma}}_{L^{1}(\Omega)}.
 \end{align*}
With this bound we want to prove the one on the dual state~$q^{\gamma}$.
Let~$w \in V$ solve
\begin{align*}
  -\Delta w & = q^{\gamma} \quad \text{in $\Omega$,} \\
  w         & = 0          \quad \text{on $\partial \Omega$},
\end{align*}
Using~\eqref{os_my:adj_eq_weak}, the embedding of~$C(\bar{\Omega})$
into~$H^{2}(\Omega)$ and the continuous dependence of~$w$ on~$q^{\gamma}$,
we have that
\begin{align*}
 \norm{q^{\gamma}}_{L^2(\Omega)}^2 &= \int_{\Omega} q^{\gamma} (-\Delta w) \ud x
 = \int_{\Omega} \nu^{\gamma} w \ud x \leq C \norm{\nu^{\gamma}}_{L^1(\Omega)}
 \norm{w}_{L^{\infty}(\Omega)} \\
 &\leq C \norm{\nu^{\gamma}}_{L^1(\Omega)} \norm{w}_{H^{2}(\Omega)}
 \leq C \norm{q^{\gamma}}_{L^2(\Omega)},
\end{align*}
hence~$\norm{q^{\gamma}}_{L^2(\Omega)} \leq C$.
\end{proof}
Next we need to estimate the
violation of the state constraint measured in the maximum norm with the help of
techniques developed in~\cite{HintermuellerSchielaWollner2014}.
% -----------------------------------------------------------------------------
\begin{lem} \label{lem:violation_state_constr_order}
  Let~$l^{\gamma}$ be the solution of problem~$\regProblemMY$, $y^{\gamma}$
  the corresponding state. Then for~$d \in \{2,3\}$ we have for
  every~$\varepsilon > 0$ a constant~$C_{\varepsilon} > 0$, independent of~$\gamma$, such that
  \begin{align*}
    \norm{(y^{\gamma} + \tau)^{-}}_{L^{\infty}(\Omega)}
    \leq C_{\varepsilon} \left(\gamma^{d/4 - 1 + \varepsilon} \right).
  \end{align*}
\end{lem}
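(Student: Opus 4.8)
The plan is to reduce the statement to a quantitative localisation estimate for the constraint violation $v^{\gamma} \defined (y^{\gamma}+\tau)^{-}$, balancing a weak (integral) smallness of $v^{\gamma}$ in $\gamma$ against a pointwise regularity of the state that is \emph{uniform} in $\gamma$. Two ingredients enter. First, the smallness: Lemma~\ref{lem:my_bounds} gives $\norm{\nu^{\gamma}}_{L^{1}(\Omega)} \leq C$, and since $\nu^{\gamma} = \gamma\,(y^{\gamma}+\tau)^{-}$ this reads $\norm{v^{\gamma}}_{L^{1}(\Omega)} \leq C\gamma^{-1}$ (comparing $J^{\gamma}(l^{\gamma})$ with $J^{\gamma}(l_{\slater})$, whose state is strictly feasible and hence carries no penalty, additionally yields $\norm{v^{\gamma}}_{L^{2}(\Omega)} \leq C\gamma^{-1/2}$). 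Second, the regularity: because $z \in V \hookrightarrow L^{\infty}(\Omega)$ for $d\le 3$ and $l^{\gamma} \in L_{\mathrm{ad}}$ is bounded, the datum $z\,l^{\gamma}$ is bounded in $L^{\infty}(\Omega)$ uniformly in $\gamma$, so elliptic $W^{2,p}$-regularity for $y^{\gamma}=T(z\,l^{\gamma})$ gives $\norm{y^{\gamma}}_{W^{2,p}(\Omega)} \leq C_{p}$ for every $p<\infty$, hence $\norm{y^{\gamma}}_{C^{1,\beta}(\bar{\Omega})} \leq C_{\beta}$ for every $\beta<1$, all uniform in $\gamma$.

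Next I would localise around a point of maximal violation. Since $y^{\gamma}=0 > -\tau$ on $\partial\Omega$, the continuous function $v^{\gamma}$ vanishes on the boundary, so $V \defined \norm{v^{\gamma}}_{L^{\infty}(\Omega)}$ is attained at an \emph{interior} point $x_{0}$, where $y^{\gamma}$ has an interior minimum and therefore $\nabla y^{\gamma}(x_{0}) = 0$. Combining this with the uniform $C^{1,\beta}$-bound, a Taylor estimate of order $s = 1+\beta \in (1,2]$ gives $y^{\gamma}(x) \leq y^{\gamma}(x_{0}) + C_{s}\abs{x-x_{0}}^{s}$, whence $v^{\gamma}(x) \geq V - C_{s}\abs{x-x_{0}}^{s} \geq V/2$ on a ball $B(x_{0},\rho)$ with $\rho \sim (V/C_{s})^{1/s}$; the smoothness of $\partial\Omega$ ensures that a fixed fraction of this ball lies in $\Omega$. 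Integrating the lower bound $v^{\gamma}\geq V/2$ over $B(x_{0},\rho)\cap\Omega$ against $\norm{v^{\gamma}}_{L^{1}(\Omega)} \leq C\gamma^{-1}$ produces a relation of the form
\[
  V^{\,1 + d/s} \;\leq\; C_{s}\,\gamma^{-1}.
\]

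It remains to choose the localisation order. Taking $s = 4-d$ (so $s=1$ for $d=3$, $s=2$ for $d=2$) one has $1 + d/s = 4/(4-d)$, and the displayed inequality gives exactly $V \leq (C\gamma^{-1})^{(4-d)/4} = C\,\gamma^{d/4-1}$. For $d=3$ this uses only the Lipschitz bound ($s=1$) and no loss occurs. For $d=2$ the required order is $s=2$, i.e.\ $\beta = 1$, which is \emph{not} available uniformly; using $s = 1+\beta$ with $\beta \nearrow 1$ yields $V \leq C_{\beta}\,\gamma^{-(1+\beta)/(3+\beta)} \to C\,\gamma^{-1/2}$, and the blow-up of the $W^{2,p}$-constant as $p = d/(1-\beta) \to \infty$ is absorbed into the factor $\gamma^{\varepsilon}$. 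Optimising this interpolation in the spirit of~\cite{HintermuellerSchielaWollner2014} then gives the asserted bound $\norm{(y^{\gamma}+\tau)^{-}}_{L^{\infty}(\Omega)} \leq C_{\varepsilon}\,\gamma^{d/4-1+\varepsilon}$.

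The main obstacle is that $v^{\gamma}$ itself is \emph{not} in $H^{2}$ (its second derivatives concentrate, as a singular measure, on the free boundary $\{y^{\gamma}=-\tau\}$), so one cannot apply a clean Gagliardo--Nirenberg $L^{2}$--$H^{2}$ interpolation directly to the violation. The resolution is exactly the localisation above: the second-order information must be carried by the \emph{smooth} state $y^{\gamma}$, exploiting that its gradient vanishes at the interior minimiser, and the borderline Sobolev embedding $W^{2,p}(\Omega)\hookrightarrow C^{1,\beta}(\bar{\Omega})$ (for $d=2$) is what forces the arbitrarily small loss $\varepsilon>0$. The remaining technical points -- tracking the $p$-dependence of $C_{p}$ to extract $\gamma^{\varepsilon}$, and ensuring the ball $B(x_{0},\rho)$ meets $\Omega$ in a set of measure $\gtrsim \rho^{d}$ uniformly in $x_{0}$ via the cone condition -- are routine given the smoothness of $\partial\Omega$.
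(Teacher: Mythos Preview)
Your argument is correct and reaches the claimed rate, but the route differs from the paper's. The paper does not stop at $W^{2,p}$-regularity of $y^{\gamma}$; instead it bootstraps through the projection formula $l^{\gamma} = (P_{[m^4,M^4]}(3q^{\gamma}z))^{-3/4}$: since $\nu^{\gamma}$ is uniformly bounded in $L^{1}$, the adjoint $q^{\gamma}$ is uniformly in $W^{1,s}$ for every $s < d/(d-1)$, which transfers to $l^{\gamma}$ and then to $zl^{\gamma}$, so that $y^{\gamma}$ lies uniformly in $W^{3,s}$ and hence in $C^{\beta}(\bar{\Omega})$ with $\beta = 4-d-\varepsilon$. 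With this H\"older exponent in hand the paper simply invokes Corollary~2.6 of~\cite{HintermuellerSchielaWollner2014}. You, by contrast, use only the bare $W^{2,p}$ bound coming from $zl^{\gamma} \in L^{\infty}$ and recover the missing order by exploiting that $\nabla y^{\gamma}$ vanishes at the interior minimiser, so your local expansion has effective exponent $s = 1+\beta$ rather than $\beta$. The paper's approach buys genuinely higher regularity of $y^{\gamma}$ at the cost of relying on the optimality structure and on regularity theory for the Poisson problem with $L^{1}$ data; yours is more elementary and self-contained, and for $d=3$ in fact avoids the $\varepsilon$-loss since the plain Lipschitz bound ($s=1$) already suffices. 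Both routes lead to the same localisation inequality $V^{1+d/s} \leq C\gamma^{-1}$ with $s$ arbitrarily close to $4-d$.
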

\begin{proof}
 We show that Corollary~2.6 of~\cite{HintermuellerSchielaWollner2014} is
 applicable. To begin with, we note that~$l^{\gamma} = P(3 q^{\gamma} z)^{-3/4}$
 is uniformly bounded in~$W^{1,s}$ for every~$s \in [1,d/(d-1))$,
 since~$\nu^{\gamma}$ is uniformly bounded in~$L^{1}(\Omega)$. This implies
 that~$zl^{\gamma} \in W^{1,s}(\Omega)$ is uniformly bounded in~$\gamma$
 and that~$y^{\gamma} \in W^{3,s}(\Omega)$ is uniformly bounded
 w.r.t.~$\gamma$, which by Sobolev imbedding theorems holds also in~$C^{\beta}
 (\bar{\Omega})$, for~$\beta = 4 - d - \varepsilon$ and all~$\varepsilon > 0$.
 Thus Corollary~2.6 in~\cite{HintermuellerSchielaWollner2014}
 is applicable and delivers our desired bound.
\end{proof}
 % -----------------------------------------------------------------------------
 We are now in position to estimate the regularization error.
\begin{thm} \label{thm:error_l_gamma}
 Let~$l$ and~$l^{\gamma}$ be the solutions to~$\dualProblem$
 and~$\regProblemMY$, resp., with corresponding states~$y$ and~$y^{\gamma}$.
 Then for every~$\varepsilon > 0$ there exists a constant~$C_{\varepsilon}>0$,
 independent of~$\gamma$, for which it holds that
 \begin{align*}
  \norm{l - l^{\gamma}}_{L^{2}(\Omega)} +
  \norm{y - y^{\gamma}}_{H^{2}(\Omega)} +
  \norm{y - y^{\gamma}}_{L^{\infty}(\Omega)} \leq
  C_{\varepsilon} \gamma^{-\frac{1}{2} \left(1 - \frac{d}{4}\right) + \varepsilon}.
 \end{align*}
\end{thm}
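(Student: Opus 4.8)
The plan is to derive a coercivity estimate for $\norm{l - l^{\gamma}}_{L^{2}(\Omega)}$ directly from the two variational inequalities and then to propagate it to the states via elliptic regularity. First I would test the optimality condition~\eqref{os:var_ineq} for~$\dualProblem$ with the admissible control $k = l^{\gamma}$ and the condition~\eqref{os_my:var_ineq} for~$\regProblemMY$ with $k = l$, and add the two inequalities. After cancellation this gives
\[
 \int_{\Omega} (q - q^{\gamma})\, z\, (l^{\gamma} - l) \ud x \geq \tfrac{1}{3} \int_{\Omega} \big( l^{-4/3} - (l^{\gamma})^{-4/3} \big)(l^{\gamma} - l) \ud x .
\]
Since $t \mapsto t^{-4/3}$ is strictly decreasing with derivative bounded away from zero on $[M^{-3}, m^{-3}] \supset \{l, l^{\gamma}\}$, the mean value theorem turns the right-hand side into $c\, \norm{l - l^{\gamma}}_{L^{2}(\Omega)}^{2}$ with $c > 0$ depending only on~$m$. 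This is the quantity that must eventually be dominated.

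Next I would reinterpret the left-hand side. Because $-\Delta(y - y^{\gamma}) = z(l - l^{\gamma})$ with homogeneous boundary data, we have $z(l^{\gamma} - l) = -\Delta(y^{\gamma} - y)$, so testing the adjoint equation of Theorem~\ref{thm:optimality_conditions} (the identity for~$\nu$) and the adjoint equation~\eqref{os_my:adj_eq_weak} (for $\nu^{\gamma}$) with $w = y^{\gamma} - y \in V$ rewrites the left-hand side as
\[
 \int_{\bar{\Omega}} (y^{\gamma} - y) \ud \nu(x) - \int_{\Omega} \nu^{\gamma} (y^{\gamma} - y) \ud x .
\]
For the second term I would insert $\nu^{\gamma} = \gamma(y^{\gamma} + \tau)^{-}$ and split $y^{\gamma} - y = (y^{\gamma} + \tau) - (y + \tau)$; using $(y^{\gamma} + \tau)^{-}(y^{\gamma} + \tau) = ((y^{\gamma} + \tau)^{-})^{2} \geq 0$ and $y + \tau \geq 0$ (since $y \in Y_{\mathrm{ad}}$), both resulting contributions are non-positive, so this term may simply be dropped.

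The main obstacle is the first term $\int_{\bar{\Omega}} (y^{\gamma} - y) \ud \nu$, because $y^{\gamma}$ need not lie in $Y_{\mathrm{ad}}$ and the complementarity~\eqref{os:compl} cannot be applied to it directly. Here I would exploit that $\operatorname{supp}\nu \subset \{y = -\tau\}$ and $\nu_{|\partial\Omega} \equiv 0$, so that $(y^{\gamma} - y) \ud \nu = (y^{\gamma} + \tau)\ud \nu$; combined with $\nu \leq 0$ and the pointwise bound $y^{\gamma} + \tau \geq (y^{\gamma} + \tau)^{-}$ this yields
\[
 \int_{\bar{\Omega}} (y^{\gamma} - y) \ud \nu \leq \int_{\bar{\Omega}} (y^{\gamma} + \tau)^{-} \ud \nu \leq \norm{(y^{\gamma} + \tau)^{-}}_{L^{\infty}(\Omega)} \norm{\nu}_{\mathscr{M}(\bar{\Omega})} .
\]
The measure norm $\norm{\nu}_{\mathscr{M}(\bar{\Omega})}$ is a fixed constant, and the violation $\norm{(y^{\gamma} + \tau)^{-}}_{L^{\infty}(\Omega)}$ is controlled by Lemma~\ref{lem:violation_state_constr_order}; this is precisely the step at which the rate $\gamma^{d/4 - 1 + \varepsilon}$ is injected.

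Collecting the estimates gives $c\, \norm{l - l^{\gamma}}_{L^{2}(\Omega)}^{2} \leq C_{\varepsilon}\, \gamma^{d/4 - 1 + \varepsilon}$, and taking square roots (absorbing the factor $\tfrac{1}{2}$ in the exponent and relabelling $\varepsilon/2$ as $\varepsilon$) produces the claimed bound on $\norm{l - l^{\gamma}}_{L^{2}(\Omega)}$. Finally, since $z \in V \hookrightarrow L^{\infty}(\Omega)$ for $d \in \{2,3\}$, the regularity estimate~\eqref{eq:dirichlet_strong_estimate} applied to $-\Delta(y - y^{\gamma}) = z(l - l^{\gamma})$ gives $\norm{y - y^{\gamma}}_{H^{2}(\Omega)} \leq C \norm{z}_{L^{\infty}(\Omega)} \norm{l - l^{\gamma}}_{L^{2}(\Omega)}$, and the embedding $H^{2}(\Omega) \hookrightarrow C(\bar{\Omega})$ transfers the same rate to $\norm{y - y^{\gamma}}_{L^{\infty}(\Omega)}$, which completes the proof.
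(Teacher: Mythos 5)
Your argument is correct and follows essentially the same route as the paper's proof: cross-testing the two variational inequalities, converting the resulting term via the adjoint equations, discarding the two sign-definite contributions, handling the measure term through the complementarity condition~\eqref{os:compl} together with $\nu \leq 0$ and $y^{\gamma}+\tau \geq (y^{\gamma}+\tau)^{-}$, and injecting the rate from Lemma~\ref{lem:violation_state_constr_order} before passing to the states by elliptic regularity and the embedding $H^{2}(\Omega)\hookrightarrow C(\bar{\Omega})$. The only differences are cosmetic (you make explicit the mean-value-theorem coercivity of $t\mapsto t^{-4/3}$ on $[M^{-3},m^{-3}]$, which the paper absorbs into the constant $C$).
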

\begin{proof}
 Using~$l$ as test function in~\eqref{os_my:var_ineq} and~$l^{\gamma}$
 as test function in~\eqref{os:var_ineq} we obtain
 \begin{align*}
  & \mathrel{\phantom{=}} C \norm{l - l^{\gamma}}_{L^{2}(\Omega)}^{2} \\
  & \leq \innerProd{q^{\gamma} - q}{z\,l - z\,l^{\gamma}}_{L^{2}(\Omega)}
  = -\innerProd{q^{\gamma} - q}{\Delta (y - y^{\gamma})}_{L^{2}(\Omega)} \\
  & = \innerProd{y - y^{\gamma}}{\gamma(y^{\gamma} + \tau)^{-}}_{L^{2}(\Omega)} -
  \dualPair{y - y^{\gamma}}{\nu}_{C(\bar{\Omega}),\mathscr{M}(\bar{\Omega})} \\
  & = \innerProd{y + \tau}{\gamma(y^{\gamma} + \tau)^{-}}_{L^{2}(\Omega)} -
  \innerProd{y^{\gamma} + \tau}{\gamma(y^{\gamma} + \tau)^{-}}_{L^{2}(\Omega)} +
  \dualPair{y^{\gamma} - y}{\nu}_{C(\bar{\Omega}),\mathscr{M}(\bar{\Omega})} \\
  & \defines (I) + (II) + (III).
 \end{align*}
 Since~$y \in Y_{\mathrm{ad}}$ we have~$(I) \leq 0$. Moreover~$(II) = -\gamma
 \norm{(y^{\gamma} + \tau)^{-}}_{L^{2}(\Omega)}^{2} \leq 0$. The third addend is
 treated with the complementarity condition~\eqref{os:compl} for
 the multiplier~$\nu$:
 \begin{align*}
  (III) & = \dualPair{y^{\gamma} + \tau}{\nu}_{C(\bar{\Omega}),
  \mathscr{M}(\bar{\Omega})} + \underbrace{\dualPair{-y - \tau}{\nu}_{C(\bar{\Omega}),
  \mathscr{M}(\bar{\Omega})}}_{= 0} \\[-1.5em]
  & = \dualPair{y^{\gamma} + \tau}{\nu}_{C(\bar{\Omega}),
  \mathscr{M}(\bar{\Omega})} \\
  & \leq \dualPair{(y^{\gamma} + \tau)^{-}}{\nu}_{C(\bar{\Omega}),
  \mathscr{M}(\bar{\Omega})}.
 \end{align*}
 With the help of Lemma~\ref{lem:violation_state_constr_order} we arrive at
 \begin{align*}
  C \norm{l - l^{\gamma}}_{L^{2}(\Omega)}^{2} & \leq \dualPair{(y^{\gamma} +
  \tau)^{-}}{\nu}_{C(\bar{\Omega}),\mathscr{M}(\bar{\Omega})} \leq
  \norm{(y^{\gamma} + \tau)^{-}}_{L^{\infty}(\Omega)} \norm{\nu}_{\mathscr{M}
  (\bar{\Omega})} \\
  & \leq C_{\varepsilon} \left( \gamma^{d/4 - 1 + \varepsilon} \right),
 \end{align*}
 with $\gamma$-independent constants~$C$, $C_{\epsilon}$. The continuous dependence
 of the states on the controls and the continuous embedding~$H^{2}(\Omega)
 \hookrightarrow C(\bar{\Omega})$ allow to extend this estimate to~$\norm{y -
 y^{\gamma}}_{H^{2}(\Omega)}$ and~$\norm{y - y^{\gamma}}_{L^{\infty}(\Omega)}$.
\end{proof}
\section{Finite element discretization} \label{sec:discr}
%
% -----------------------------------------------------------------------------
\subsection{Mixed piecewise constant versus piecewise linear approximation}
\label{subsec:pwconstant_vs_pwlinear}
In this section we turn to the variational discretization of the regularized
control problems, taking into account the structure imposed by the optimality
systems, especially the projection formula~\eqref{os:proj} and its discrete
counterparts~\eqref{os_my_h:proj} and~\eqref{os_my_h:proj_p1}. The
function~$(P_{\left[m^{4},M^{4}\right]}(\cdot))^{-3/4}$ applied
to the product of two state variables is evaluated with little effort if those
variables are approximated piecewise constant and yields an implicit piecewise
constant discretization of the optimal control. Further, the finite element
system of the semismooth Newton method in Section~\ref{sec:num},
in particular the parts~\eqref{eq:newton_k} and~\eqref{eq:newton_Dk}
involving the projection formula and its generalized derivative, in this
situation is easily assembled exactly.

Approximating the states with piecewise linear, continuous finite elements
delivers a more involved variational discretization of the controls, since
the projection formula then no longer implies a piecewise polynomial
discretization of the control variable, but rather the negative power of the
pointwise projection of a piecewise quadratic function. Moreover, the
approximate computation of the terms~\eqref{eq:newton_k_p1}
and~\eqref{eq:newton_Dk_p1} introduces an additional error. On the other hand,
a piecewise linear ansatz delivers the higher approximation order two for the
states, as opposed to an order of at most one for a piecewise constant ansatz.
This is supported by the convergence rates w.r.t.\ the grid size~$h$
in the error plots in Section~\ref{sec:num}, and also allows for a better
resolution of the control active sets.

In the remainder of this section we give estimates for the overall
error in both discrete approaches.
% -----------------------------------------------------------------------------
\subsection{Variational discretization of~$\regProblemMY$ with mixed finite elements}
Following the above remarks, we use a mixed finite element method based
on the lowest-order Raviart-Thomas element. To begin with we
recall the mixed formulation of the Dirichlet-problem for the
Poisson equation, i.e., for~$g \in L^{2}(\Omega)$, $y = T(g)$
and~$\mathbf{v} = \nabla y$ there holds
\begin{align}
 \begin{split} \label{eq:mixed_poisson}
	\int_{\Omega} \mathbf{v} \cdot \mathbf{w} \ud x +
	\int_{\Omega} y \, \divergence \mathbf{w} \ud x
	&= 0 \qquad \forall \, \mathbf{w} \in H(\divergence,\Omega), \\
	\int_{\Omega} \phi \, \divergence \mathbf{v} \ud x +
	\int_{\Omega} g \, \phi \ud x
	&= 0 \qquad \forall \, \phi \in L^{2}(\Omega),
 \end{split}
\end{align}
where~$H(\divergence,\Omega) \defined
\Set{\mathbf{w} \in L^{2}(\Omega)^{d} | \divergence \mathbf{w} \in L^{2}(\Omega) }$.
For a given right-hand side~$g \in L^{2}(\Omega)$ we represent the solution
of this mixed problem by~$G(g) \defined (y , \mathbf{v})$. In particular, with~$\mathbf{v}_z \defined \nabla z$ this means $G(f) = (z , \mathbf{v}_z)$.

% % ---------------------------------------------------------------------------
Let a triangulation~$\mathcal{T}_{h}$ of~$\Omega$ be given, where~$h \defined
\max_{T \in \mathcal{T}_{h}} \operatorname{diam}(T)$ and~$\bar{\Omega}$ be
the union of the elements of~$\mathcal{T}_{h}$, with boundary elements allowed
to have one curved face. We additionally assume that the triangulation is
quasi-uniform, i.e., there exists a constant~$\rho > 0$, independent of~$h$,
such that each~$T \in \mathcal{T}_{h}$ is contained in a ball of
radius~$\rho^{-1} h$ and contains a ball of radius~$\rho h$. To define the
discrete version of~\eqref{eq:mixed_poisson} let us introduce the spaces
\begin{align*}
	\mathbf{V}_{h} & \defined RT_{0}(\Omega,\mathcal{T}_{h}) \defined \Set{
	\mathbf{w}_{h} \in
	H(\divergence,\Omega) | \mathbf{w}_{h|T} \in RT_{0}(T) \quad
	\forall \, T \in \mathcal{T}_{h}}, \\
	RT_{0}(T) & \defined \Set{\mathbf{w} \colon T \to \dsR^{d} |
	\exists \, a \in \dsR^{d}, \exists \, \beta \in \dsR :
	\mathbf{w}(x) = a + \beta x \quad \forall \, x \in \dsR^{d}}, \\
	Y_{h} & \defined \Set{\phi_{h} \in L^{2}(\Omega) |\forall \, T \in \mathcal{T}_{h} \ 
	\exists \, \beta_{T} \in \dsR : \phi_{h|T} \equiv \beta_{T}}.
\end{align*}
For a given $g \in L^{2}(\Omega)$ we set~$G_{h}(g) \defined (y_{h} ,
\mathbf{v}_{h}) \in Y_{h} \times \mathbf{V}_{h}$ to be the solution of
\begin{align*}% \label{eq:discrete_var_formulation}
	\int_{\Omega} \mathbf{v}_{h} \cdot \mathbf{w}_{h} \ud x +
	\int_{\Omega} y_{h} \, \divergence \mathbf{w}_{h} \ud x
	&= 0 \qquad \forall \, \mathbf{w}_{h} \in \mathbf{V}_{h}, \\
	\int_{\Omega} \phi_{h} \, \divergence \mathbf{v}_{h} \ud x +
	\int_{\Omega} g \, \phi_{h} \ud x
	&= 0 \qquad \forall \, \phi_{h} \in Y_{h}.
\end{align*}
The resulting error satisfies~(see~\cite{BrezziFortin1991})
\begin{align} \label{eq:discretization_error_L2}
\begin{split}
	\norm{y - y_{h}}_{L^{2}(\Omega)} +
	 \norm{\mathbf{v} - \mathbf{v}_{h}}_{L^{2}(\Omega)^{d}}
	&\leq Ch \left( \norm{y}_{H^{1}(\Omega)} +
	 \norm{\mathbf{v}}_{H^{1}(\Omega)^{d}} \right) \\
	&\leq Ch \norm{y}_{H^{2}(\Omega)} \leq Ch \norm{g}_{L^{2}(\Omega)},
\end{split}
\end{align}
as well as, if~$g \in L^{\infty}(\Omega)$, the pointwise
estimate~(see~\cite[Cor.~5.5]{GastaldiNochetto1989})
\begin{align} \label{eq:discretization_error_LInfty}
	\norm{y - y_{h}}_{L^{\infty}(\Omega)} +
	\norm{\mathbf{v} - \mathbf{v}_{h}}_{L^{\infty}(\Omega)^{d}}
	\leq C h \abs{\log h} \norm{g}_{L^{\infty}(\Omega)}.
\end{align}
The load~$f$ induces a discrete datum~$z_{h} \in Y_{h}$ via~$(z_{h},
\mathbf{v}_{z,h}) = G_{h}(f)$.

\begin{rem}
 For our error analysis we require that~$\norm{z_{h}}_{L^{\infty}(\Omega)}$
 is bounded uniformly in~$h$ in both of the considered discretization
 approaches.
 
 This is satisfied, e.g., if~$f \in L^{\infty}(\Omega)$, since
 then~$\norm{z - z_{h}}_{L^{\infty}(\Omega)} \to 0$
 by~\eqref{eq:discretization_error_LInfty}
 and~\eqref{eq:discretization_error_LInfty_linear}, resp.
 Note that this regularity restriction is not essentially necessary,
 cf.~\cite[Lemma 3.4]{DeckelnickHinze2014}, which holds analogously for the
 scalar states in both discrete approaches and allows for~$f \in
 L^{p}(\Omega)$, $p>2$, with smaller powers of~$h$
 in~\eqref{eq:discretization_error_LInfty}
 and~\eqref{eq:discretization_error_LInfty_linear}.
\end{rem}

Let~$Y_{\mathrm{ad},h} \defined \Set{\phi_{h} \in Y_{h} | \phi_{h|T} \geq -\tau
\quad \forall T \in \mathcal{T}_{h}}$.
The variational discretization~$\discreteProblemMY$ of the regularized control
problems~$\regProblemMY$ reads
\begin{align}
 \min_{l \in L^{\infty}(\Omega)} J^{\gamma}_{h}(l)
  \defined \int_{\Omega}{l^{-1/3} \ud x} + \frac{\gamma}{2} \int_{\Omega}
  \big( (y_{h} + \tau)^{-} \big)^{2} \ud x
 \tag{$\discreteProblemMYTag$}
\end{align}
subject to
\begin{align*}
 &&&&&& (y_{h} , \mathbf{v}_{h})  & = G_{h}(z_{h} \, l),      &&&&&
  \\
 &&&&&& l                 & \in L_{\mathrm{ad}}.
\end{align*}
We note that~$\discreteProblemMY$ is still an
infinite-dimensional optimization problem similar to~$\regProblemMY$ since the
control~$l$ is not discretized. It admits a unique solution~$l^{\gamma}_{h}$,
which is characterized by the optimality system
% -----------------------------------------------------------------------------
\begin{align}
 &&&& (y^{\gamma}_{h} , \mathbf{v}^{\gamma}_{h})   & =
  G_{h}(z_{h} \, l^{\gamma}_{h}),      &&&&&
  \label{os_my_h:prim_eq} \\
 &&&& (q^{\gamma}_{h} , \mathbf{v}^{\gamma}_{q,h}) & =
  G_{h} \left( \gamma(y^{\gamma}_{h} + \tau)^{-} \right),
  \label{os_my_h:adj_eq} \\
  &&&& \int_{\Omega} \left( q^{\gamma}_{h}\, z_{h} - \frac{1}{3} \,
   (l^{\gamma}_{h})^{-4/3} \right) (k - l^{\gamma}_{h}) \ud x
   & \geq 0 \qquad \forall \, k \in L_{\mathrm{ad}},
   \label{os_my_h:var_ineq} \\
  &&&& l^{\gamma}_{h}                 & \in L_{\mathrm{ad}}.
   \nonumber
\end{align}
Condition~\eqref{os_my_h:var_ineq} is equivalent to the projection formula
\begin{align} \label{os_my_h:proj}
 l^\gamma_{h} = \left( P_{\left[m^{4},M^{4}\right]} \left( 3\,q^\gamma_{h}\,z_{h}
 \right) \right)^{-3/4}.
\end{align}
We denote~$\nu^{\gamma}_{h} \defined \gamma(y^{\gamma}_{h} + \tau)^{-}$ and
similarly to the proof of Lemma~\ref{lem:my_bounds} one obtains boundedness
uniformly in~$h$ and~$\gamma$:
% -----------------------------------------------------------------------------
\begin{lem} \label{lem:nu_gamma_h}
        Let~$\gamma > 0$ and~$l^{\gamma}_{h} \in L_{\mathrm{ad}}$ be the solution to
        the problem~$\discreteProblemMY$ with state~$(y^{\gamma}_{h} ,
        \mathbf{v}^{\gamma}_{h}) = G_{h}(z_{h} \, l^{\gamma}_{h})$.     
        Then there exists an $h_{0} > 0$ and a constant~$C > 0$ independent
        of~$\gamma$ and of~$h$ such that
        \begin{align*}
                \norm{\nu^{\gamma}_{h}}_{L^{1}(\Omega)} \leq C
                \qquad \forall \, 0 < h < h_{0}, \quad \forall \, \gamma > 0.
        \end{align*}
\end{lem}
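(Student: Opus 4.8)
The plan is to follow the proof of Lemma~\ref{lem:my_bounds} almost verbatim, substituting the continuous solution operator~$T$ and the adjoint identity~\eqref{os_my:adj_eq_weak} by the discrete operator~$G_{h}$ and its symmetry. First I would test the discrete variational inequality~\eqref{os_my_h:var_ineq} with the Slater control~$l_{\slater}$. Because $l^{\gamma}_{h}$ and $l_{\slater}$ both lie in $L_{\mathrm{ad}}$, the term $\tfrac{1}{3}(l^{\gamma}_{h})^{-4/3}$ is bounded in $L^{\infty}(\Omega)$ by a constant depending only on $m$ and $M$, so the inequality rearranges to $\int_{\Omega} q^{\gamma}_{h} z_{h}(l^{\gamma}_{h} - l_{\slater}) \ud x \le \tfrac{1}{3}\int_{\Omega}(l^{\gamma}_{h})^{-4/3}(l^{\gamma}_{h}-l_{\slater})\ud x \le C$, with $C$ independent of $\gamma$ and $h$.

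The second step replaces the continuous integration by parts. Let $(y_{\slater,h},\mathbf{v}_{\slater,h}) \defined G_{h}(z_{h} l_{\slater})$ be the discrete state of the Slater control. Since $G_{h}$ is linear and its scalar component is self-adjoint with respect to the $L^{2}$-inner product — testing the mixed system for $(q^{\gamma}_{h},\mathbf{v}^{\gamma}_{q,h}) = G_{h}(\nu^{\gamma}_{h})$ against $\mathbf{v}^{\gamma}_{h}-\mathbf{v}_{\slater,h}$ and the system for $G_{h}(z_{h}(l^{\gamma}_{h}-l_{\slater}))$ against $\mathbf{v}^{\gamma}_{q,h}$ produces the common value $\int_{\Omega}\mathbf{v}^{\gamma}_{q,h}\cdot(\mathbf{v}^{\gamma}_{h}-\mathbf{v}_{\slater,h})\ud x$ — one obtains the discrete adjoint identity $\int_{\Omega} q^{\gamma}_{h} z_{h}(l^{\gamma}_{h}-l_{\slater})\ud x = \int_{\Omega}\nu^{\gamma}_{h}(y^{\gamma}_{h}-y_{\slater,h})\ud x$. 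Writing $y^{\gamma}_{h}-y_{\slater,h} = (y^{\gamma}_{h}+\tau)-(y_{\slater,h}+\tau)$ and using $\nu^{\gamma}_{h}(y^{\gamma}_{h}+\tau) = \gamma\big((y^{\gamma}_{h}+\tau)^{-}\big)^{2}\ge 0$ together with $\nu^{\gamma}_{h}\le 0$, the problem reduces to bounding $\int_{\Omega}(-\nu^{\gamma}_{h})(y_{\slater,h}+\tau)\ud x$ from below by a fixed positive multiple of $\norm{\nu^{\gamma}_{h}}_{L^{1}(\Omega)}$.

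The main obstacle, and the reason the threshold $h_{0}$ appears, is transferring the continuous Slater margin $y_{\slater}\ge -\tau+\varepsilon_{\slater}$ to the discrete state $y_{\slater,h}$. For this I would show $\norm{y_{\slater}-y_{\slater,h}}_{L^{\infty}(\Omega)}\to 0$ as $h\to 0$ by splitting the error as $y_{\slater}-y_{\slater,h} = \big(T(z\, l_{\slater})-T(z_{h}\, l_{\slater})\big)+\big(T(z_{h}\,l_{\slater})-G_{h}(z_{h}\,l_{\slater})\big)$. The first bracket equals $T((z-z_{h})l_{\slater})$, whose $L^{\infty}$-norm is controlled via the embedding $H^{2}(\Omega)\hookrightarrow C(\bar\Omega)$, the estimate~\eqref{eq:dirichlet_strong_estimate}, and $\norm{z-z_{h}}_{L^{2}(\Omega)}\le Ch\norm{f}_{L^{2}(\Omega)}$ from~\eqref{eq:discretization_error_L2}; the second bracket is the pointwise finite element error~\eqref{eq:discretization_error_LInfty} for the right-hand side $z_{h}\,l_{\slater}$, which tends to zero because $\norm{z_{h}}_{L^{\infty}(\Omega)}$ is bounded uniformly in $h$ by the preceding remark. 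Consequently there is an $h_{0}>0$ with $y_{\slater,h}+\tau\ge \varepsilon_{\slater}/2$ for all $h<h_{0}$, whence $\int_{\Omega}(-\nu^{\gamma}_{h})(y_{\slater,h}+\tau)\ud x\ge \tfrac{\varepsilon_{\slater}}{2}\norm{\nu^{\gamma}_{h}}_{L^{1}(\Omega)}$, and combining with the first step yields the claimed bound with a constant proportional to $\varepsilon_{\slater}^{-1}$, uniform in $\gamma$ and in $h<h_{0}$.
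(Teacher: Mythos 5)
Your argument is correct and follows essentially the same route as the paper's proof: test the discrete variational inequality with the Slater control, convert $\int_{\Omega} q^{\gamma}_{h} z_{h}(l^{\gamma}_{h}-l_{\slater})\ud x$ into $\int_{\Omega}\nu^{\gamma}_{h}(y^{\gamma}_{h}-y_{\slater,h})\ud x$ via the discrete adjoint identity, drop the nonnegative term $\gamma\norm{(y^{\gamma}_{h}+\tau)^{-}}_{L^{2}(\Omega)}^{2}$, and use the discrete Slater margin $y_{\slater,h}+\tau\ge\varepsilon_{\slater}/2$ for $h<h_{0}$. You merely spell out two steps the paper leaves implicit (the mixed-FEM duality computation and the $L^{\infty}$ convergence of the discrete Slater state), both correctly.
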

\begin{proof}
 Let~$(q^{\gamma}_{h} , \mathbf{v}^{\gamma}_{q,h})$ be the adjoint state
 and~$l_{\slater}$ the Slater element with corresponding discrete
 state~$ (y_{\slater,h} , \mathbf{v}_{\slater,h}) = G_h(z_h l_s)$,
 which is a discrete Slater state for all~$0 < h < h_{0}$ with
 some~$h_{0} > 0$ small enough. In fact, with~$y_{\slater}$
and~$\varepsilon_{\slater}$ from~\eqref{assum:slater} we obtain
from~$\norm{y_{\slater} - y_{\slater,h}}_{L^{\infty}(\Omega)} \to 0$
that~$y_{\slater,h} > -\tau + \varepsilon_{\slater}/2$ 
for~$0 < h < h_{0}$.
 
 We test~\eqref{os_my_h:var_ineq}
 with~$l_{\slater}$ and with the help of the adjoint
 equation~\eqref{os_my_h:adj_eq} and the definition of~$G_{h}$ we get
 \begin{align*}
  C \geq \innerProd{\frac{1}{3}(l^{\gamma}_{h})^{-4/3}}{l^{\gamma}_{h} -
  l_{\slater}}_{L^{2}(\Omega)} \geq \innerProd{q^{\gamma}_{h}}{z_{h}
  (l^{\gamma}_{h} - l_{\slater})}_{L^{2}(\Omega)}
  = \innerProd{\nu^{\gamma}_{h}}{y^{\gamma}_{h} - y_{\slater,h}}_{L^{2}(\Omega)},
 \end{align*}
 with a constant~$C$ independent of~$\gamma$ and~$h$.
 We then have
 \begin{align*}
  \innerProd{\nu^{\gamma}_{h}}{y^{\gamma}_{h} - y_{\slater,h}}_{L^{2}(\Omega)}
  & = \innerProd{\nu^{\gamma}_{h}}{y^{\gamma}_{h} + \tau - \tau
  - y_{\slater,h}}_{L^{2}(\Omega)}\\
  & = \gamma \norm{(y^{\gamma}_{h} + \tau)^{-}}_{L^{2}(\Omega)}^{2}
  + \innerProd{-\nu^{\gamma}_{h}}{y_{\slater,h} + \tau}_{L^{2}(\Omega)} \\
  & \geq 0 + \innerProd{-\nu^{\gamma}_{h}}{\frac{\varepsilon_{\slater}}{2}}_{L^{2}(\Omega)}
  \qquad \forall \, 0 < h < h_{0} \\
  & = \frac{\varepsilon_{\slater}}{2} \norm{\nu^{\gamma}_{h}}_{L^{1}(\Omega)}
  \qquad \forall \, 0 < h < h_{0}.
 \end{align*}
\end{proof}

Aiming for an estimate of the overall
error induced by regularization and discretization we
apply the approach
from~\cite{HintermuellerHinze2009} to our problem setting and derive a
similar asymptotic $h$,$\gamma$-dependent bound on~$(l -
l^{\gamma}_{h})$ in~$L^{2}(\Omega)$, which further allows to couple the
regularization parameter efficiently to the grid size parameter. To this end we
need to estimate the discretization error for the regularized problems.
% -----------------------------------------------------------------------------
\begin{thm} \label{thm:error_l_gamma_h}
 Let~$l^{\gamma}$ and~$l^{\gamma}_{h}$ be the solutions of~$\regProblemMY$
 and~$\discreteProblemMY$, resp., with corresponding states~$y^{\gamma}$
 and~$(y^{\gamma}_{h} , \mathbf{v}^{\gamma}_{h})$. Then there is an~$h_{0} > 0$ and
 a~$\gamma$- and~$h$-independent constant~$C$, such that for
 all~$h \in (0,h_{0})$ and all~$\gamma > 0$
 \begin{align*}
  \norm{l^{\gamma} - l^{\gamma}_{h}}_{L^{2}(\Omega)} +
  \norm{y^{\gamma} - y^{\gamma}_{h}}_{L^{\infty}(\Omega)} \leq
  C h^{1/2} \abs{\log h}^{1/2}.
 \end{align*}
\end{thm}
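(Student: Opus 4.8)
The goal is to bound the discretization error $\|l^\gamma - l^\gamma_h\|_{L^2} + \|y^\gamma - y^\gamma_h\|_{L^\infty}$ uniformly in $\gamma$, with the rate $h^{1/2}|\log h|^{1/2}$. The natural starting point mirrors the proof of Theorem~\ref{thm:error_l_gamma}: I would test the continuous variational inequality~\eqref{os_my:var_ineq} with $k = l^\gamma_h$ and the discrete one~\eqref{os_my_h:var_ineq} with $k = l^\gamma$, then add the two inequalities. Exploiting strong monotonicity of the map $l \mapsto \tfrac13 l^{-4/3}$ on $L_{\mathrm{ad}}$ (the controls are bounded away from $0$ and $\infty$), the left-hand side produces a term $C\|l^\gamma - l^\gamma_h\|_{L^2}^2$, while the right-hand side collects cross-terms of the form $\innerProd{q^\gamma z - q^\gamma_h z_h}{l^\gamma - l^\gamma_h}_{L^2(\Omega)}$.

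\smallskip
The next step is to rewrite this coupling through the primal and adjoint relations. Using the mixed equations and the weak adjoint equation~\eqref{os_my:adj_eq_weak} together with~\eqref{os_my_h:adj_eq}, the product $z(l^\gamma - l^\gamma_h)$ relates to $-\Delta(y^\gamma - \tilde y^\gamma_h)$, where $\tilde y^\gamma_h = T(z_h l^\gamma_h)$ is the continuous state driven by the discrete control, and analogously for the adjoint variables. The aim is to convert the monotonicity estimate into
\begin{align*}
 C\|l^\gamma - l^\gamma_h\|_{L^2(\Omega)}^2
 \leq \gamma \innerProd{(y^\gamma+\tau)^- - (y^\gamma_h+\tau)^-}{y^\gamma - y^\gamma_h}_{L^2(\Omega)}
 + (\text{consistency terms}).
\end{align*}
The first term on the right is nonnegative-definite after splitting, since $s \mapsto s^-$ is monotone; crucially its \emph{favourable sign} combined with the $\gamma$-factor is what keeps the final bound uniform in $\gamma$ rather than blowing up. The consistency terms measure the difference between $G$ and $G_h$ applied to the same data, and between $z$ and $z_h$.

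\smallskip
These consistency terms are controlled by the finite element error estimates~\eqref{eq:discretization_error_L2} and, for the $L^\infty$ contributions, the pointwise estimate~\eqref{eq:discretization_error_LInfty}, which supplies the $h|\log h|$ factor. Here the uniform bounds from Lemma~\ref{lem:nu_gamma_h} (on $\nu^\gamma_h$ in $L^1$, hence on $q^\gamma_h$ and on the $W^{1,s}$-norm of $l^\gamma_h$ via the projection formula~\eqref{os_my_h:proj}) are essential: they ensure the data $z_h l^\gamma_h$ and $\nu^\gamma_h$ entering $G_h$ have $\gamma$-independent regularity, so the constants multiplying the $h$-powers do not depend on $\gamma$. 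The duality pairing $\innerProd{\nu^\gamma}{\,\cdot\,}$ against an $L^\infty$-error picks up $\|\nu^\gamma\|_{L^1}\,\|\cdot\|_{L^\infty} \lesssim h|\log h|$, and after absorbing $\gamma$-dependent cross-products using Young's inequality against the favourably-signed $\gamma$-term, one obtains $\|l^\gamma - l^\gamma_h\|_{L^2}^2 \lesssim h|\log h|$, giving the stated $h^{1/2}|\log h|^{1/2}$ rate. The $L^\infty$-bound on $y^\gamma - y^\gamma_h$ then follows by the triangle inequality through $\tilde y^\gamma_h$, combining the continuous-dependence estimate~\eqref{eq:dirichlet_strong_estimate} with the embedding $H^2 \hookrightarrow C(\bar\Omega)$ on $\|y^\gamma - \tilde y^\gamma_h\|$ and the pointwise FE estimate on $\|\tilde y^\gamma_h - y^\gamma_h\|_{L^\infty}$.

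\smallskip
\textbf{Main obstacle.} The delicate point is keeping every constant independent of $\gamma$. The regularization term scales like $\gamma$, so naively bounding the mixed $\gamma$-dependent cross-terms would reintroduce positive powers of $\gamma$. The heart of the argument is to recognize the sign structure: the "diagonal" $\gamma$-term $\gamma\|(y^\gamma+\tau)^- - (y^\gamma_h+\tau)^-\|^2$-type quantity must be retained on the beneficial side so that the remaining $\gamma$-dependent error contributions can be absorbed into it via Young's inequality, leaving only $\gamma$-free multiples of $h|\log h|$. Managing this absorption cleanly — rather than letting $\gamma$ leak into the final constant — is the crux of the proof.
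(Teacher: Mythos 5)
Your overall strategy coincides with the paper's: cross-testing the two variational inequalities, using strong monotonicity of $l\mapsto\tfrac13 l^{-4/3}$ on $L_{\mathrm{ad}}$, exploiting the monotonicity of $s\mapsto s^-$ to neutralize the $\gamma$-weighted term, and extracting the rate from an $L^1$--$L^\infty$ pairing between the multipliers and pointwise finite element errors. Two points need repair, however. First, your displayed inequality has the monotonicity term with the wrong sign: $\gamma\langle (y^\gamma+\tau)^- - (y^\gamma_h+\tau)^-,\, y^\gamma - y^\gamma_h\rangle$ is \emph{nonnegative}, so placed on the majorizing side it is useless. What actually arises (after inserting the intermediate discrete state $\bar y_h = G_h(z_h\, l^\gamma)$ and splitting $y^\gamma_h - \bar y_h = (y^\gamma_h - y^\gamma) + (y^\gamma - \bar y_h)$) is the term with $y^\gamma_h - y^\gamma$ in the second slot, which is $\le 0$ and is simply dropped; no Young-inequality absorption is needed or used, and the remaining piece is bounded by $\max\{\|\nu^\gamma\|_{L^1},\|\nu^\gamma_h\|_{L^1}\}\,\|y^\gamma-\bar y_h\|_{L^\infty}\le Ch|\log h|$.

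Second, and more substantively: you claim the consistency terms are controlled by \eqref{eq:discretization_error_L2} and \eqref{eq:discretization_error_LInfty}, but the adjoint consistency error $q^\gamma - q^\nu_h$ (with $q^\nu_h$ the discrete adjoint driven by the continuous multiplier, $(q^\nu_h,\mathbf{v}^\nu_h)=G_h(\nu^\gamma)$) has the datum $\nu^\gamma=\gamma(y^\gamma+\tau)^-$, which is uniformly bounded only in $L^1(\Omega)$; its $L^2$ and $L^\infty$ norms are not controlled uniformly in $\gamma$, so those standard estimates would reintroduce $\gamma$ into the constant. The paper closes this gap with a separate transposition (Casas-type) argument: set $p=\sgn(q^\gamma-q^\nu_h)$, let $(y^p,\mathbf{v}^p)=G(p)$ and $(y^p_h,\mathbf{v}^p_h)=G_h(p)$, and shift the finite element error onto the primal side to obtain $\|q^\gamma-q^\nu_h\|_{L^1(\Omega)}=\langle y^p-y^p_h,\nu^\gamma\rangle\le\|y^p-y^p_h\|_{L^\infty(\Omega)}\|\nu^\gamma\|_{L^1(\Omega)}\le Ch|\log h|$, where the pointwise estimate \eqref{eq:discretization_error_LInfty} is now applied to the datum $p$ with $\|p\|_{L^\infty(\Omega)}\le 1$. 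Your closing remark about pairing $\nu^\gamma$ against an $L^\infty$ error is exactly the right mechanism, but it must be routed through this duality step for the adjoint consistency term; without it the $\gamma$-uniformity of the final constant is not established.
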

\begin{proof}
 We define the auxiliary variable~$(q^{\nu}_{h},\mathbf{v}^{\nu}_{h}) =
 G_{h}(\nu^{\gamma})$
 and test the problems' variational inequalities with the respective
 solutions to obtain
 \begin{align*}
  &\phantom{\ =\ } C \norm{l^{\gamma} - l^{\gamma}_{h}}_{L^{2}(\Omega)}^{2}
  \leq \innerProd{q^{\gamma}z -
  q^{\gamma}_{h} z_{h}}{l^{\gamma}_{h} - l^{\gamma}}_{L^{2}(\Omega)} \\
  & = \innerProd{q^{\gamma}z - q^{\gamma} z_{h}}
  {l^{\gamma}_{h} - l^{\gamma}}_{L^{2}(\Omega)} +
  \innerProd{q^{\gamma}z_{h} - q^{\nu}_{h} \, z_{h}}
  {l^{\gamma}_{h} - l^{\gamma}}_{L^{2}(\Omega)} +
  \innerProd{q^{\nu}_{h} z_{h} - q^{\gamma}_{h} \, z_{h}}
  {l^{\gamma}_{h} - l^{\gamma}}_{L^{2}(\Omega)} \\
  & \defines (I) + (II) + (III).
 \end{align*}
 In view of Lemma~\ref{lem:my_bounds} we have for sufficiently
 small~$0 < h < h_{0}$ that
 \begin{align*} % \label{eq:term_i_gamma}
  (I) & \leq \norm{q^{\gamma}}_{L^{2}(\Omega)} \norm{z - z_{h}}_{L^{2}(\Omega)}
   \norm{l^{\gamma}_{h} - l^{\gamma}}_{L^{\infty}(\Omega)} \leq C h
   \norm{l^{\gamma}_{h} - l^{\gamma}}_{L^{\infty}(\Omega)} \leq C h.
 \end{align*}
 For the second addend we find
 \begin{align*} 
  (II) & \leq \norm{z_{h}}_{L^{\infty}(\Omega)}
   \norm{q^{\gamma} - q^{\nu}_{h}}_{L^{1}(\Omega)}
   \norm{l^{\gamma}_{h} - l^{\gamma}}_{L^{\infty}(\Omega)}
   \leq C \norm{q^{\gamma} - q^{\nu}_{h}}_{L^{1}(\Omega)}.
 \end{align*}
 To estimate the finite element error~$\norm{q^{\gamma} -
 q^{\nu}_{h}}_{L^{1}(\Omega)}$ we set~$p = \sgn(q^{\gamma} - q^{\nu}_{h}) \in L^{2}(\Omega)$
 (cf. the proof of Lemma~8.3.11 on p.~228 in~\cite{BrennerScott2008}) and consider
 \begin{align} \label{eq:q_gamma_times_p}
  \innerProd{q^{\gamma} - q^{\nu}_{h}}{p}_{L^{2}(\Omega)} =
   \innerProd{q^{\gamma}}{p}_{L^{2}(\Omega)} -
    \innerProd{q^{\nu}_{h}}{p}_{L^{2}(\Omega)}.
 \end{align}
 Defining~$(y^{p}, \mathbf{v}^{p}) = G(p)$ we have (cf.~\cite{Casas1985},
 proof of Theorem~3 with piecewise linear finite elements)
 \begin{align*}
  \innerProd{q^{\gamma}}{p}_{L^{2}(\Omega)} =
   \innerProd{q^{\gamma}}{-\Delta y^{p}}_{L^{2}(\Omega)} =
   \innerProd{-\Delta q^{\gamma}}{y^{p}}_{L^{2}(\Omega)} =
   \innerProd{y^{p}}{\nu^{\gamma}}_{L^{2}(\Omega)}
 \end{align*}
 and furthermore, setting~$(y^{p}_{h}, \mathbf{v}^{p}_{h}) = G_{h}(p)$ and using
 the definitions of~$q^{\nu}_{h}$ and~$\mathbf{v}^{\nu}_{h}$,
 \begin{align*}
  \innerProd{q^{\nu}_{h}}{p}_{L^{2}(\Omega)}
   = & -\innerProd{q^{\nu}_{h}}{\divergence \mathbf{v}^{p}_{h}}_{L^{2}(\Omega)}
   = \innerProd{\mathbf{v}^{\nu}_{h} \cdot \mathbf{v}^{p}_{h}}{1}_{L^{2}(\Omega)}
   = -\innerProd{y^{p}_{h}}{\divergence \mathbf{v}^{\nu}_{h}}_{L^{2}(\Omega)}\\
   = & \innerProd{y^{p}_{h}}{\nu^{\gamma}}_{L^{2}(\Omega)}.
 \end{align*}
 Inserting these into~\eqref{eq:q_gamma_times_p}, we conclude
 with~$\norm{p}_{L^{\infty}(\Omega)} \leq 1$ that
 \begin{align*}
 \norm{q^{\gamma} - q^{\nu}_{h}}_{L^{1}(\Omega)}
  = & \innerProd{q^{\gamma} - q^{\nu}_{h}}{p}_{L^{2}(\Omega)}
  = \innerProd{q^{\gamma} - q^{\nu}_{h}}{\sgn(q^{\gamma} - q^{\nu}_{h})}_{L^{2}(\Omega)}
  = \innerProd{y^{p} - y^{p}_{h}}{\nu^{\gamma}}_{L^{2}(\Omega)} \\
  \leq & \norm{y^{p} - y^{p}_{h}}_{L^{\infty}(\Omega)}
   \norm{\nu^{\gamma}}_{L^{1}(\Omega)}
  \leq C h \abs{\log h} \norm{p}_{L^{\infty}(\Omega)}
  \leq C h \abs{\log h}.
 \end{align*}
 
 Finally, we
 set~$(\bar{y}_{h},\bar{\mathbf{v}}_{h}) = G_{h}(z_{h}\,l^{\gamma})$ and
 rewrite the third addend~$(III)$ using the definitions of~$y^{\gamma}_{h}$,
 $\bar{y}_{h}$, $q^{\nu}_{h}$ and~$q^{\gamma}_{h}$ together with the
 monotonicity of the min-function
 \begin{align*}
  & \mathrel{\phantom{=}} (III) = \innerProd{\gamma(y^{\gamma} + \tau)^{-} -
   \gamma(y^{\gamma}_{h} + \tau)^{-}}{y^{\gamma}_{h} - \bar{y}_{h}}_{L^{2}(\Omega)} \\
  & = \underbrace{\innerProd{\gamma(y^{\gamma} + \tau)^{-} - \gamma(y^{\gamma}_{h} +
  \tau)^{-}}{y^{\gamma}_{h} - y^{\gamma}}_{L^{2}(\Omega)}}_{\leq 0}
   + \innerProd{\gamma(y^{\gamma} + \tau)^{-} - \gamma(y^{\gamma}_{h} +
   \tau)^{-}}{y^{\gamma} - \bar{y}_{h}}_{L^{2}(\Omega)} \\
   & \leq \innerProd{\gamma(y^{\gamma} + \tau)^{-} - \gamma(y^{\gamma}_{h} +
   \tau)^{-}}{y^{\gamma} - \bar{y}_{h}}_{L^{2}(\Omega)}.
 \end{align*}
 Now let~$(\bar{y}, \bar{\mathbf{v}}) = G(z_{h}\,l^{\gamma})$ and similar to
 the proof of~\cite[Theorem~3.5]{HintermuellerHinze2009} one has
 for~$0 < h < h_{0}$
 \begin{align*}
  & \mathrel{\phantom{=}} \innerProd{\gamma(y^{\gamma} + \tau)^{-} -
   \gamma(y^{\gamma}_{h} + \tau)^{-}} {y^{\gamma} - \bar{y}_{h}}_{L^{2}(\Omega)} \leq
   \max \left\{ \norm{\nu^{\gamma}}_{L^{1}(\Omega)}, 
   \norm{\nu^{\gamma}_{h}}_{L^{1}(\Omega)} \right\}
    \norm{y^{\gamma} - \bar{y}_{h}}_{L^{\infty}(\Omega)} \\
  & \leq C \norm{y^{\gamma} - \bar{y}}_{L^{\infty}(\Omega)}
   + C \norm{\bar{y} - \bar{y}_{h}}_{L^{\infty}(\Omega)}
   \leq C \norm{y^{\gamma} - \bar{y}}_{H^{2}(\Omega)} + C h \abs{\log h}
   \norm{z_{h}}_{L^{\infty}(\Omega)} \norm{l^{\gamma}}_{L^{\infty}(\Omega)} \\
  & \leq C \norm{l^{\gamma}}_{L^{\infty}(\Omega)}
   \norm{z - z_{h}}_{L^{2}(\Omega)} + C h \abs{\log h}
   \leq C h + C h \abs{\log h} \leq C h \abs{\log h}.
 \end{align*}
 Altogether, we obtain the proposed error bound for the controls.
 The estimate for the states follows similarly to the proof of
 Theorem~\ref{thm:error_l_gamma} and with~\eqref{eq:discretization_error_L2}
 and~\eqref{eq:discretization_error_LInfty}.
 With~$(\bar{y}^{h}, \bar{\mathbf{v}}^{h}) = G(z_{h} \, l^{\gamma}_{h})$
 there holds for sufficiently small~$h > 0$ that
 \begin{align*}
  \norm{y^{\gamma} - y^{\gamma}_{h}}_{L^{\infty}(\Omega)}
  & \leq \norm{y^{\gamma} - \bar{y}}_{L^{\infty}(\Omega)}
  + \norm{\bar{y} - \bar{y}^{h}}_{L^{\infty}(\Omega)}
  + \norm{\bar{y}^{h} - y^{\gamma}_{h}}_{L^{\infty}(\Omega)} \\
  & \leq C \norm{y^{\gamma} - \bar{y}}_{H^{2}(\Omega)}
  + C \norm{\bar{y} - \bar{y}^{h}}_{H^{2}(\Omega)}
  + Ch \abs{\log h} \norm{z_{h}}_{L^{\infty}(\Omega)}
   \norm{l^{\gamma}_{h}}_{L^{\infty}(\Omega)} \\
  & \leq C \norm{l^{\gamma}}_{L^{\infty}(\Omega)} \norm{z - z_{h}}_{L^{2}(\Omega)}
  + C \norm{z_{h}}_{L^{\infty}(\Omega)} \norm{l^{\gamma} - l^{\gamma}_{h}}_{L^{2}(\Omega)}
  + Ch \abs{\log h} \\
  & \leq C h \norm{f}_{L^{2}(\Omega)}
  + C h^{1/2} \abs{\log h}^{1/2}
  + Ch \abs{\log h} \\
  & \leq C h^{1/2} \abs{\log h}^{1/2}.
 \end{align*}
\end{proof}
Combination of the estimates for the two error components immediately gives
a bound for the overall error.
% -----------------------------------------------------------------------------
\begin{thm} \label{thm:error_overall}
 Let~$l$ and~$l^{\gamma}_{h}$ denote the solutions of~$\dualProblem$
 and~$\discreteProblemMY$, resp., with corresponding states~$(y,\mathbf{v})$
 and~$(y^{\gamma}_{h} , \mathbf{v}^{\gamma}_{h})$. Then there exist
 an~$h_{0} > 0$ and for every~$\varepsilon >
 0$ a $\gamma$- and~$h$-independent constant~$C_{\varepsilon}$, such that
 for all~$0 < h < h_{0}$ and~$\gamma > 0$
 \begin{align} \label{eq:error_overall}
  & \mathrel{\phantom{\leq}} \norm{l - l^{\gamma}_{h}}_{L^{2}(\Omega)} +
  \norm{y - y^{\gamma}_{h}}_{L^{\infty}(\Omega)}
  \leq
  C_{\varepsilon} \left( \gamma^{-\frac{1}{2} \left(1 - \frac{d}{4}\right) + \varepsilon} +
  h^{\frac{1}{2}} \abs{\log h}^{\frac{1}{2}} \right).
 \end{align}
\end{thm}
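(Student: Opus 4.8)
The plan is to obtain the bound~\eqref{eq:error_overall} by inserting the intermediate regularized solution $(l^{\gamma}, y^{\gamma})$ of~$\regProblemMY$ and then combining the two error estimates already established. Concretely, I would first apply the triangle inequality to each of the two terms separately, writing
\begin{align*}
 \norm{l - l^{\gamma}_{h}}_{L^{2}(\Omega)}
  &\leq \norm{l - l^{\gamma}}_{L^{2}(\Omega)}
   + \norm{l^{\gamma} - l^{\gamma}_{h}}_{L^{2}(\Omega)}, \\
 \norm{y - y^{\gamma}_{h}}_{L^{\infty}(\Omega)}
  &\leq \norm{y - y^{\gamma}}_{L^{\infty}(\Omega)}
   + \norm{y^{\gamma} - y^{\gamma}_{h}}_{L^{\infty}(\Omega)}.
\end{align*}
On the right-hand sides, the differences $l - l^{\gamma}$ and $y - y^{\gamma}$ measure the pure regularization error between~$\dualProblem$ and~$\regProblemMY$, whereas $l^{\gamma} - l^{\gamma}_{h}$ and $y^{\gamma} - y^{\gamma}_{h}$ measure the pure discretization error between~$\regProblemMY$ and~$\discreteProblemMY$.

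The regularization contributions are controlled directly by Theorem~\ref{thm:error_l_gamma}, which for every~$\varepsilon > 0$ supplies a~$\gamma$-independent constant~$C_{\varepsilon}$ bounding $\norm{l - l^{\gamma}}_{L^{2}(\Omega)} + \norm{y - y^{\gamma}}_{L^{\infty}(\Omega)}$ (in fact together with the full $H^{2}$-norm of the state difference) by $C_{\varepsilon}\,\gamma^{-\frac{1}{2}\left(1 - \frac{d}{4}\right) + \varepsilon}$. The discretization contributions are controlled by Theorem~\ref{thm:error_l_gamma_h}, which furnishes an~$h_{0} > 0$ and a~$\gamma$- and~$h$-independent constant bounding $\norm{l^{\gamma} - l^{\gamma}_{h}}_{L^{2}(\Omega)} + \norm{y^{\gamma} - y^{\gamma}_{h}}_{L^{\infty}(\Omega)}$ by $C\,h^{1/2}\abs{\log h}^{1/2}$ for all~$h \in (0, h_{0})$ and all~$\gamma > 0$. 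Summing the two displayed inequalities and inserting these bounds then yields~\eqref{eq:error_overall} directly.

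Since both ingredients are already in hand, there is no genuine obstacle here; the only point requiring mild care is the bookkeeping of the quantifiers and constants. One inherits the threshold~$h_{0}$ from Theorem~\ref{thm:error_l_gamma_h}, and for each fixed~$\varepsilon > 0$ defines the final constant as (a multiple of) the sum of the corresponding~$C_{\varepsilon}$ from Theorem~\ref{thm:error_l_gamma} and the constant~$C$ from Theorem~\ref{thm:error_l_gamma_h}. As both are independent of~$\gamma$ and~$h$, the resulting constant~$C_{\varepsilon}$ shares this independence, and the estimate holds uniformly for all~$0 < h < h_{0}$ and all~$\gamma > 0$, as claimed.
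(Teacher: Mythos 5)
Your proposal is correct and coincides with the paper's own argument: the paper likewise splits $l - l^{\gamma}_{h}$ and $y - y^{\gamma}_{h}$ via the intermediate regularized solution and invokes Theorems~\ref{thm:error_l_gamma} and~\ref{thm:error_l_gamma_h} for the two contributions. The bookkeeping of $h_{0}$ and the constants is handled exactly as you describe.
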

\begin{proof}
We split the overall control error into the sum
of~$\norm{l - l^{\gamma}}_{L^{2}(\Omega)}$
and~$\norm{l^{\gamma} - l^{\gamma}_{h}}_{L^{2}(\Omega)}$
and apply
Theorems~\ref{thm:error_l_gamma}
and~\ref{thm:error_l_gamma_h} to estimate the regularization and
discretization errors. For the states we proceed analogously.
\end{proof}
Estimate~\eqref{eq:error_overall} now suggests a coupling of~$\gamma$ and the
grid size~$h$ of the form~$\gamma = O(h^{-\kappa})$. Equilibrating the
errors depending on the dimension~$d$, we obtain
\begin{align*}
  \norm{l - l^{\gamma}_{h}}_{L^{2}(\Omega)} +
  \norm{y - y^{\gamma}_{h}}_{L^{\infty}(\Omega)} \leq
   C_{\varepsilon} h^{\frac{1}{2} - \varepsilon}
  \begin{cases}
   \text{for } \kappa = 2, \text{ if } d = 2, \\
   \text{for } \kappa = 4, \text{ if } d = 3.
  \end{cases}
\end{align*}

% -----------------------------------------------------------------------------
% --- Piecewise linear approach
% -----------------------------------------------------------------------------

\subsection{Variational discretization of~$\regProblemMY$ with continuous
piecewise linear finite elements}
Next we consider piecewise linear and continuous finite element approximations
of the state in problem~$\regProblemMY$. For~$g \in L^2(\Omega)$ given we
denote by~$y_h = G_h(g) \in Y_h$ the solution to
\begin{align*}
 \innerProd{\nabla y_h}{\nabla w_h}_{L^{2}(\Omega)^{d}}
 = \innerProd{g}{w_h}_{L^{2}(\Omega)} \quad \forall w_h \in
 Y_{h},
\end{align*}
where
\begin{align*}
 Y_{h} \defined \set{w_h \in C(\bar{\Omega}) | w_{h|T} \in P^{1}(T)
 \quad \forall T \in \mathcal{T}_{h}} \cap H^1_0(\Omega).
\end{align*}
Then, with~$y = T(g)$ we have the well known error estimate
\begin{align} \label{eq:discretization_error_L2_linear}
 \norm{y - y_h}_{L^{2}(\Omega)} + h \norm{\nabla(y - y_h)}_{L^{2}(\Omega)^{d}} \leq
 C h^2 \norm{y}_{H^{2}(\Omega)} \leq C h^2 \norm{g}_{L^{2}(\Omega)}.
\end{align}
Provided that~$g \in L^{\infty}(\Omega)$, one can use the
$L^{p}(\Omega)$-estimate from~\cite[Theorem~2.2 and the subsequent Remark]{Schatz1998} together with careful control of the constant
in the a-priori $L^{p}(\Omega)$-estimate for~\eqref{eq:dirichlet_diff}
to prove the following bound on the error in the maximum norm
\begin{align} \label{eq:discretization_error_LInfty_linear}
 \norm{y - y_h}_{L^{\infty}(\Omega)} \leq
 C h^2 \abs{\log h}^{2} \norm{g}_{L^{\infty}(\Omega)}.
\end{align}
We refer to~\cite[Lemma~1]{DeckelnickHinze2008} for the complete argument.

Enforcing the state
constraints in the (inner) nodes $\{x_i\}_{i=1,\ldots,N}$ of the grid, i.e.,
using the space~$Y_{\mathrm{ad},h} \defined \Set{\phi_{h} \in Y_{h} |
\phi_{h}(x_i) \geq -\tau, \quad 1 \leq i \leq N}$ of admissible states,
the variational discretization of the regularized optimization
problem~$\regProblemMY$ is now straightforward. In
problem~$\discreteProblemMY$ we only have to replace the discrete solution
operator by its counterpart of the present solution. With this and the implicit
datum~$z_{h} = G_{h}(f)$ we obtain
% -----------------------------------------------------------------------------
\begin{align*}
 \min_{l \in L^{\infty}(\Omega)} J^{\gamma}_{h}(l)
  \defined \int_{\Omega}{l^{-1/3} \ud x} + \frac{\gamma}{2} \int_{\Omega}
  \big( (y_{h} + \tau)^{-} \big)^{2} \ud x
 \tag{$\discretePwLinearProblemMYTag$}
\end{align*}
subject to
\begin{align*}
 &&&&&& y_{h}  & = G_{h}(z_{h} \, l),      &&&&& \\
 &&&&&& l      & \in L_{\mathrm{ad}}.
\end{align*}
Problem~$\discretePwLinearProblemMY$ admits a unique
solution~$l^{\gamma}_{h}$, which together with the discrete adjoint
state~$q^{\gamma}_{h}$ satisfies
% -----------------------------------------------------------------------------
\begin{align}
 &&&& y^{\gamma}_{h} & = G_{h}(z_{h} \, l^{\gamma}_{h}),      &&&&&
  \label{os_my_h:prim_eq_p1} \\
 &&&& q^{\gamma}_{h} & = G_{h} \left( \gamma(y^{\gamma}_{h} + \tau)^{-} \right),
  \label{os_my_h:adj_eq_p1} \\
  &&&& \int_{\Omega} \left( q^{\gamma}_{h}\, z_{h} - \frac{1}{3} \,
   (l^{\gamma}_{h})^{-4/3} \right) (k - l^{\gamma}_{h}) \ud x
   & \geq 0 \qquad \forall \, k \in L_{\mathrm{ad}},
   \label{os_my_h:var_ineq_p1} \\
  &&&& l^{\gamma}_{h}                 & \in L_{\mathrm{ad}}.
   \nonumber
\end{align}
The variational inequality~\eqref{os_my_h:var_ineq_p1} can again be rewritten
as a projection formula
\begin{align} \label{os_my_h:proj_p1}
 l^{\gamma}_{h} = \left( P_{\left[m^{4},M^{4}\right]} \left( 3\,q^\gamma_{h}\,z_{h}
 \right) \right)^{-3/4}.
\end{align}
We note that~$(q_{h} z_{h})_{|T}$ is a quadratic function, whose projection in
general can not be represented by a polynomial over~$T$.
As in Lemma~\ref{lem:nu_gamma_h} we infer~$\norm{\nu^\gamma_h
 \defined \gamma(y^{\gamma}_{h} + \tau)^{-}}_{L^{1}(\Omega)}
\leq C$ independently of~$\gamma$ and~$h$. Moreover,
Theorem~\ref{thm:error_l_gamma_h} holds accordingly.
% -----------------------------------------------------------------------------
\begin{thm} \label{thm:error_l_gamma_h_p1}
 Let~$l^{\gamma}$ and~$l^{\gamma}_{h}$ be the solutions of~$\regProblemMY$
 and~$\discretePwLinearProblemMY$, resp., with corresponding
 states~$y^{\gamma}$ and~$y^{\gamma}_{h}$. Then there exists an~$h_{0} > 0$ and
 a~$\gamma$- and~$h$-independent positive constant~$C$, such that for
 all~$h \in (0,h_{0})$ and all~$\gamma > 0$ we have
 \begin{align*}
  \norm{l^{\gamma} - l^{\gamma}_{h}}_{L^{2}(\Omega)} +
  \norm{y^{\gamma} - y^{\gamma}_{h}}_{H^{1}(\Omega)} +
  \norm{y^{\gamma} - y^{\gamma}_{h}}_{L^{\infty}(\Omega)} \leq C h \abs{\log h}.
 \end{align*}
\end{thm}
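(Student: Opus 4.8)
The plan is to follow the proof of Theorem~\ref{thm:error_l_gamma_h} essentially line by line, the sole structural change being that the lowest-order Raviart--Thomas estimates \eqref{eq:discretization_error_L2} and \eqref{eq:discretization_error_LInfty} are everywhere replaced by their conforming piecewise linear counterparts \eqref{eq:discretization_error_L2_linear} and \eqref{eq:discretization_error_LInfty_linear}, which carry one extra power of $h$ and improve the final rate from $h^{1/2}$ to $h$. First I would test the continuous variational inequality \eqref{os_my:var_ineq} with $k = l^{\gamma}_{h}$ and the discrete one \eqref{os_my_h:var_ineq_p1} with $k = l^{\gamma}$, add the two, and exploit the strong monotonicity of $l \mapsto \tfrac{1}{3} l^{-4/3}$ on the bounded interval $[M^{-3},m^{-3}]$ to arrive at
\begin{align*}
 C \norm{l^{\gamma} - l^{\gamma}_{h}}_{L^{2}(\Omega)}^{2}
 \leq \innerProd{q^{\gamma} z - q^{\gamma}_{h} z_{h}}{l^{\gamma}_{h} - l^{\gamma}}_{L^{2}(\Omega)}
 \defines (I) + (II) + (III),
\end{align*}
where, introducing the auxiliary adjoint state $q^{\nu}_{h} = G_{h}(\nu^{\gamma})$, the three terms are $(I) = \innerProd{q^{\gamma}(z - z_{h})}{l^{\gamma}_{h} - l^{\gamma}}_{L^{2}(\Omega)}$, $(II) = \innerProd{(q^{\gamma} - q^{\nu}_{h}) z_{h}}{l^{\gamma}_{h} - l^{\gamma}}_{L^{2}(\Omega)}$ and $(III) = \innerProd{(q^{\nu}_{h} - q^{\gamma}_{h}) z_{h}}{l^{\gamma}_{h} - l^{\gamma}}_{L^{2}(\Omega)}$.

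For $(I)$ I would use that $l^{\gamma}, l^{\gamma}_{h} \in L_{\mathrm{ad}}$ are uniformly bounded in $L^{\infty}(\Omega)$ together with Lemma~\ref{lem:my_bounds} and the $L^{2}$-rate in \eqref{eq:discretization_error_L2_linear}, giving $(I) \leq \norm{q^{\gamma}}_{L^{2}(\Omega)} \norm{z - z_{h}}_{L^{2}(\Omega)} \norm{l^{\gamma}_{h} - l^{\gamma}}_{L^{\infty}(\Omega)} \leq C h^{2}$. For $(II)$ I would repeat the sign-function duality argument with $p = \sgn(q^{\gamma} - q^{\nu}_{h})$: setting $y^{p} = T(p)$ and $y^{p}_{h} = G_{h}(p)$, the symmetry of the conforming Galerkin form (test $y^{p}_{h} = G_{h}(p)$ with $q^{\nu}_{h} \in Y_{h}$ and $q^{\nu}_{h} = G_{h}(\nu^{\gamma})$ with $y^{p}_{h} \in Y_{h}$), together with $q^{\gamma} = T(\nu^{\gamma})$, yields $\innerProd{q^{\gamma} - q^{\nu}_{h}}{p}_{L^{2}(\Omega)} = \innerProd{\nu^{\gamma}}{y^{p} - y^{p}_{h}}_{L^{2}(\Omega)}$, so that with $\norm{p}_{L^{\infty}(\Omega)} \leq 1$, Lemma~\ref{lem:my_bounds} and the maximum-norm bound \eqref{eq:discretization_error_LInfty_linear} one gets $\norm{q^{\gamma} - q^{\nu}_{h}}_{L^{1}(\Omega)} \leq \norm{\nu^{\gamma}}_{L^{1}(\Omega)} \norm{y^{p} - y^{p}_{h}}_{L^{\infty}(\Omega)} \leq C h^{2} \abs{\log h}^{2}$ and hence, using the uniform bound on $\norm{z_{h}}_{L^{\infty}(\Omega)}$, that $(II) \leq C h^{2} \abs{\log h}^{2}$. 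For $(III)$ I would argue exactly as in Theorem~\ref{thm:error_l_gamma_h}: the discrete primal--adjoint identity rewrites it as $\innerProd{\nu^{\gamma} - \nu^{\gamma}_{h}}{y^{\gamma}_{h} - \bar{y}_{h}}_{L^{2}(\Omega)}$ with $\bar{y}_{h} = G_{h}(z_{h} l^{\gamma})$; discarding the nonpositive contribution stemming from monotonicity of the min-function leaves $(III) \leq C \norm{y^{\gamma} - \bar{y}_{h}}_{L^{\infty}(\Omega)}$, which via the continuous intermediate $\bar{y} = T(z_{h} l^{\gamma})$, the estimate \eqref{eq:dirichlet_strong_estimate} and \eqref{eq:discretization_error_LInfty_linear} is bounded by $C h^{2} \abs{\log h}^{2}$. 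Collecting the three bounds gives $\norm{l^{\gamma} - l^{\gamma}_{h}}_{L^{2}(\Omega)}^{2} \leq C h^{2} \abs{\log h}^{2}$, i.e.\ the claimed $C h \abs{\log h}$ bound on the control error.

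The state errors then follow by the triangle inequality through the continuous intermediates $\bar{y} = T(z_{h} l^{\gamma})$ and $\bar{y}^{h} = T(z_{h} l^{\gamma}_{h})$. In the maximum norm I would combine $\norm{y^{\gamma} - \bar{y}}_{L^{\infty}(\Omega)} \leq C \norm{z - z_{h}}_{L^{2}(\Omega)} \leq C h^{2}$, the continuous dependence $\norm{\bar{y} - \bar{y}^{h}}_{L^{\infty}(\Omega)} \leq C \norm{l^{\gamma} - l^{\gamma}_{h}}_{L^{2}(\Omega)} \leq C h \abs{\log h}$, and the genuine finite element error $\norm{\bar{y}^{h} - y^{\gamma}_{h}}_{L^{\infty}(\Omega)} \leq C h^{2} \abs{\log h}^{2}$ from \eqref{eq:discretization_error_LInfty_linear}; the dominant middle term yields $C h \abs{\log h}$. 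The $H^{1}$-estimate is identical, the finite element contribution now being of order $h$ by the gradient bound in \eqref{eq:discretization_error_L2_linear}, which is again dominated by $C h \abs{\log h}$.

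I expect no genuine obstacle, as the argument is a mechanical transcription of Theorem~\ref{thm:error_l_gamma_h} with the improved rates plugged in. The one point demanding slight care is the duality identity in $(II)$: in contrast to the mixed case, where it rested on the Raviart--Thomas relations between the scalar and flux variables, here it must be justified through the symmetry of the conforming Galerkin bilinear form. This is immediate once one notes that both $q^{\nu}_{h}$ and $y^{p}_{h}$ belong to $Y_{h}$ and may therefore be inserted as test functions into each other's defining equations.
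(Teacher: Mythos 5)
Your proposal is correct and follows essentially the same route as the paper: the same splitting into $(I)+(II)+(III)$ with the auxiliary adjoint $q^{\nu}_{h}=G_{h}(\nu^{\gamma})$, the same $\sgn$-duality bound for $\norm{q^{\gamma}-q^{\nu}_{h}}_{L^{1}(\Omega)}$, the same treatment of $(III)$ via monotonicity of the min-function and the intermediates $\bar y = T(z_h l^{\gamma})$, $\bar y_h = G_h(z_h l^{\gamma})$, all with \eqref{eq:discretization_error_L2_linear}--\eqref{eq:discretization_error_LInfty_linear} substituted for the mixed-element estimates. The paper compresses the duality and $(III)$ steps into a reference to Theorem~\ref{thm:error_l_gamma_h}; your explicit justification of the duality identity through the symmetry of the conforming Galerkin form is exactly the detail that reference leaves implicit.
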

\begin{proof}
 With the adjoint states~$q^{\gamma}$ and~$q^{\gamma}_{h}$ 
 from~\eqref{os_my:adj_eq_weak} and~\eqref{os_my_h:adj_eq_p1}, resp.,
 and~$q^{\nu}_{h} = G_{h}(\nu^{\gamma})$, we obtain
 as in the proof of Theorem~\ref{thm:error_l_gamma_h}
 \begin{align*}
  &\phantom{\ =\ } C \norm{l^{\gamma} - l^{\gamma}_{h}}_{L^{2}(\Omega)}^{2}
  \leq \innerProd{q^{\gamma}z -
  q^{\gamma}_{h} z_{h}}{l^{\gamma}_{h} - l^{\gamma}}_{L^{2}(\Omega)} \\
  & = \innerProd{q^{\gamma}z - q^{\gamma} z_{h}}
  {l^{\gamma}_{h} - l^{\gamma}}_{L^{2}(\Omega)} +
  \innerProd{q^{\gamma}z_{h} - q^{\nu}_{h} \, z_{h}}
  {l^{\gamma}_{h} - l^{\gamma}}_{L^{2}(\Omega)} +
  \innerProd{q^{\nu}_{h} z_{h} - q^{\gamma}_{h} \, z_{h}}
  {l^{\gamma}_{h} - l^{\gamma}}_{L^{2}(\Omega)} \\
  & \defines (I) + (II) + (III),
 \end{align*}
 where, using Lemma~\ref{lem:my_bounds},
 \eqref{eq:discretization_error_L2_linear},
 \eqref{eq:discretization_error_LInfty_linear}
 and~$\bar{y} = T(z_h l^\gamma)$, $\bar{y}_{h} = T_h(z_h l^\gamma)$
 (cf. proof of Theorem~\ref{thm:error_l_gamma_h}), we now can estimate
 the three addends as follows
 \begin{align*}
  (I) & \leq \norm{q^{\gamma}}_{L^{2}(\Omega)} \norm{z - z_{h}}_{L^{2}(\Omega)}
   \norm{l^{\gamma} - l^{\gamma}_h}_{L^{\infty}(\Omega)} \leq C h^2,
   \nonumber \\ 
  (II) & \leq \norm{z_{h}}_{L^{\infty}(\Omega)} \norm{q^{\gamma} - q^{\nu}_{h}}_{L^{1}(\Omega)}
   \norm{l^{\gamma} - l^{\gamma}_{h}}_{L^{\infty}(\Omega)} \leq C h^2\abs{\log h}^2,
   \label{eq:ii} \\
  (III) & \leq C\norm{z - z_h}_{L^{2}(\Omega)} + C\norm{\bar{y} - \bar{y}_{h}}_{L^{\infty}(\Omega)}
   \leq C \left( h^2 \norm{f}_{L^{2}(\Omega)} +
   h^2 \abs{\log h}^2 \norm{z_h l^\gamma}_{L^{2}(\Omega)} \right)\nonumber \\
   & \leq C h^2 \abs{\log h}^2,\nonumber
 \end{align*}
 with~$(III)$ and the estimate
 of~$\norm{q^{\gamma} - q^{\nu}_{h}}_{L^{1}(\Omega)}$ deduced
 similarly as in the proof of Theorem~\ref{thm:error_l_gamma_h}.
 Combining the above estimates completes the proof for the control error. The bounds on the
state errors can be obtained using similar arguments as for
Theorem~\ref{thm:error_l_gamma_h}.
\end{proof}
The estimate for the overall error follows from combining
Theorems~\ref{thm:error_l_gamma} and~\ref{thm:error_l_gamma_h_p1}.
% -----------------------------------------------------------------------------
\begin{thm} \label{thm:error_overall_p1}
 Let~$l$ and~$l^{\gamma}_{h}$ be the solutions of~$\dualProblem$
 and~$\discretePwLinearProblemMY$, resp., with corresponding states~$y$
 and~$y^{\gamma}_{h}$. Then there exist
 an~$h_{0} > 0$ and for every~$\varepsilon > 0$ a~$\gamma$-
 and~$h$-independent constant~$C_{\varepsilon}$, such that for
 all~$h \in (0,h_{0})$ and all~$\gamma > 0$ we have
 \begin{align*}
  \norm{l - l^{\gamma}_{h}}_{L^{2}(\Omega)} +
  \norm{y - y^{\gamma}_{h}}_{H^{1}(\Omega)} +
  \norm{y - y^{\gamma}_{h}}_{L^{\infty}(\Omega)} \leq C_{\varepsilon}
  \left( \gamma^{-\frac{1}{2} \left(1 - \frac{d}{4}\right) + \varepsilon} +
  h \abs{\log h} \right).
 \end{align*}
 Coupling~$\gamma$ and~$h$ again in the form~$\gamma = O(h^{-\kappa})$
 to balance the error contributions, we obtain
 \begin{align*}
  \norm{l - l^{\gamma}_{h}}_{L^{2}(\Omega)} +
  \norm{y - y^{\gamma}_{h}}_{H^{1}(\Omega)} +
  \norm{y - y^{\gamma}_{h}}_{L^{\infty}(\Omega)} \leq C_{\varepsilon} h^{1 - \varepsilon}
  \begin{cases}
   \text{for } \kappa = 4, \text{ if } d = 2, \\
   \text{for } \kappa = 8, \text{ if } d = 3.
  \end{cases}
 \end{align*}
\end{thm}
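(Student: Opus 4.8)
The plan is to mirror the argument for the mixed case in Theorem~\ref{thm:error_overall}: insert the intermediate regularized solution $l^{\gamma}$ (with state $y^{\gamma}$) and split each error into a \emph{regularization} part, controlled by Theorem~\ref{thm:error_l_gamma}, and a \emph{discretization} part, controlled by Theorem~\ref{thm:error_l_gamma_h_p1}. Since $y^{\gamma}$ is the common object appearing in both estimates, the triangle inequality chains them without introducing any $\gamma$- or $h$-dependent factors, and the claimed additive bound follows directly.

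For the control error I would write $\norm{l - l^{\gamma}_{h}}_{L^{2}(\Omega)} \leq \norm{l - l^{\gamma}}_{L^{2}(\Omega)} + \norm{l^{\gamma} - l^{\gamma}_{h}}_{L^{2}(\Omega)}$ and bound the first summand by $C_{\varepsilon}\gamma^{-\frac{1}{2}(1-\frac{d}{4})+\varepsilon}$ using Theorem~\ref{thm:error_l_gamma} and the second by $Ch\abs{\log h}$ using Theorem~\ref{thm:error_l_gamma_h_p1}. Adding the two summands produces the control contribution to the asserted bound.

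For the state errors I proceed analogously. In the $L^{\infty}$-norm I split $\norm{y - y^{\gamma}_{h}}_{L^{\infty}(\Omega)} \leq \norm{y - y^{\gamma}}_{L^{\infty}(\Omega)} + \norm{y^{\gamma} - y^{\gamma}_{h}}_{L^{\infty}(\Omega)}$ and invoke the $L^{\infty}$-parts of the two theorems directly. In the $H^{1}$-norm the only point to note is that Theorem~\ref{thm:error_l_gamma} supplies an $H^{2}$-bound on $y - y^{\gamma}$; by the continuous embedding $H^{2}(\Omega)\hookrightarrow H^{1}(\Omega)$ this dominates $\norm{y - y^{\gamma}}_{H^{1}(\Omega)}$, while $\norm{y^{\gamma} - y^{\gamma}_{h}}_{H^{1}(\Omega)} \leq Ch\abs{\log h}$ is furnished by Theorem~\ref{thm:error_l_gamma_h_p1}. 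Summing the three contributions yields the stated estimate, valid for $h$ below the common threshold $h_{0}$ of the two theorems, with a constant depending on $\varepsilon$ but independent of $\gamma$ and $h$.

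Finally, for the coupling I would substitute $\gamma = O(h^{-\kappa})$, so that $\gamma^{-\frac{1}{2}(1-\frac{d}{4})+\varepsilon} = O\big(h^{\frac{\kappa}{2}(1-\frac{d}{4}) - \kappa\varepsilon}\big)$, and equilibrate this exponent against the $h\abs{\log h} \leq C_{\varepsilon}h^{1-\varepsilon}$ term: requiring $\frac{\kappa}{2}(1-\frac{d}{4}) = 1$ gives $\kappa = 4$ for $d=2$ and $\kappa = 8$ for $d=3$, reproducing the rate $h^{1-\varepsilon}$ after absorbing the logarithm into the $\varepsilon$-loss. There is no genuine analytical obstacle here, since the statement is a bookkeeping combination of the two preceding error estimates; the only thing to watch is that neither the splitting nor the embedding secretly reintroduces $\gamma$- or $h$-dependence into the constants, which it does not because each summand is controlled uniformly by its respective theorem.
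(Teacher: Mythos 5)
Your proposal is correct and matches the paper's approach exactly: the paper itself simply states that the result ``follows from combining Theorems~\ref{thm:error_l_gamma} and~\ref{thm:error_l_gamma_h_p1}'', i.e.\ the same triangle-inequality splitting at $l^{\gamma}$, $y^{\gamma}$ (with the $H^{2}\hookrightarrow H^{1}$ embedding for the regularization part of the state error) followed by the same equilibration computation for $\kappa$. Nothing to add.
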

It turns out that variational discretization with piecewise linear, continuous
finite elements yields a better approximation order for our optimal control
problem than variational discretization with lowest-order Raviart-Thomas mixed
finite elements. In order to achieve this, however, a more progressive
coupling of regularization parameter and grid size is necessary in the case
of piecewise linear, continuous finite elements. While this would mean a
better approximation of the unregularized solution, it can be a drawback
if at the same time the problems become harder to solve, cf. the numerical
examples in the following section.
\section{Numerical examples} \label{sec:num}
To approximate the solution of problem~$\dualProblem$ we apply a path-following
algorithm in the parameter~$\gamma$, which is sent to~$\infty$
(cf., e.g.,~\cite{HintermuellerKunisch2006_1}). The
subproblems~$\discreteProblemMY$ and~$\discretePwLinearProblemMY$, resp., are
solved with a semismooth Newton method, where the parameter~$\gamma$ is coupled
to~$h$ according to Theorems~\ref{thm:error_overall}
and~\ref{thm:error_overall_p1}, respectively. The semismooth Newton method is
described in the Appendix. We conclude this work with two example problems to
supplement our numerical analysis.

\textit{Example 1:}
In order to construct an example problem with known solution
(cf. Section~2.9 in \cite{Troeltzsch2010}), we add the term
\begin{align*}
 (\alpha / 2)\norm{T(z\,l) - y_{\Omega}}_{L^{2}(\Omega)}^{2}
\end{align*}
to the cost functional, where~$\alpha > 0$ and~$y_{\Omega} \in L^{2}(\Omega)$,
and change the state equation to
\begin{align*}
 -\Delta y = z \, l + e_{\Omega},
\end{align*}
with a suitable function~$e_{\Omega}  \in L^{2}(\Omega)$, see below.
To construct the exact solution we set~$\Omega = (0,1)^{2}$ and
define~$r(x) \defined \abs{x - \bar{x}}$, where~$\bar{x} = (1/2, 1/2)$.
We choose~$\tau = 0.1$, $m = 0.35$, $M = 0.45$
and~$\alpha = 1$, and define the (optimal) state
\[ y(r) =
   \begin{cases}
    -0.1,    & r \leq \frac{1}{8}, \\
    614.4 r^5 - 768 r^4 + 352 r^3 - 72 r^2 + \frac{27}{4} r - \frac{27}{80},
             & r \in (\frac{1}{8},\frac{3}{8}), \\
    0,       & r \geq \frac{3}{8}
   \end{cases}
\]
% -----------------------------------------------------------------------------
and the adjoint state
\[ q(r)  =
    \begin{cases}
     -r^2 + \frac{1}{64},    & r <    \frac{1}{8}, \\
     0,                      & r \geq \frac{1}{8}.
    \end{cases}
\]
% -----------------------------------------------------------------------------
The multiplier~$\nu$ is composed of a regular part concentrated
in~$\Omega_1 \defined B(\bar{x},1/8)$, and a part concentrated on the
boundary~$\partial \Omega_{1}$. Taking
\[ y_{\Omega}(r) =
   \begin{cases}
    -5.1,  & r <    \frac{1}{8}, \\
    y(r),  & r \geq \frac{1}{8},
   \end{cases}
\]
we obtain as action of~$\nu \in \mathscr{M}(\bar{\Omega})$ applied to an
element~$g \in C(\bar{\Omega})$
\[ \int_{\bar{\Omega}} g \ud \nu = -\int_{\Omega_{1}} g \ud x -
 \frac{1}{4} \int_{\partial \Omega_{1}} g \ud s. \]
% -----------------------------------------------------------------------------
The auxiliary state~$z$ and the corresponding load~$f$ are set to
\begin{align*}
 z(x) &= \sin(\pi x_{1}) \sin(\pi x_{2}), \quad \text{and}\\
 f(x) &= -\Delta z(x) = 2\pi^{2} \sin(\pi x_{1}) \sin(\pi x_{2}).
\end{align*}
% -----------------------------------------------------------------------------
The optimal control~$l$ is given by
\[ l(x) = \Big( P_{\left[m^4,M^4\right]} \big(3(q \circ r)(x)z(x)\big) \Big)^{-3/4}, \]
% % -----------------------------------------------------------------------------
and
\[ e_{\Omega}(x) \defined -\Delta (y \circ r)(x) - z(x)l(x). \]
% -----------------------------------------------------------------------------
\ifpdf
 \begin{figure}[pt] % solution plots for example 1
  \centering
  \includegraphics[width=5cm]{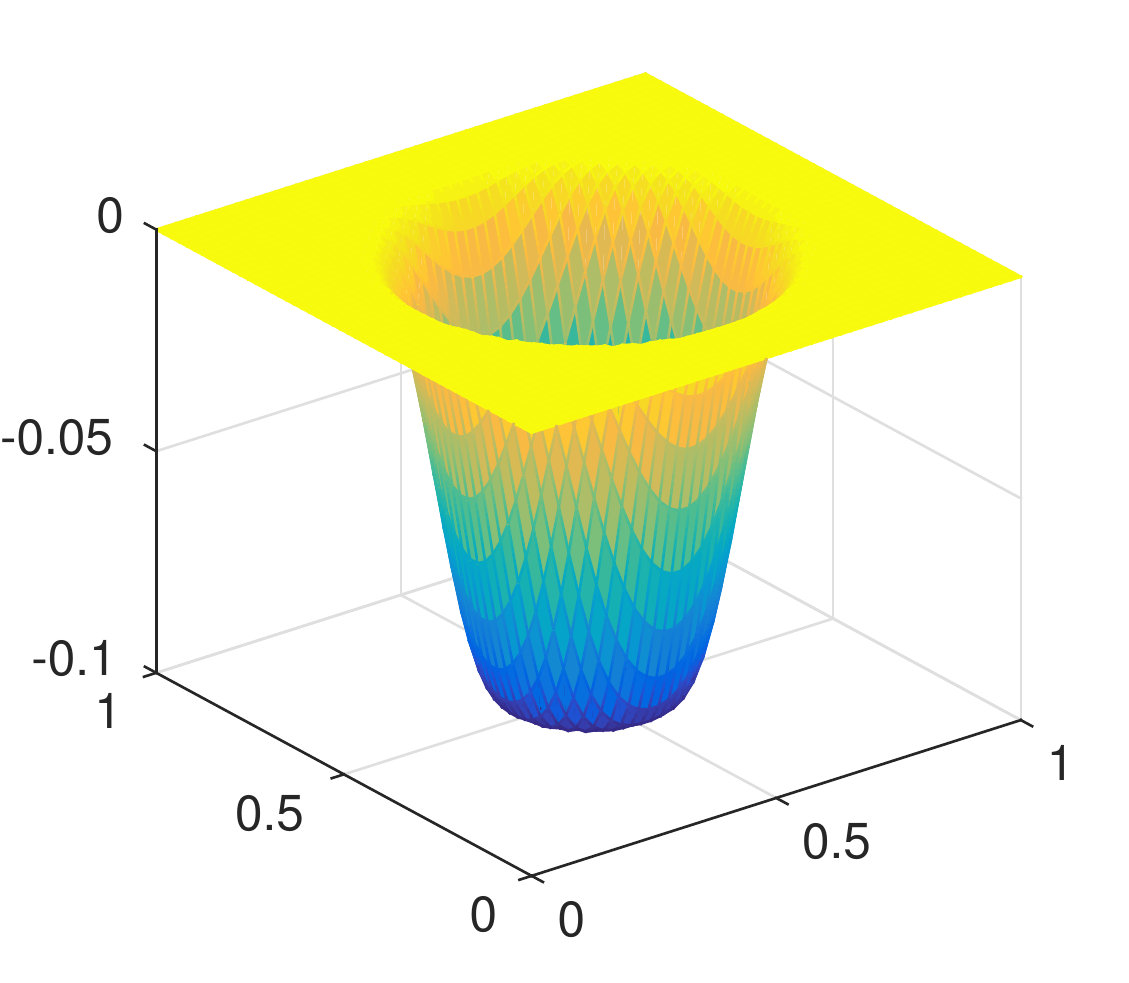}
  \includegraphics[width=5cm]{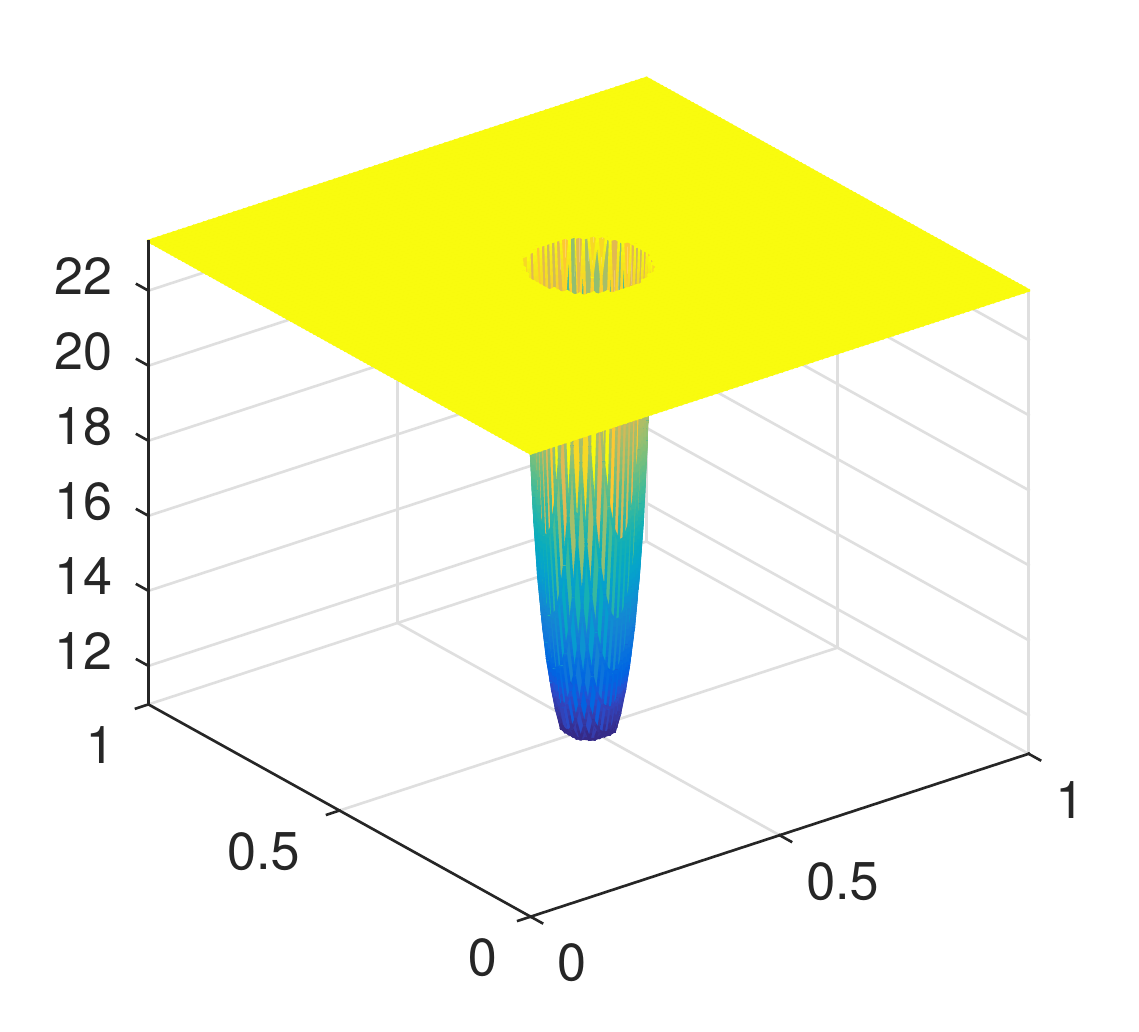}
  \includegraphics[width=5cm]{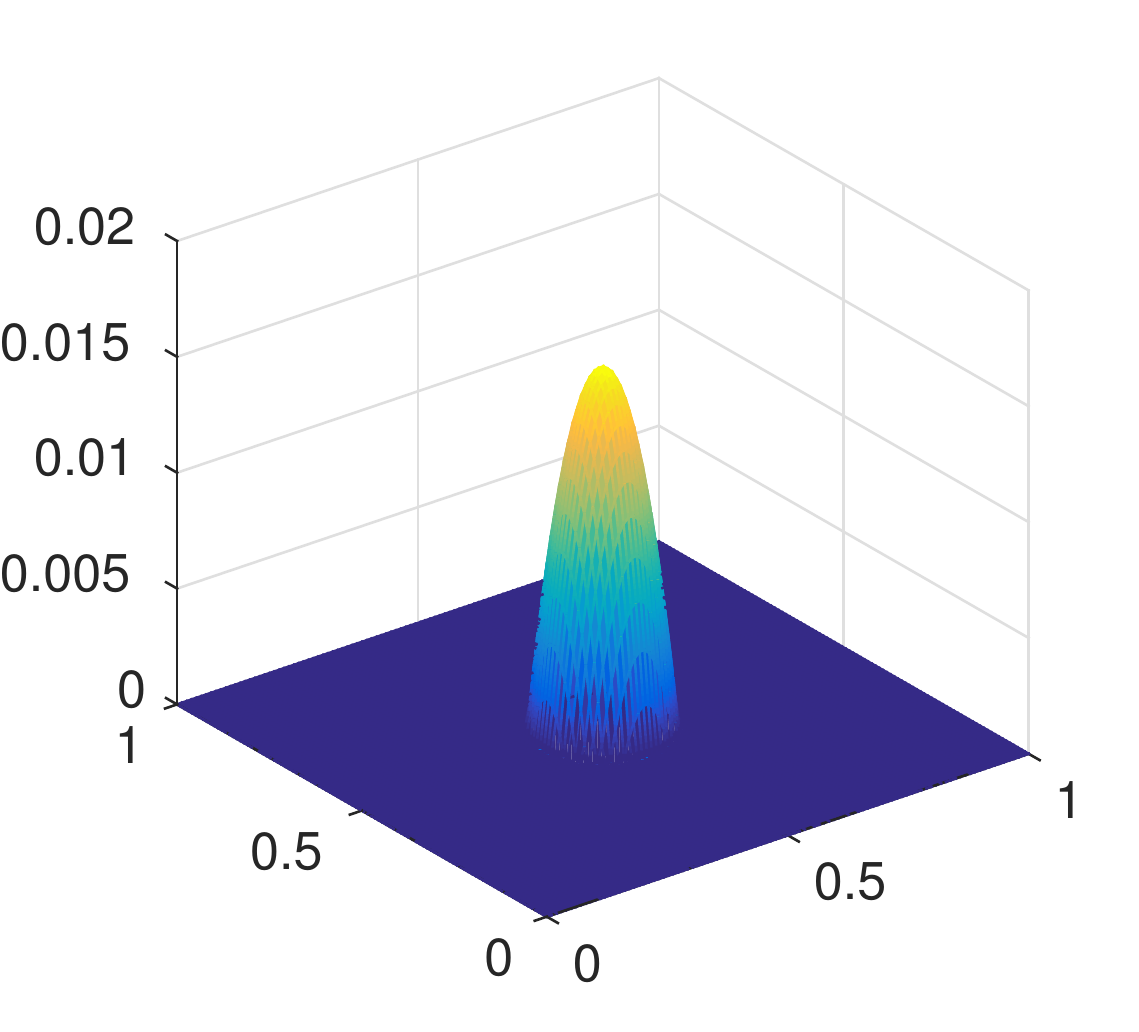}
  \\
  \includegraphics[width=5cm]{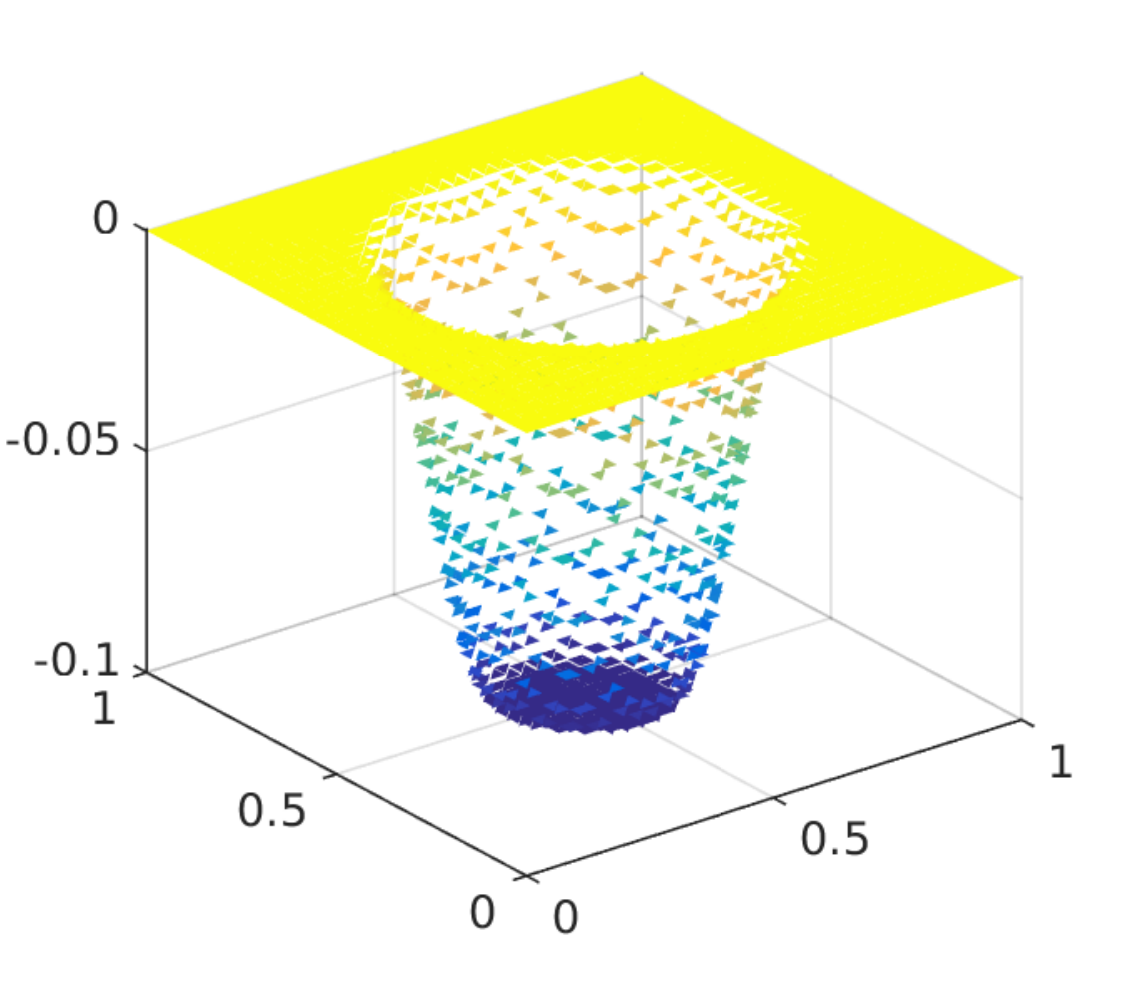}
  \includegraphics[width=5cm]{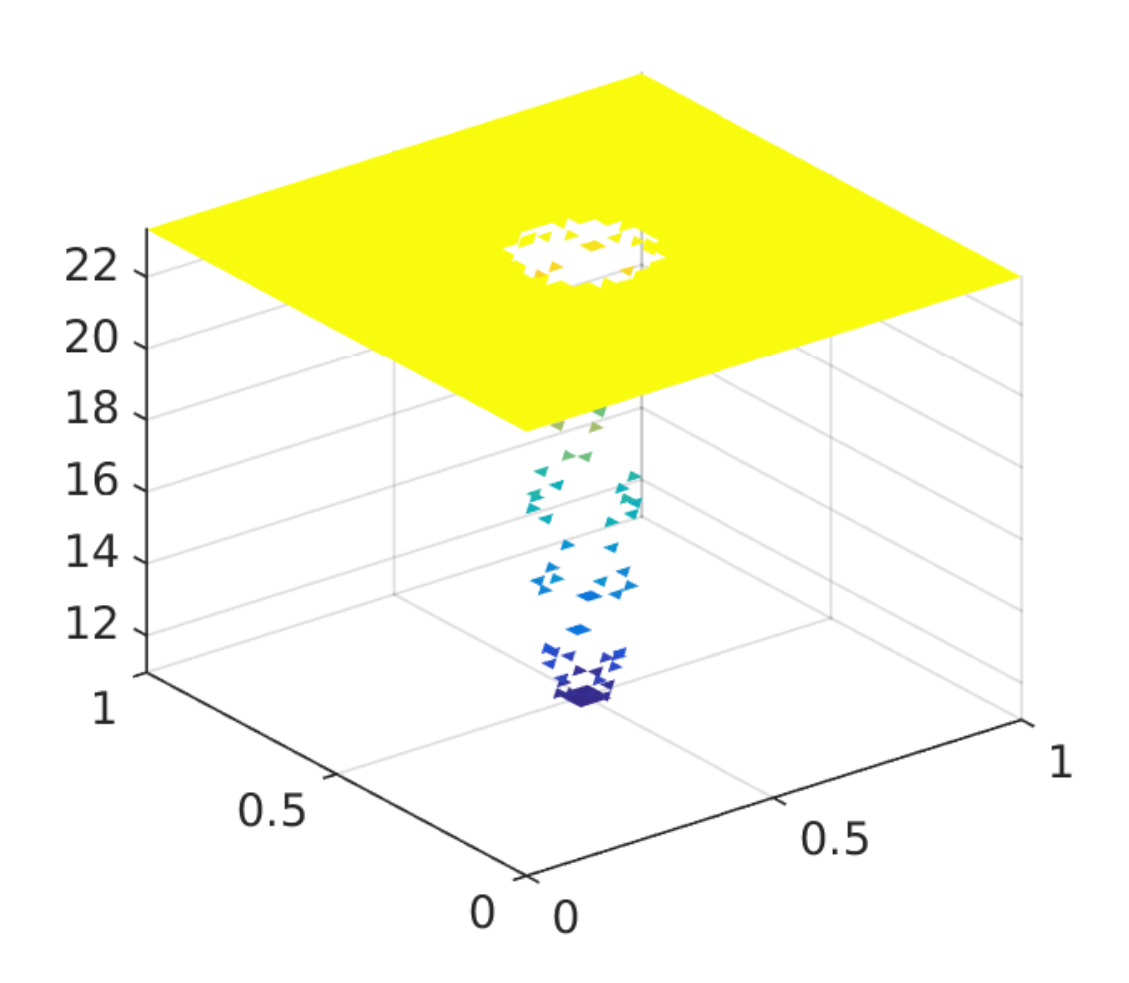}
  \includegraphics[width=5cm]{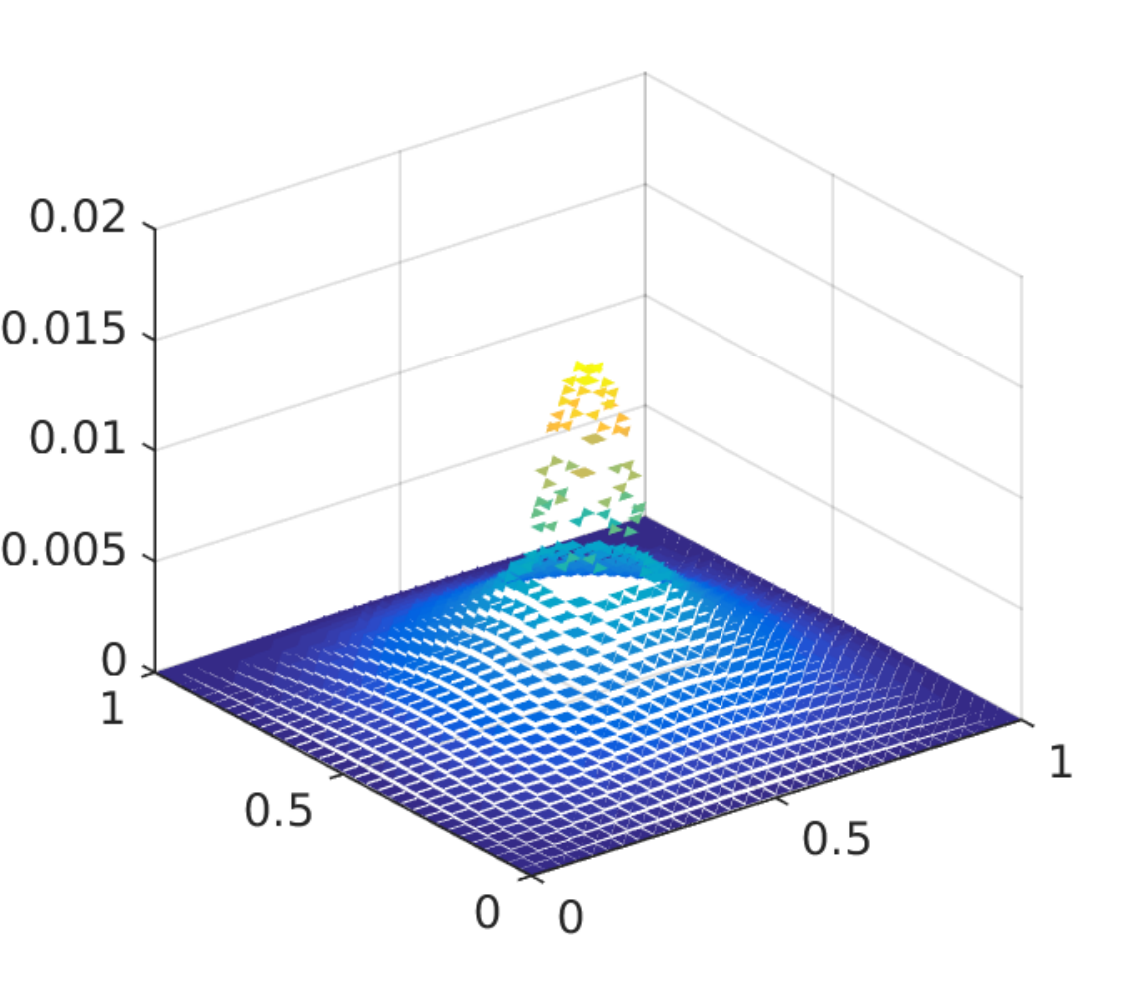}
  \caption{Optimal (top) and optimal discrete piecewise constant (bottom) states~$y$,
   $y_{h}^{\gamma}$ (left),
   controls~$l$, $l_{h}^{\gamma}$ (middle) and adjoint states~$q$, $q_{h}^{\gamma}$
   (right) with~$\gamma = 10^{8}$ and~$h = h_6$ in Example 1.}
  \label{fig:solutions_example_1}
 \end{figure}
\fi
The control and state variables are depicted in
Figure~\ref{fig:solutions_example_1}, where we set
\begin{align*}
 h_k = \sqrt{2} \cdot 2^{1 - k}, \quad k>1.
\end{align*}
In all our numerical tests we approximate integrals using a
3-point Gauß quadrature rule. In the case of piecewise linear elements, however,
we distinguish between those parts of the triangles on which the involved
projections are active and those parts where they are inactive.
% -----------------------------------------------------------------------------
\ifpdf
 \begin{figure}[t] % error plots for example 1
  \centering
  \includegraphics[height=6cm]{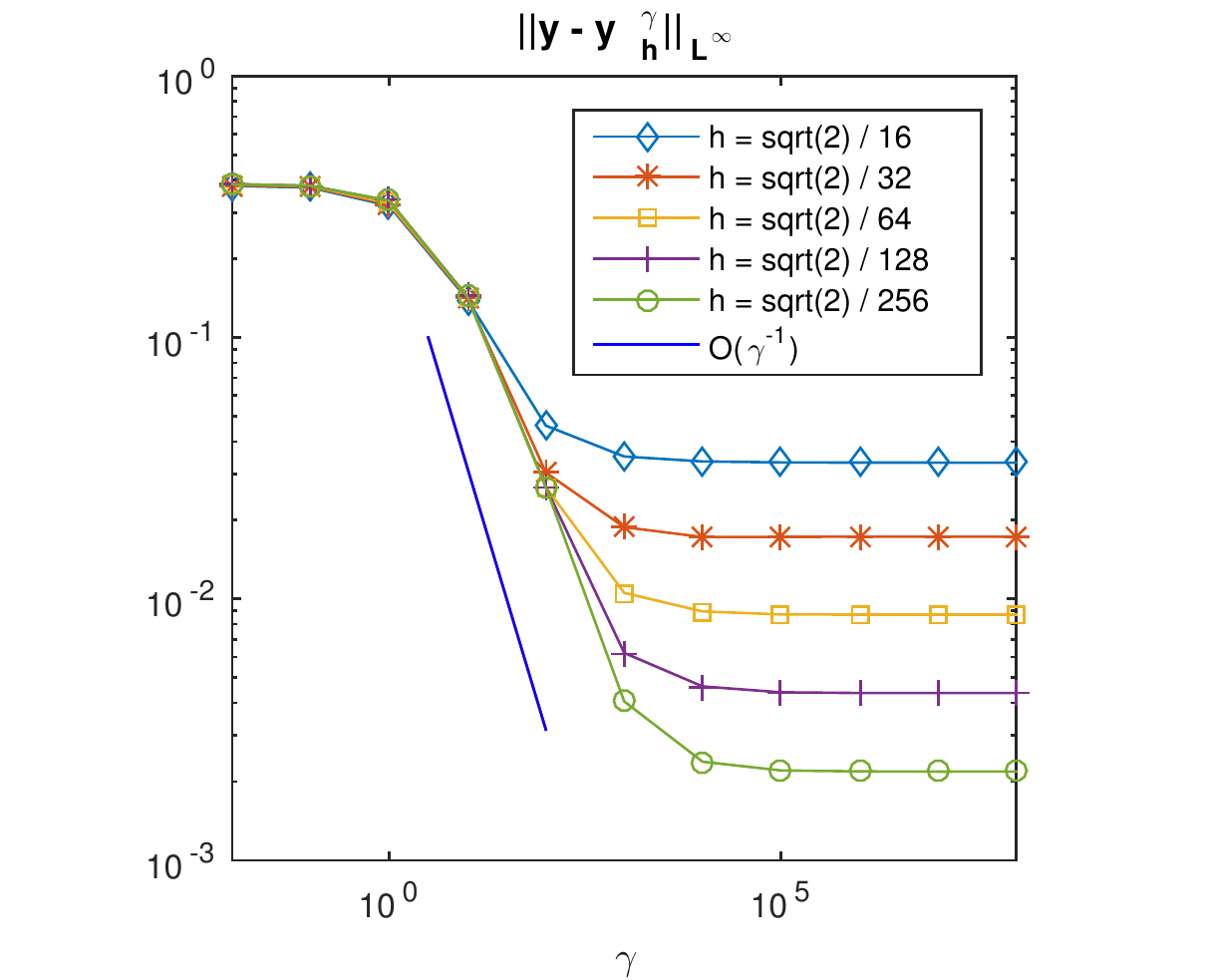}
  \includegraphics[height=6cm]{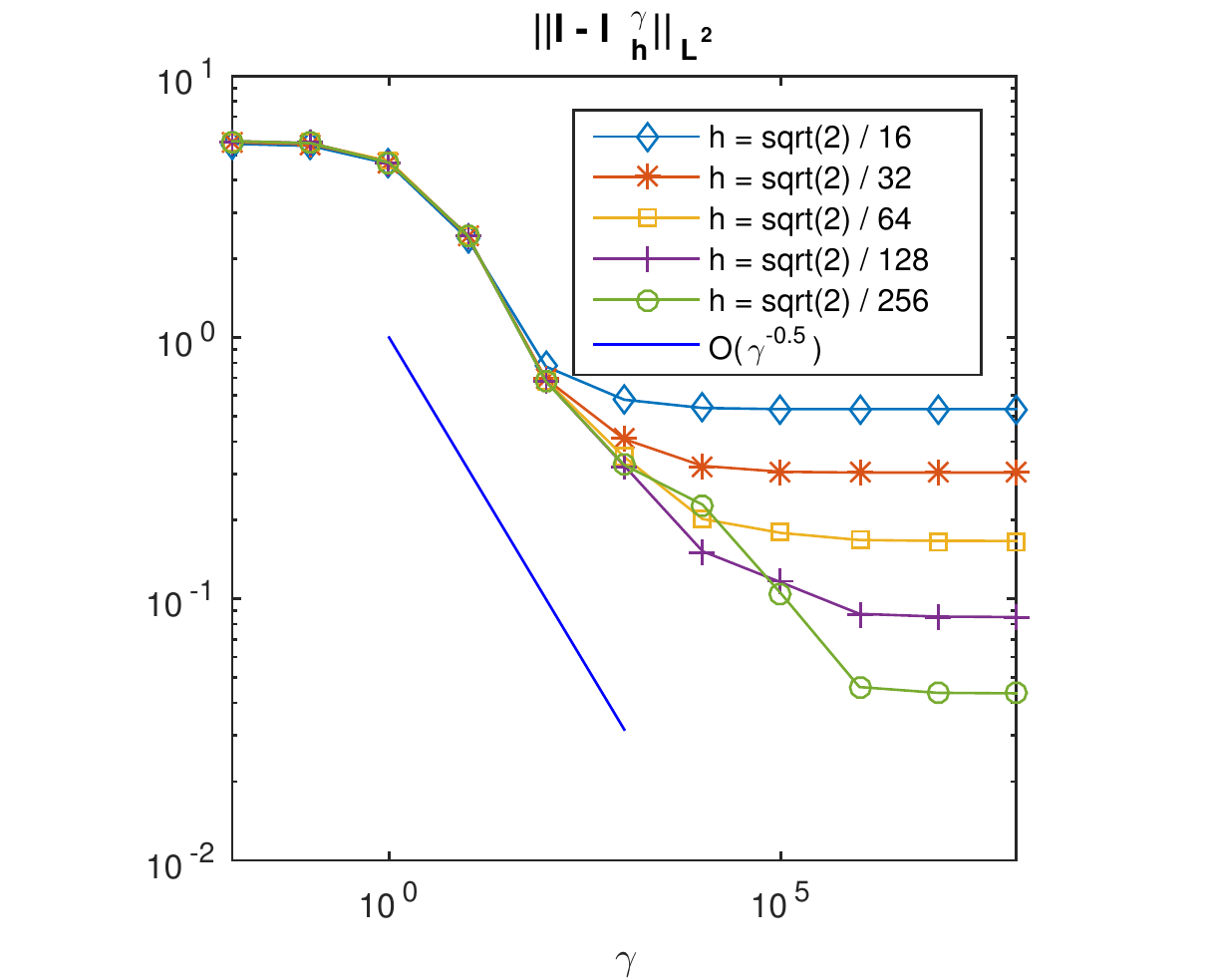}
  \caption{Errors in the states~$y_{h}^{\gamma}$ (left) and
   controls~$l_{h}^{\gamma}$ (right) plotted
   against~$\gamma$ for different values of~$h$ in Example~1 with piecewise
   constant state approximation.}
  \label{fig:test_errors_example_1}
 \end{figure}
\fi
% -------------------------------------------------------------------------

Let us begin with the mixed state approximation.
Table~\ref{tab:errors_example_1} contains $L^\infty(\Omega)$-errors of the 
state and $L^2(\Omega)$-errors of the control variables for a run of the
path-following Algorithm~\ref{alg:path-following},
using the coupling~$\gamma = O(h^{-2})$, starting with~$\gamma_4 = 400$
at~$h = h_4$. We observe errors of the size~$O(h)$, or
equivalently~$O(\gamma^{-1/2})$, which is twice the rate
predicted by Theorem~\ref{thm:error_overall_p1}.
Figure~\ref{fig:test_errors_example_1} shows the development of
the errors in the state and the control variables, respectively,
over a large range of regularization parameters. It can be seen from the
graphs that the discretization error for both variables is 
approximately of order one, which explains the above convergence rate of
size~$O(h)$, and is the square of
the expected error bound derived in the previous section.
Furthermore, the regularization error appears to be of
order~$O(\gamma^{-1})$ for the states and between~$O(\gamma^{-0.3})$
and~$O(\gamma^{-0.5})$ for the controls, while
Theorem~\ref{thm:error_l_gamma} predicts an order of~$O(\gamma^{-0.25})$.
Computation on a series of random
unstructured meshes with grid sizes in the range of the considered uniform
meshes results in the same convergence rates, and thus rules out
superconvergence effects. The observed convergence rates might be
explained by the high regularity of the constructed solution.
% -------------------------------------------------------------------------
\begin{table}[t]
 \caption{Errors and corresponding experimental orders of
  convergence~EOC for the piecewise constant states~$y_{h}^{\gamma}$ and
  controls~$l_{h}^{\gamma}$ in Example 1. 
  Parameters are coupled via~$\gamma = O(h^{-2})$,
  $\gamma_4 = 400$.}
 \begin{center}
  \begin{tabular}[\textwidth]{ccccccccccc}\toprule
   $h_k$ && $\norm{y - y^\gamma_h}_{L^\infty}$ & $\mathrm{EOC}_{y}$ &
   $\norm{l - l^\gamma_h}_{L^2}$ & $\mathrm{EOC}_{l}$ \\ \midrule
   $h_5$ && 3.45e$-$2         &  --       & 5.66e$-$1         &  --  \\
   $h_6$ && 1.74e$-$2         & 0.99      & 3.32e$-$1         & 0.77 \\
   $h_7$ && 8.80e$-$3         & 0.98      & 1.82e$-$1         & 0.87 \\
   $h_8$ && 4.39e$-$3         & 1.00      & 9.47e$-$2         & 0.94 \\
   $h_9$ && 2.19e$-$3         & 1.00      & 4.83e$-$2         & 0.97 \\ \bottomrule
  \end{tabular}
 \end{center}
 \label{tab:errors_example_1}
\end{table}
% -------------------------------------------------------------------------
\ifpdf
 \begin{figure}[t] % error plots for example 2
  \centering
  \includegraphics[height=6cm]{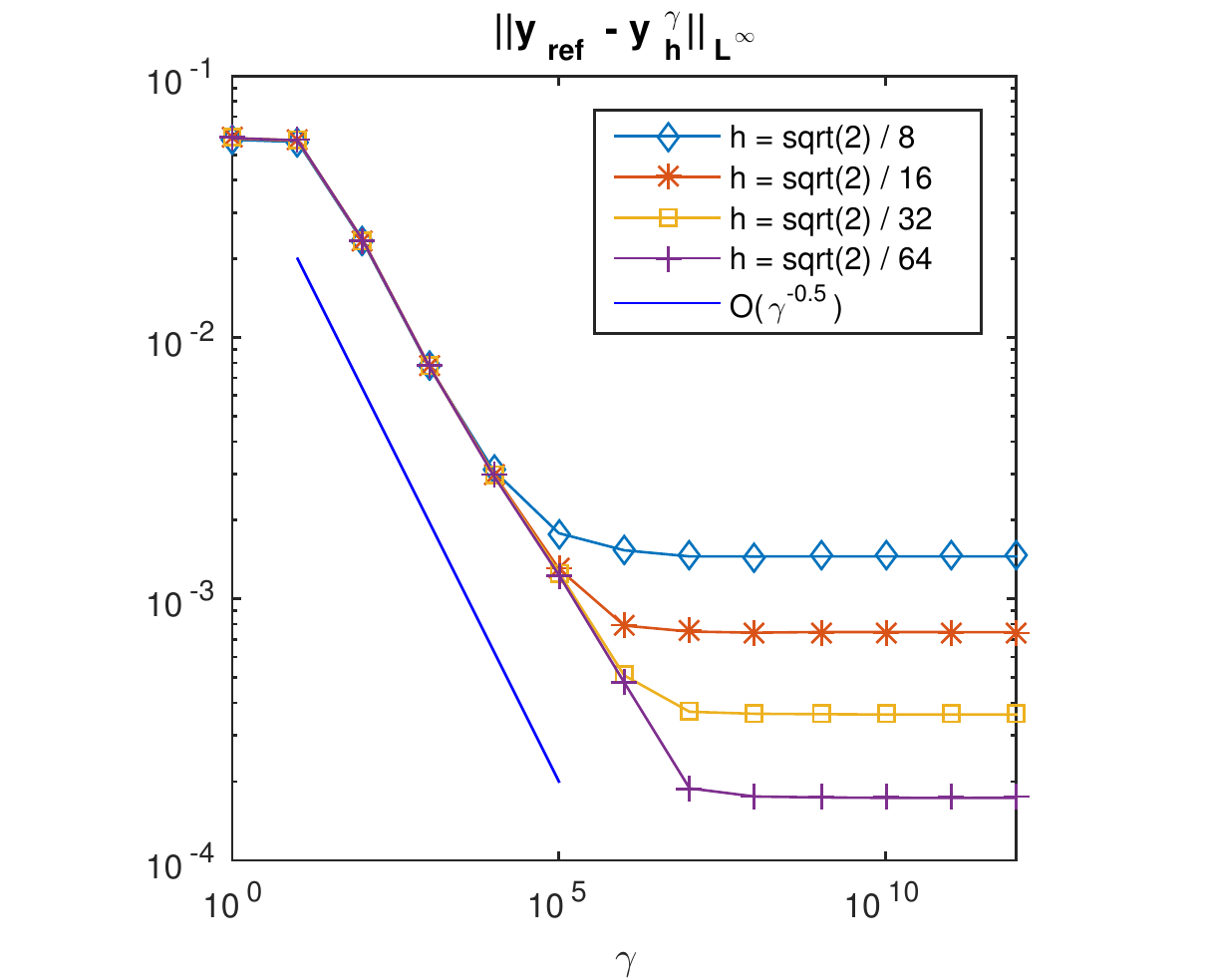}
  \includegraphics[height=6cm]{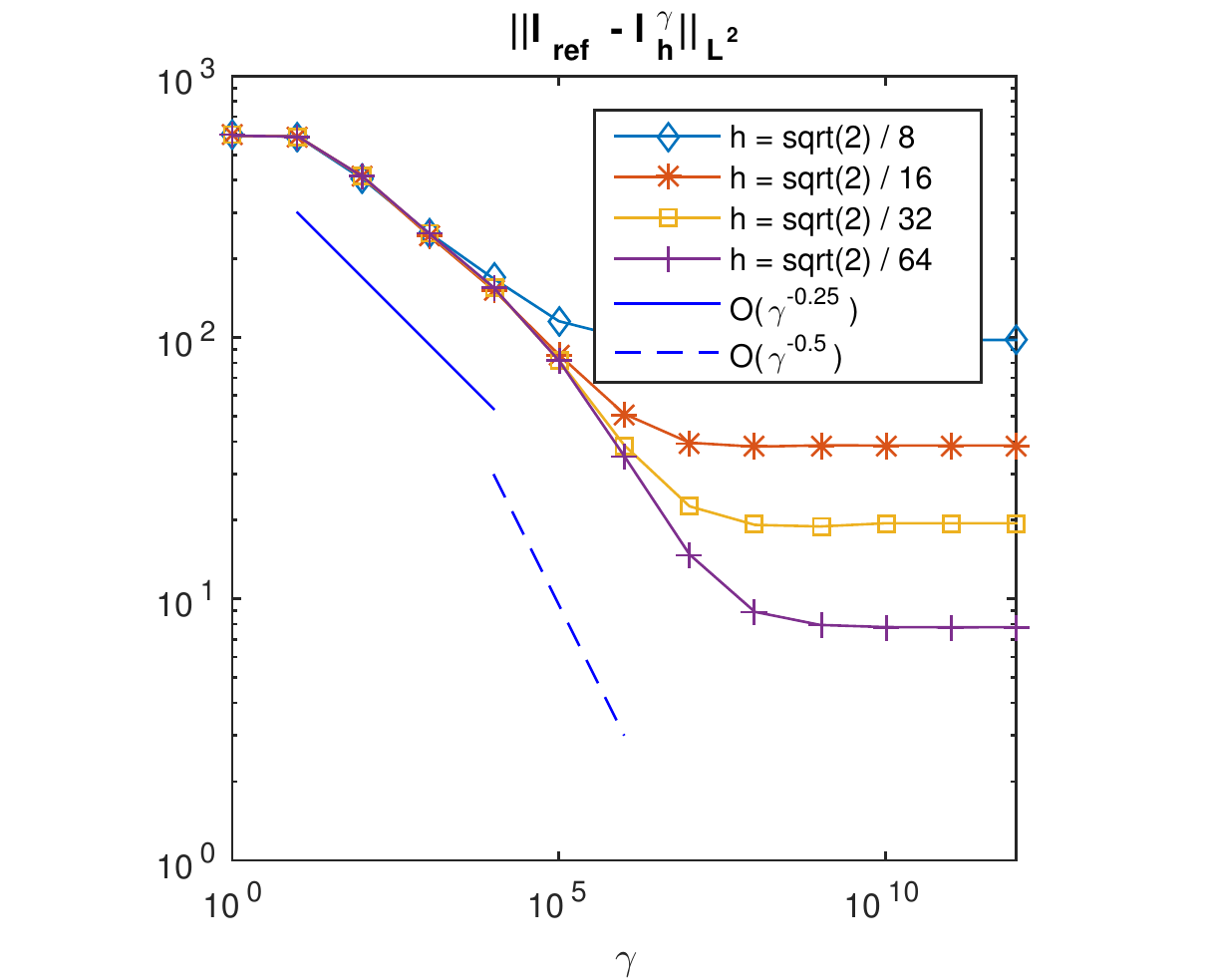}
  \caption{Errors in the states~$y_{h}^{\gamma}$ (left) and
   controls~$l_{h}^{\gamma}$ (right) plotted
   against~$\gamma$ for different values of~$h$ in Example~2 with piecewise
   constant state approximation.}
  \label{fig:test_errors_example_2}
 \end{figure}
\fi
%-------------------------------------------------------------------------
\begin{table}[t]
 \caption{Errors relative to the reference solution~$(y_{\text{ref}},l_{\text{ref}})$ on grid~$h_{9}$
  and corresponding experimental orders of
  convergence for the states~$y_{h}^{\gamma}$ and
  controls~$l_{h}^{\gamma}$ in Example~2.
  Parameters are coupled via~$\gamma = O(h^{-2})$,
  $\gamma_4 = 400$, and~$\gamma = O(h^{-4})$, $\gamma_4 = 16$,
  in the mixed and the piecewise linear, continuous case, resp.}
 \begin{center}
  \begin{tabular}[\textwidth]{ccccccccccccc}\toprule
   && \multicolumn{2}{c}{p.w.\ constant ansatz}
   && \multicolumn{3}{c}{p.w.\ linear ansatz} \\
   \cmidrule(lr){3 - 4} \cmidrule(lr){6 - 8}
   $h_k$ && $\norm{y_{\text{ref}} - y^\gamma_h}_{L^\infty}$ &
   $\norm{l_{\text{ref}} - l^\gamma_h}_{L^2}$
    && $\norm{y_{\text{ref}} - y^\gamma_h}_{L^\infty}$ &
    $\norm{y_{\text{ref}} - y^\gamma_h}_{H^1}$
    & $\norm{l_{\text{ref}} - l^\gamma_h}_{L^2}$ \\ \midrule
   $h_4$ && 1.22e$-$2         & 3.04e+2
         &&         3.82e$-$2 &         9.95e$-$2 &         5.81e+2 & \\
   $h_5$ && 6.30e$-$3         & 2.24e+2
         &&         1.42e$-$2 &         3.42e$-$2 &         3.61e+2 &       \\
   $h_6$ && 3.54e$-$3         & 1.71e+2
         &&         3.80e$-$3 &         1.21e$-$2 &         2.01e+2 &       \\
   $h_7$ && 2.09e$-$3         & 1.23e+2
         &&         1.29e$-$3 &         4.58e$-$3 &         9.81e+1 &       \\
   \cmidrule(lr){3 - 4} \cmidrule(lr){6 - 8}
         && 0.96 & 0.44 && 1.43 & 1.54 & 0.69 \\
         && 0.83 & 0.39 && 1.90 & 1.50 & 0.85 \\
         && 0.76 & 0.48 && 1.56 & 1.40 & 1.03 \\
   \bottomrule
  \end{tabular}
 \end{center}
 \label{tab:errors_example_2}
\end{table}
%-------------------------------------------------------------------------

With the path-following Algorithm~\ref{alg:path-following}
in the appendix 3 to 4 Newton steps are needed to compute the
numerical solution, where we use the tolerance~$10^{-3}$. This result is
achieved independent of the grid size of the underlying mesh and thus
indicates mesh-independence of the algorithm.

Using piecewise linear, continuous state approximations,
however, we in this example are not able to sufficiently progress in the
regularization parameter. Even for small values of~$\gamma$
and with damped Newton steps the semismooth Newton iteration failed to 
converge. This may be
due to large slopes contributed by the term~$e_{\Omega}$ and by the jump
in~$y_{\Omega}$. Similar observations are reported
in~\cite{GuentherHinze2011}, where an interior point solver is used to
treat gradient constraints in elliptic optimal control.
There a jump in the exact control leads to oscillations in the discrete
approximation with piecewise linear, continuous finite elements,
which results in convergence problems for the solver.

\textit{Example 2:}
Here we consider problem~$\primalProblem$ with parameters~$\tau = 0.01$,
$m = 0.1$ and~$M = 0.2$, so that the physical assumptions of a thin plate are
satisfied. We again set~$\Omega = (0,1)^{2}$ and define the load
\begin{align*}
 f(x_{1},x_{2}) \defined
 \begin{cases}
  -0.04, & x_{1} \leq \frac{1}{2} \\
  \phantom{-}0.01, & x_{1} > \frac{1}{2}.
 \end{cases}
\end{align*}
We consider the numerical solutions for~$h=h_{9}$ as reference solutions.
  
Figure~\ref{fig:test_errors_example_2} shows the behaviors
of the state and control errors in the piecewise constant case.
The large magnitudes of the errors in the dual control~$l \defined u^{-3}$
stem from the small bounds on the primary control~$u$.
In this example the regularization error in the
control exhibits two consecutive convergence behaviors: Up
to~$\gamma = 10^{5}$ we observe the theoretically derived
order~$O(\gamma^{-0.25})$, which for larger values of~$\gamma$ improves
to~$O(\gamma^{-0.5})$.
In this example the piecewise linear, continuous state approximation works well with our path-following Algorithm~\ref{alg:path-following}.
In Table~\ref{tab:errors_example_2} we report our numerical findings
for the state and control variables
with piecewise constant and piecewise linear and continuous state
approximation, resp.
The parameters are coupled according to Theorems~\ref{thm:error_overall}
and~\ref{thm:error_overall_p1}, i.e., $\gamma = O(h^{-2})$,
 starting with $\gamma = 400$ on~$h_4$, and~$\gamma = O(h^{-4})$, starting with $\gamma = 16$ on~$h_4$.
With this coupling, the errors in the controls are roughly of the predicted orders, whereas the
state errors seem to converge at a faster rate. Large slopes now occur in the dual control~$l$, due to the small
control constraints on~$u$ and the asymptotically singular behaviour of the adjoint
state near the state active set, which reduces to a point in the limit~$\gamma \to \infty$.
% ------------------------------------------------------------------------
\ifpdf
 \begin{figure}[tp] % solution plots for example 2
  \centering
  \includegraphics[width=7cm]{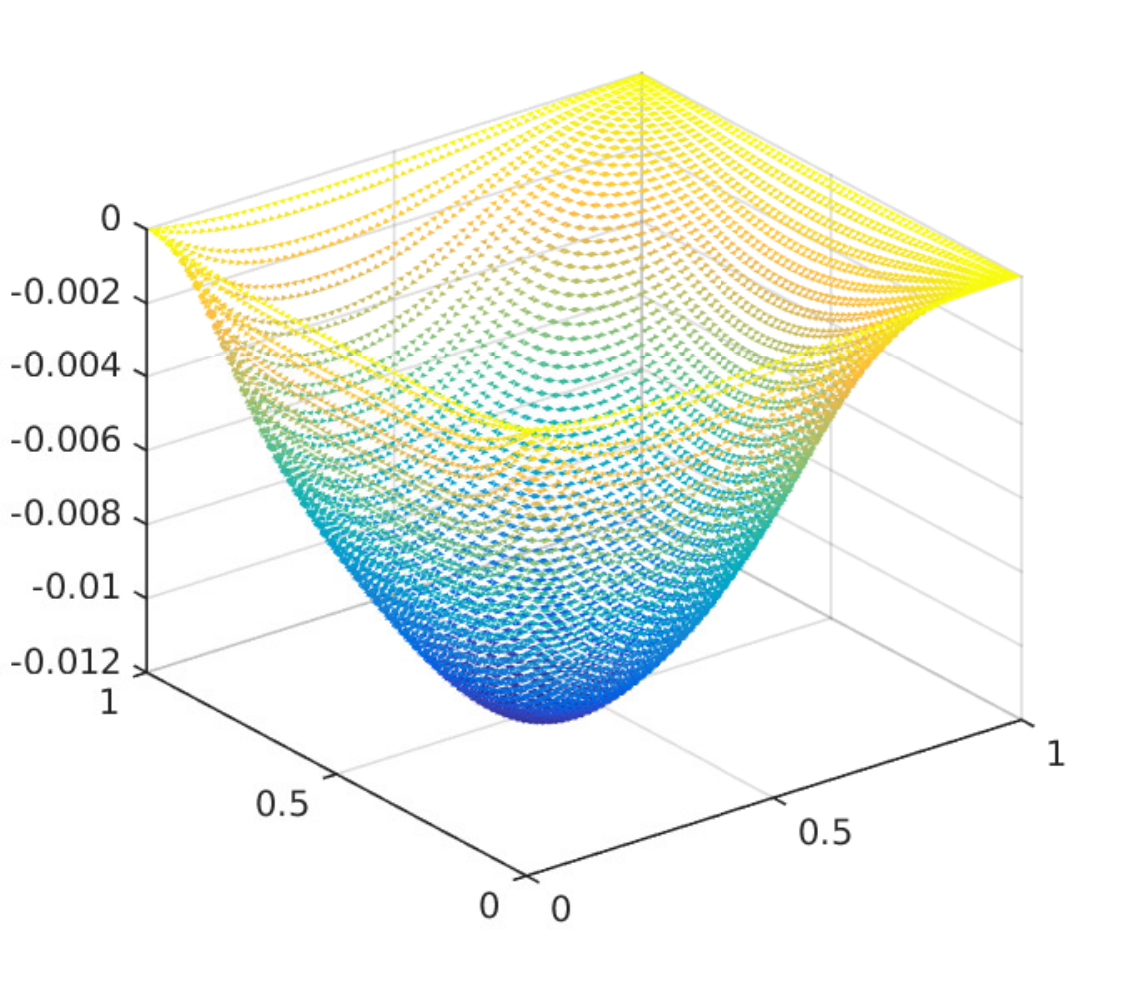}
  \includegraphics[width=7cm]{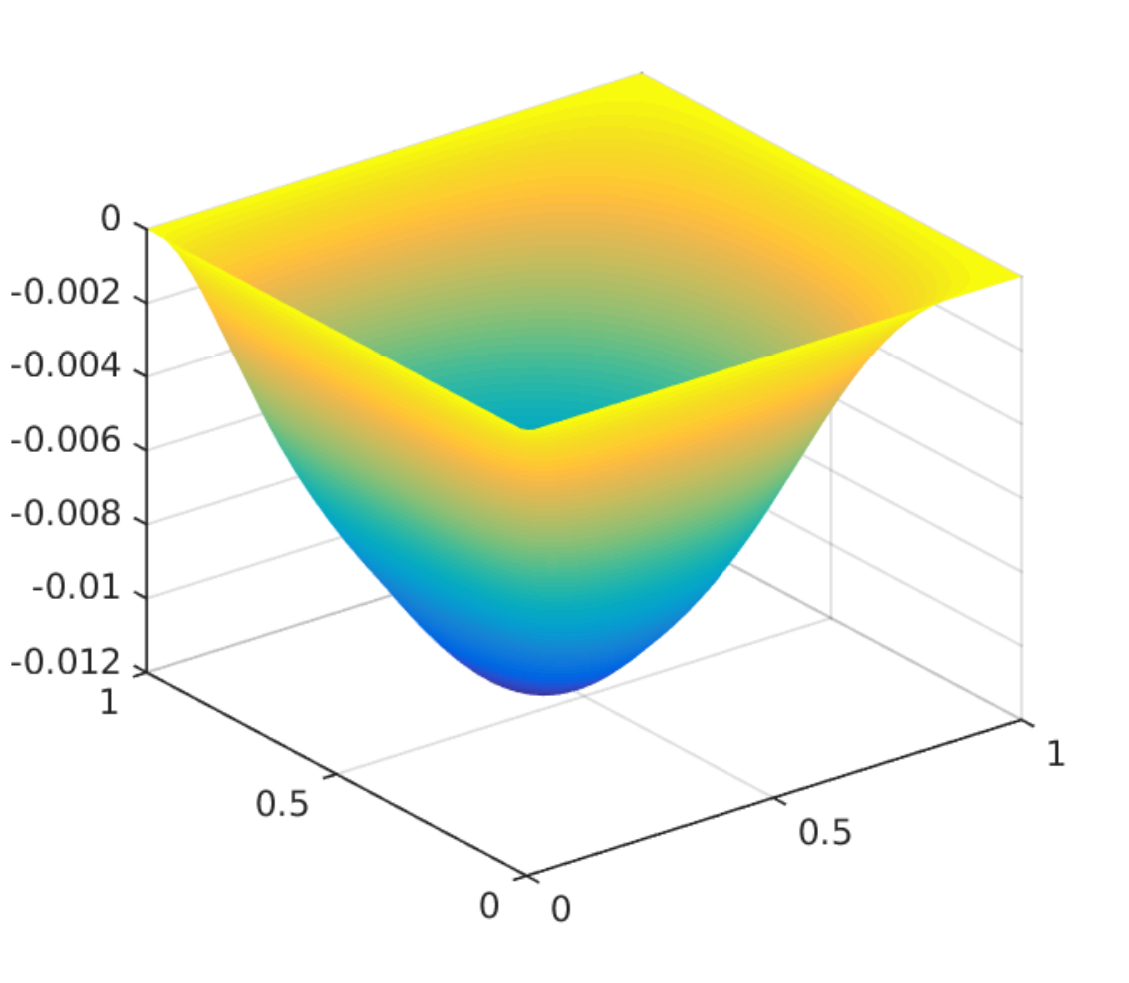}
  \\
  \includegraphics[width=7cm]{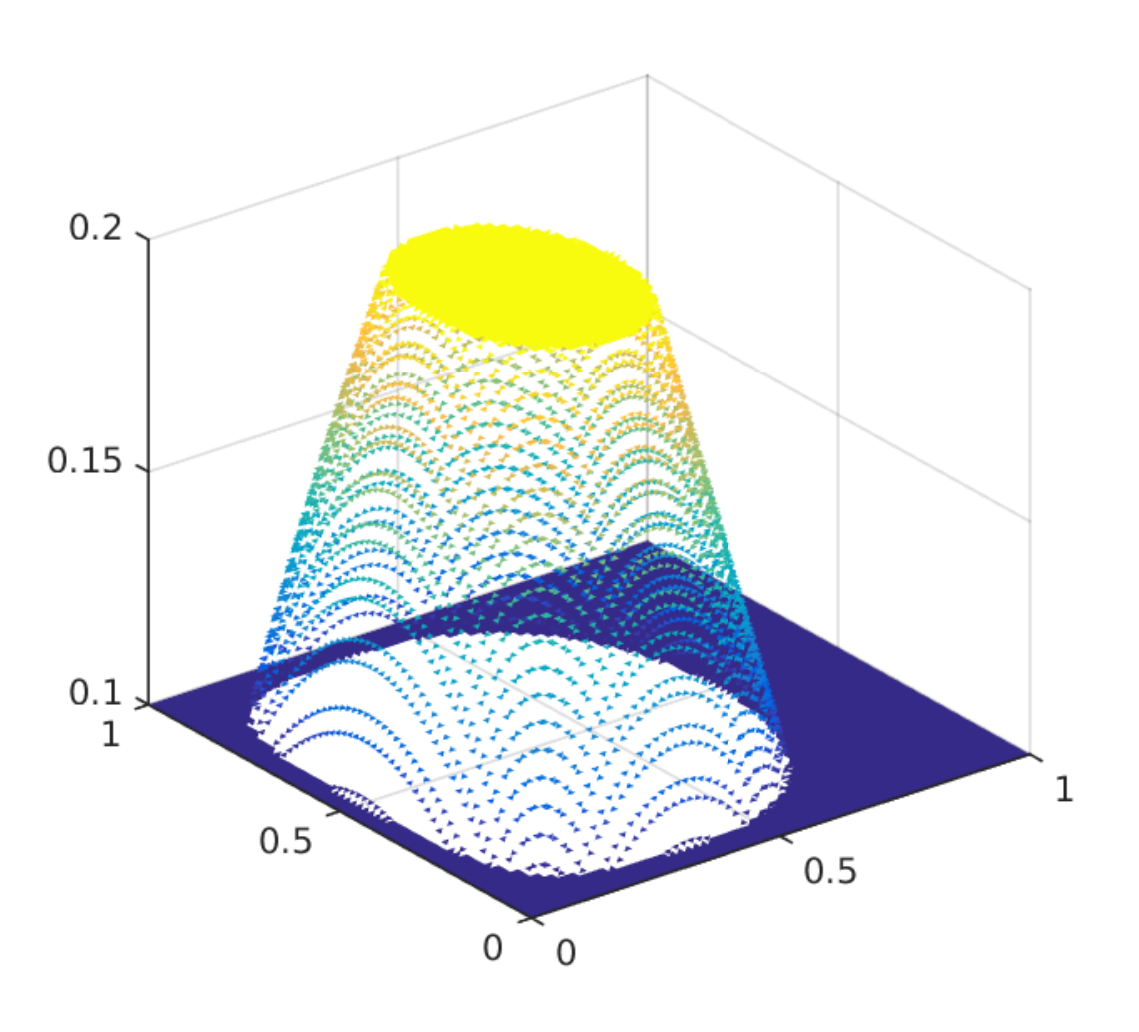}
  \includegraphics[width=7cm]{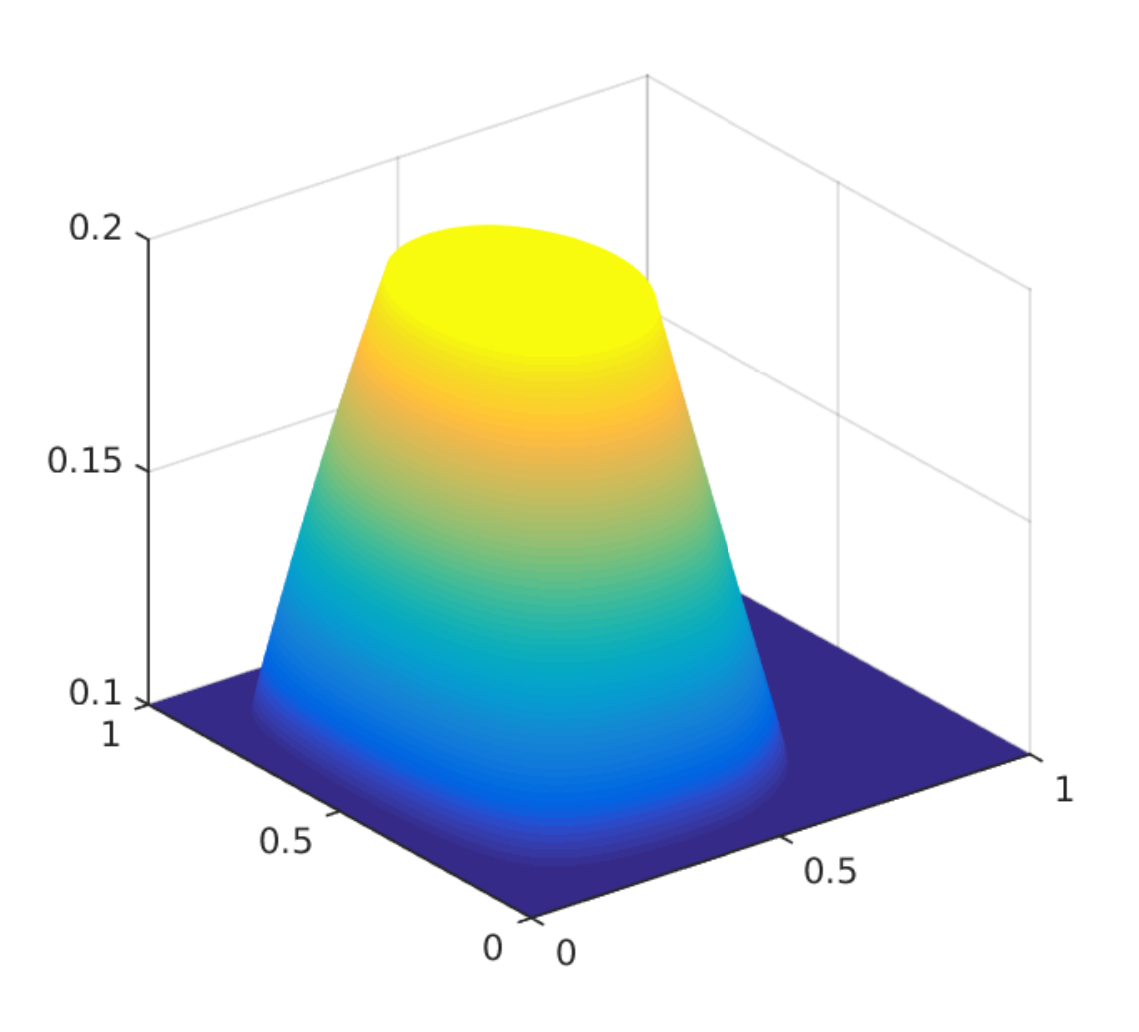}
  \caption{Final states~$y_{h}^{\gamma}$ (top row)
   and corresponding controls~$u_{h}^{\gamma}$ (bottom row) for~$h = h_7$
   in the path-following runs of Example~2. Piecewise constant (left) and
   piecewise linear and continuous (right) state
   approximation for~$\gamma = \text{2.5600e+4}$ and~$\gamma = \text{6.5536e+4}$, resp.}
  \label{fig:test3_1}
 \end{figure}
\fi
% ------------------------------------------------------------------------
\ifpdf  
 \begin{figure}[tp] % control active set boundaries for example 2
  \centering
  \includegraphics[width=7cm]{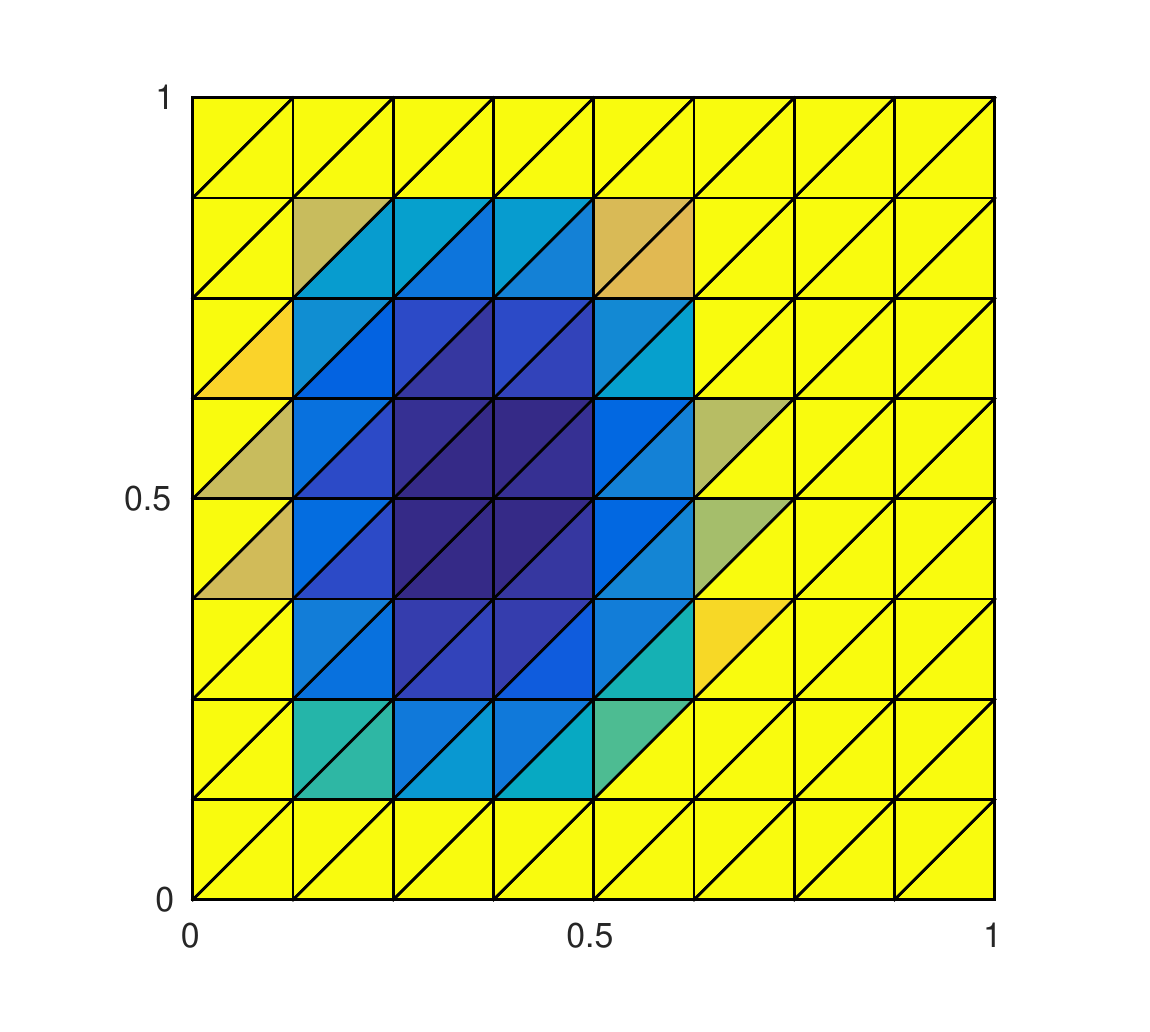}
  \includegraphics[width=7cm]{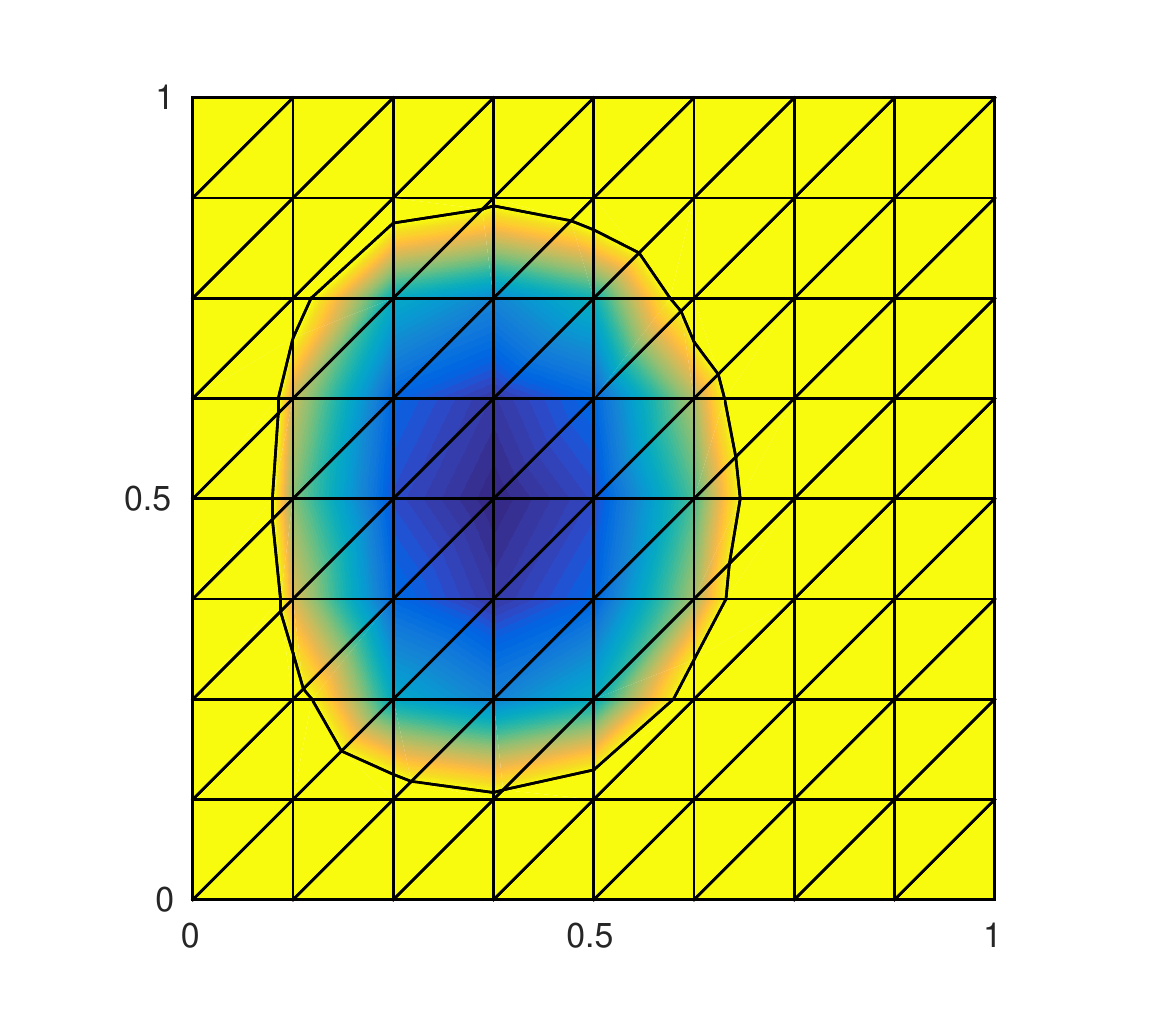}
  \caption{Comparison of the upper active set boundaries of the control~$l$
  in Example~2 
   with~$h = h_5$ and~$\gamma = 500$ in the Raviart-Thomas (left) and the
   piecewise linear, continuous case (right).}
  \label{fig:test_ctrl_activesets_bdry}
 \end{figure}
\fi

Figure~\ref{fig:test3_1} displays the resulting approximations to the optimal
state~$y$ and control~$u$ on the grid with~$h = h_7$. Similar to the
experiment in~\cite[Figure~4-1]{ArnautuLangmachSprekelsTiba2000} with
changing sign of the load,
we observe a bang-bang-like control with minimal thickness along the
boundary.

Let us comment on the active set shapes of the variationally discretized
control variable~$l$, compare the discussion in
Subsection~\ref{subsec:pwconstant_vs_pwlinear}. Since the approximation of the
state with $RT_0$-elements yields piecewise constant control
approximations the boundary of the active set in this case follows the
finite element mesh.
For piecewise linear, continuous states the control active set is generally
bounded by piecewise, non-degenerate hyperbolas.
In Figure~\ref{fig:test_ctrl_activesets_bdry} this is
depicted for
the numerical solutions of Example~2 with~$\gamma = 500$ on the mesh
with~$h = h_5$. As expected, in the piecewise linear,
continuous case the boundary of
the upper active set is already well approximated on this coarse mesh,
given the rather small
regularization parameter.

% --- appendices --------------------------------------------------------------

\setcounter{equation}{0}
\renewcommand{\theequation}{A.\arabic{equation}}

\appendix
\section*{Appendix A:\ \ \ Semismooth Newton method} \label{app_a:newton}
In order to solve the subproblems~$\discreteProblemMY$
and~$\discretePwLinearProblemMY$ via a semismooth Newton
method~(cf., e.g., \cite{HintermuellerItoKunisch2003}), we rewrite the
system of primal and adjoint
equations of the associated optimality 
systems, \eqref{os_my_h:prim_eq}--\eqref{os_my_h:adj_eq}
and~\eqref{os_my_h:prim_eq_p1}--\eqref{os_my_h:adj_eq_p1}, respectively,
in the matrix form
\begin{align} \label{eq:Dgammah_matrix_form}
        F^{\gamma}(\mathbf{x^{\gamma}}) \defined        \left(
        \begin{array}{cc}
         A     & k_{1}\\
         k_{2}^{\gamma} & A \\
        \end{array}
        \right)
        (\mathbf{x^{\gamma}})
        = 0,
\end{align}
where the variational inequality, in the equivalent form of the projection
formula, has been substituted for the control in the primal equation. The matrix~$A$
denotes the according finite element system matrix of the respective
discretization ansatz, whereas the
operators~$k_1$ and~$k_2^{\gamma}$ result from the right-hand sides of the primal and
adjoint equations, respectively. Due to the underlying projections these
operators will not be Fr\'echet-differentiable.
Similarly to~\cite{GuentherTber2009} we define the following generalized
derivative of~$F^{\gamma}$ as
\begin{align*}
 DF^{\gamma}(\mathbf{x}) \defined
 \left(
 \begin{array}{cccc}
  A                  & Dk_{1}(\mathbf{x})\\
  Dk_{2}^{\gamma}(\mathbf{x}) & A \\
 \end{array}
 \right),
\end{align*}
with~$D$ denoting the generalized derivative of Clarke~(cf.~\cite{Clarke1983}).
% -----------------------------------------------------------------------------

In the Raviart-Thomas case we use from~\cite{BahriawatiCarstensen2005} the
Matlab code~\emph{EBmfem} in order to discretize the Poisson
equations and refer the reader to this reference for further details. With~$m$ and~$n$
denoting the numbers of edges and elements of the triangulation,
let then~$\mathbf{x^{\gamma}} = (\mathbf{v}_{h}^{\gamma}, y_{h}^{\gamma},
\mathbf{v}_{q,h}^{\gamma}, q_{h}^{\gamma})^{T} \in \dsR^{2mn}$ be the combined
primal and dual state vector, and~$\mathbf{x} = (\mathbf{v}_{h}, y_{h},
\mathbf{v}_{q,h}, q_{h})^{T} \in \dsR^{2mn}$ another vector. We have
\begin{align*}
 k_1(\mathbf{v}_{q,h}^{\gamma}, q_{h}^{\gamma}) &\defined \left( k_1(q_{h}^{\gamma},T_{j})
  \right)_{j = 1,\ldots,n}, \\
 k_2^{\gamma}(\mathbf{v}_{h}^{\gamma}, y_{h}^{\gamma}) &\defined
  \left( k_2^{\gamma}(y_{h}^{\gamma},T_{j})
  \right)_{j = 1,\ldots,n},
\end{align*}
where
\begin{align}
 k_1(q_{h}^{\gamma},T) &\defined
 \begin{cases}  \label{eq:newton_k}
  z_{h|T} (3\,q_{h|T}^{\gamma}\,z_{h|T})^{-3/4}\abs{T}, & T \in i(q_{h}^{\gamma}),\\
  M^{-3} \abs{T} z_{h|T},                               & T \in l(q_{h}^{\gamma}),\\
  m^{-3} \abs{T} z_{h|T},                               & T \in u(q_{h}^{\gamma}),
 \end{cases}\\
 k_2^{\gamma}(y_{h}^{\gamma},T) &\defined
 \begin{cases} \nonumber
  \gamma ( y_{h|T}^{\gamma} + \tau) \abs{T}, & T \in a(y_{h}^{\gamma}),\\
  0,                                         & \text{else},
 \end{cases}
\end{align}
with the control and state active and inactive sets
\begin{align*}
 i(q_{h}^{\gamma}) &\defined \Set{T \in \mathcal{T}_{h} |
  m^{4} < 3 q_{h}^{\gamma} z_{h} < M^4},\\
 u(q_{h}^{\gamma}) &\defined \Set{T \in \mathcal{T}_{h} |
  3 q_{h}^{\gamma} z_{h} \leq m^4},\\
 l(q_{h}^{\gamma}) &\defined \Set{T \in \mathcal{T}_{h} |
  M^{4} \leq 3 q_{h}^{\gamma} z_{h}},    
\end{align*}
and
\begin{align*}
 a(y_{h}^{\gamma}) \defined \Set{T \in \mathcal{T}_{h} |
  y_{h}^{\gamma} + \tau \leq 0}.
\end{align*}
The generalized derivatives of~$k_1$ and~$k_2^{\gamma}$ are given by the diagonal matrices
\begin{align*}
 Dk_1(\mathbf{x}) &\defined \operatorname{diag}
  \left( Dk_1(q_{h},T_{j}) \right)_{j = 1,\ldots,n},\\
 Dk_2^{\gamma}(\mathbf{x}) &\defined \operatorname{diag}
  \left( Dk_2^{\gamma}(y_{h},T_{j}) \right)_{j = 1,\ldots,n},
\end{align*}
where
\begin{align}
 Dk_1(q_{h},T) &\defined
 \begin{cases} \label{eq:newton_Dk}
  -\frac{9}{4} z_{h|T}^2 \abs{T} (3\,q_{h|T}\,z_{h|T})^{-7/4}, &
   T \in i(q_{h}),\\
  0,                                                           & \text{else},
 \end{cases} \\
 Dk_2^{\gamma}(y_{h},T) &\defined
 \begin{cases} \nonumber
  \gamma \abs{T}, & T \in a(y_{h}),\\
  0,              & \text{else}.
 \end{cases}
\end{align}
% -----------------------------------------------------------------------------

\begin{algorithm}[t]
 \caption{~Sketch of the path-following method for solving problem~$\dualProblem$.}
 \label{alg:path-following}
 % Note: all norm delimiters manually set to equal heights:
 \begin{algorithmic}[1] 
  \STATE $h_0,\gamma_0 \gets$ positive initial grid size and regularization parameter
  \STATE $\mathbf{x}_{0} \gets$ given initial state vector of appropriate dimension,
         with subvectors~$y_0, q_0$
  \STATE $n \gets 0$
  \LOOP
   \STATE $z_{n} \gets$ (scalar component of) $G_{h_{n}}(f)$
   \WHILE{$\norm[\big]{F^{\gamma_{n}}(\mathbf{x}_{n})} > \text{given tolerance}$}
    \STATE $\mathbf{x}_{n} \gets \mathbf{x}_{n} - DF^{\gamma_{n}}(\mathbf{x}_{n})^{-1}
           F^{\gamma_{n}}(\mathbf{x}_{n})$
   \ENDWHILE
   \STATE $l_{n} \gets \left( P_{\left[m^{4},M^{4}\right]} \left( 3\,q_{n}\,z_{n}
          \right) \right)^{-3/4}$
   \IF{$J^{\gamma_{n}}(y_{n},l_{n}) =
         \norm[\big]{l_{n}^{-1/3}}_{L^{1}(\Omega)} + \gamma_{n}/2
         \norm[\big]{(y_{n}+\tau)^{-}}_{L^{2}(\Omega)}^{2}
         \text{ is sufficiently small}$}
    \STATE Stop and return the last iterate
   \ELSE
    \STATE $h_{n+1} \gets h_{n} / 2$
    \STATE $\gamma_{n+1} \gets \gamma_{n} \cdot 2^{\kappa} \quad
           \text{(with~$\kappa$ chosen as suggested by
            Theorems~\ref{thm:error_overall} and~\ref{thm:error_overall_p1})}$
    \STATE $\mathbf{x}_{n+1} \gets$ interpolation of~$\mathbf{x}_{n}$ on the
           refined mesh
    \STATE $n \gets n+1$
   \ENDIF
  \ENDLOOP
 \end{algorithmic}
\end{algorithm}
In the case of piecewise linear elements, let~$N$ be the number of inner nodes
in the triangulation and~$(\phi_j)_{j=1}^N$ a canonical basis of~$Y_h$. 
With~$\mathbf{x}^\gamma = (y^\gamma_h, q^\gamma_h)^T \in \dsR^{2N}$ and an
arbitrary~$\mathbf{x} = (y_h, q_h)^T \in \dsR^{2N}$ we have that
\begin{align}
 \begin{split} \label{eq:newton_k_p1}
 k_1(q^{\gamma}_h) &\defined \left( -\int_\Omega z_h
  \left(P_{[m^4,M^4]}(3\,q^\gamma_h\,z_h) \right)^{-3/4} \phi_j \ud x \right)_{j=1,\ldots,N},\\
 k_2^{\gamma}(y^{\gamma}_h) &\defined \left( -\int_\Omega \gamma\, P_{(-\infty,0]}(y^\gamma_h+\tau) 
  \, \phi_j \ud x \right)_{j=1,\ldots,N}.
 \end{split}
\end{align}
Denoting by~$g_{1} \in \partial P_{[m^4,M^4]}$ and~$g_{2} \in \partial P_{(-\infty,0]}$
subgradients of the projection operators, we
further obtain
\begin{align}
 \begin{split} \label{eq:newton_Dk_p1}
 Dk_1(\mathbf{x})_{j,k} &\defined
  \int_\Omega 
  \frac{9}{4} z_h^2
  \left( P_{[m^4,M^4]}(3\,q_h\,z_h) \right)^{-7/4}
  g_{1}(3\,q_h\,z_h) \, \phi_k \, \phi_j \ud x,\\
 Dk_2^{\gamma}(\mathbf{x})_{j,k} &\defined -\int_\Omega
  \gamma\, g_{2}(y_h+\tau) \, \phi_k \, \phi_j \ud x.
 \end{split}
\end{align}
% -----------------------------------------------------------------------------

The nonlinear equation~\eqref{eq:Dgammah_matrix_form} can now be solved with
a semismooth Newton method. For this purpose, and an initial
iterate~$\mathbf{x}_{0}$, we generate a sequence of semismooth Newton steps via
\begin{align*}
 \mathbf{x}_{n+1} \defined \mathbf{x}_{n} - DF^{\gamma}(\mathbf{x}_{n})^{-1}
  F^{\gamma}(\mathbf{x}_{n}),
 \qquad n\in \dsN,
\end{align*}
which, very similar to Proposition~4.1 of~\cite{GuentherTber2009}, can be
shown to be well-defined and locally convergent.

In order to approximate the solution of~$\dualProblem$ one can perform
a path-following method, as outlined in Algorithm~\ref{alg:path-following}.
To this end, the above iteration can be continued until
the Euclidean norm of the vector~$F^{\gamma}(\mathbf{x}_{n})$, which
measures how much~$\mathbf{x}_{n}$ violates the optimality system, is below a
given tolerance, and use the obtained
approximate solution to~$\regProblemMY$ as the starting iterate on a refined mesh
with increased~$\gamma$.
This nested iteration approach helps to stay within the convergence radius of
the Moreau-Yosida-penalized Newton method. Overall, in our computations we
observed a marked sensitivity of the Newton-type method with respect to the
regularization parameter.
Should the Newton solver diverge, however, we were able to return into the
domain of convergence by choosing a more conservative increase in~$\gamma$.
As mentioned before, the associated controls can then be extracted from the state
variables~$\mathbf{x}_{n}$ by means of the projection formula~\eqref{os_my_h:proj}.

% --- bib ---------------------------------------------------------------------

\end{document}